\newcommand{\be}{\begin{equation}}
\newcommand{\ee}{\end{equation}}
\newcommand{\bes}{\begin{equation*}}
\newcommand{\ees}{\end{equation*}}
\newcommand{\beqn}{\begin{eqnarray}}
\newcommand{\eeqn}{\end{eqnarray}}
\newcommand{\beqns}{\begin{eqnarray*}}
\newcommand{\eeqns}{\end{eqnarray*}}
\newcommand{\iter}{\mathrm{iter}}
\newcommand{\diag}{\mbox{diag}}
\newcommand{\tr}{\mbox{tr}}
\newtheorem{thm}{Theorem}
\newtheorem{lem}{Lemma}
\newtheorem{cor}{Corollary}
\newcommand{\bL}{\mathbf{L}}
\newcommand{\rank}{\mathrm{rank}}
\newcommand{\bDelta}{\mathbf{\Delta}}
\newcommand{\di}{{\,\mathrm{d}}}
\newcommand{\bu}{\mathbf{u}}
\newcommand{\bv}{\mathbf{v}}
\newcommand{\by}{\mathbf{y}}
\newcommand{\bA}{\mathbf{A}}
\newcommand{\bE}{\mathbf{E}}
\newcommand{\bH}{\mathbf{H}}
\newcommand{\bI}{\mathbf{I}}
\newcommand{\bR}{\mathbf{R}}
\newcommand{\bS}{\mathbf{S}}
\newcommand{\bU}{\mathbf{U}}
\newcommand{\bV}{\mathbf{V}}
\newcommand{\bX}{\mathbf{X}}
\newcommand{\bY}{\mathbf{Y}}
\newcommand{\bZ}{\mathbf{Z}}
\newcommand{\calF}{{\mathcal{F}}}
\newcommand{\calJ}{{\mathcal{J}}}
\newcommand{\calM}{{\mathcal M}}
\newcommand{\calP}{{\mathcal{P}}}
\newcommand{\calT}{{\cal T}}
\definecolor{mycolor1}{rgb}{0.1, 0.5, 0.7}
\long\def\ignore#1{}
\newcommand{\reals}{\mathbb{R}}
\def\calT{\mathcal{T}}
\def\reals{\mathbb{R}}
\def\be{\boldsymbol{e}}
\def\bE{\boldsymbol{E}}
\def\bu{\boldsymbol{u}}
\def\bS{\boldsymbol{S}}
\def\b0{\mathbf{0}}
\def\bSigma{\boldsymbol\Sigma}
\def\bDelta{\boldsymbol\Delta}
\def\bU{\boldsymbol{U}}
\def\bR{\boldsymbol{R}}
\def\bv{\boldsymbol{v}}
\def\bA{\boldsymbol{A}}
\def\bY{\boldsymbol{Y}}
\def\bH{\boldsymbol{H}}
\def\bDelta{\boldsymbol{\Delta}}
\def\calM{\mathcal{M}}
\def\bI{\mathbf{I}}
\def\tr{\mathrm{tr}}
\def\Sp{\mathrm{Sp}}
\def\calJ{\mathcal{J}}
\def\calF{\mathcal{F}}
\def\calP{\mathcal{P}}
\def\cA{\mathcal{A}}
\def\Real{\mathbb{R}}
\def\reals{\mathbb{R}}
\def\Pr{\mathcal{P}_r}
\def\Retr{\mathbf{Retr}}
\newcommand{\bT}{\boldsymbol{T}}
\newcommand{\grad}{\nabla}
\newcommand{\argmin}{\operatorname*{arg\; min}}
\newcommand{\argmax}{\operatorname*{arg\; max}}
\begin{document}

\title{Projected Gradient Descent Algorithm for Low-Rank Matrix Estimation}

\author{\name Teng Zhang \email teng.zhang@ucf.edu \\
       \addr Department of Mathematics\\
       University of Central Florida\\
       Orlando, FL 32826, USA
       \AND
       \name Xing Fan \email xing.fan@ucf.edu \\
       \addr Department of Mathematics\\
       University of Central Florida\\
       Orlando, FL 32826, USA}

\editor{}

\maketitle


\begin{abstract}%
Most existing methodologies of estimating low-rank matrices rely on Burer-Monteiro factorization, but these approaches can suffer from slow convergence, especially when dealing with solutions characterized by a large condition number, defined by the ratio of the largest to the $r$-th singular values, where $r$ is the search rank. While  methods such as Scaled Gradient Descent have been proposed to address this issue, such methods are more complicated and sometimes have weaker theoretical guarantees, for example, in the rank-deficient setting. In contrast, this paper demonstrates the effectiveness of the  projected gradient descent algorithm. Firstly, its local convergence rate is independent of the condition number. Secondly, under conditions where the objective function is rank-$2r$ restricted $L$-smooth and $\mu$-strongly convex, with $L/\mu < 3$, projected gradient descent with appropriate step size converges linearly to the solution. Moreover, a perturbed version of this algorithm effectively navigates away from saddle points, converging to an approximate solution or a second-order local minimizer across a wide range of step sizes. Furthermore, we establish that there are no spurious local minimizers in  estimating asymmetric low-rank matrices when the objective function satisfies $L/\mu<3.$
\end{abstract}

\begin{keywords}
{low-rank matrix estimation}, {projected gradient descent}, {ill-conditioned matrix recovery}, {nonconvex optimization}\end{keywords}


\section{Introduction}
\label{sec:introduction}

Low-rank matrix estimation plays a critical role in fields such as machine learning, signal processing, imaging science, and many others. This paper addresses the fundamental problem of low-rank matrix estimation:
\begin{equation}\label{eq:problem}
\bX_*=\argmin_{\bX\in\reals^{n\times n}:\, \rank(\bX)=r}f(\bX),
\end{equation}
where the search rank $r$ is less than the matrix dimension $n$. In practical scenarios, the matrix size $n$ tends to be large. Consequently, contemporary approaches often adopt a nonconvex strategy pioneered by~\citet{Burer:2003ve}, which is to factorize an $n\times n$ candidate matrix $\bX$ into its factor matrices and to directly optimize over the factors using a local optimization algorithm. Numerous studies have demonstrated the efficacy of this method and established its theoretical guarantees \citep{Curtis2016,zheng2016convergence,Boumal2018,zhang2022improved,pmlr-v54-park17a,10.5555/3157382.3157407}.

However, this method comes with certain limitations. Defining the ``effective condition number'' of $\bX$ by 
\[
\kappa(\bX)=\frac{\sigma_1(\bX)}{\sigma_r(\bX)},
\] 
then one such limitation is evident when the effective condition number of $\bX_*$ is large. While this method converges linearly, in order to achieve a certain accuracy, the number of iterations must increase linearly as the effective condition number $\kappa(\bX_*)$. This phenomenon is observed in numerous works \citep{NIPS2015_32bb90e8,10.5555/3045390.3045493}, summarized in \cite[Table 1]{10.5555/3546258.3546408},  and studied theoretically in \cite[Theorem 3.3]{NIPS2016_b139e104}, which establishes a global convergence rate that depends on $1/\sigma_r(\bX_*)$.  In the specific scenario where $\kappa(\bX_*)=\infty$, indicating a rank-deficient case with $\rank(\bX_*)<r$, \cite{DBLP:journals/jmlr/ZhangFZ23} demonstrate that the gradient descent algorithm exhibits a slow and sublinear convergence rate.

To improve convergence rates in scenarios characterized by large $\kappa(\bX_*)$, \cite{10.5555/3546258.3546408} introduced a novel approach: the scaled gradient descent (ScaledGD) algorithm. This method employs a preconditioned and diagonally-scaled gradient descent scheme, enabling linear convergence rates that are unaffected by $\kappa(\bX_*)$. However, \cite{DBLP:journals/jmlr/ZhangFZ23} caution that it may not fare well in rank-deficient settings. To address this limitation,  \cite{DBLP:journals/jmlr/ZhangFZ23} proposed PrecGD, an extension of ScaledGD incorporating additional regularization within the preconditioner. Their research illustrates that within a localized vicinity surrounding the ground truth, PrecGD exhibits linear convergence towards the true solution, irrespective of $\kappa(\bX_*)$. Importantly, PrecGD also exhibits linear convergence in rank-deficient scenarios. However, it is worth noting that PrecGD demands careful selection of regularization parameters, both in theoretical considerations and practical implementation. \cite{ma2023provably} also addressed the issue by introducing a regularization parameter to ScaledGD, and showed that it converges at a constant linear rate independent of the condition number, but it requires a small initialization.


In this study, we focus on the projected gradient descent (ProjGD) algorithm, also known as the SVP algorithm in some previous literature \citep{NIPS2010_08d98638,Zhang2021GeneralLM}. Unlike ScaledGD or PrecGD, this algorithm stands out for its simplicity, devoid of the need for any regularization or preconditioner, and works well in practice. 



\subsection{Main Results}
The primary contribution of this research lies in demonstrating the efficacy of the projected gradient descent algorithm, showcasing its robust performance irrespective of the condition number $\kappa(\bX_*)$. 

Specifically, our key contributions can be outlined as follows: 
\begin{itemize}
\item Firstly, we investigate the local convergence properties of ProjGD, revealing its ability to converge linearly at a rate independent of the condition number $\kappa(\bX_*)$. Our analysis yields a convergence rate that improves over existing works.

\item Secondly, we explore the global convergence behavior of ProjGD. We establish that under conditions where the function is $2r$-restricted $L$-smooth and $\mu$-strongly convex, and $L/\mu < 3$, when applied with an appropriate step size, ProjGD converges linearly to the solution, with a rate remains unaffected by $\kappa(\bX_*)$. Compared to existing works, our result expands the allowable range for $L/\mu$ and step size.

\item Finally, we introduce a perturbed variant of ProjGD, demonstrating its ability to converge to an approximate critical point of $f$ in $\reals^{n\times n}$ or a second-order local minimizer of $f$ on the manifold of low-rank matrices, under relaxed assumptions regarding step sizes. Our definition of second-order local minimizers applies directly to the low-rank matrices themselves, rather than their factorizations  \citep{DBLP:journals/jmlr/ZhangFZ23}. This distinction strengthens our result.

\item Additionally, this paper establishes the absence of spurious local minimizers when $L/\mu<3$, which extends the findings of \cite[Corollary 1.2]{zhang2022improved} to the asymmetric matrix setting, employing a distinct proof strategy. 
\end{itemize}



\subsection{Related literature}

While the low-rank matrix estimation problem has garnered significant attention in the literature, the related literature to this work can be categorized into five distinct areas: landscape of low-rank matrix estimation, projected gradient algorithms, ill-conditioned estimation, rank-deficient setting, and saddle point avoidance.

\textbf{No spurious local minimizer in low-rank matrix estimation} 
The matrix sensing problem has been extensively investigated in the literature, particularly regarding the absence of spurious local minima under certain conditions  under a restricted isometry property (RIP) condition  \citep{pmlr-v70-ge17a,NIPS2016_b139e104,JMLR:v20:19-020}. Notably, \cite{NIPS2016_b139e104} focused on the positive definite setting, demonstrating the convergence of noisy gradient descent to a global optimum under conditions such as $(2r, \frac{1}{10})$-RIP for the noisy case and $(2r, \frac{1}{5})$-RIP for the clean case. Similarly, \cite{pmlr-v54-park17a} addressed the asymmetric matrix sensing problem under $(4r, 0.0363)$-RIP for the clean case and $(4r, 0.02)$-RIP for the noisy case. \cite{pmlr-v70-ge17a} provided insights into the noiseless case, demonstrating that under the $(2r, \frac{1}{10})$-RIP condition, spurious local minima are absent in the positive semidefinite (PSD) setting; and under $(2r, \frac{1}{20})$-RIP, the absence of local minima is ensured in the asymmetric setting.  \cite{NEURIPS2018_f8da71e5,JMLR:v20:19-020,Zhang2021GeneralLM} contributed further by showing that $(2r,1/2)$-RIP guarantees the absence of spurious solutions, and demonstrated through a counterexample that the non-existence of spurious second-order critical points may not hold if it does not hold. \cite{9483256} investigated sparse operators with low-dimensional representations, establishing necessary and sufficient conditions for the absence of spurious solutions under coherence and assumed structure. They highlighted that combining sparsity and structure can render the coherence assumption almost redundant. Several other works have also made significant contributions to the study of this phenomenon \citep{doi:10.1137/18M1231675,pmlr-v130-bi21a}.

Researchers have also investigated the absence of spurious local minima for generic matrix functions beyond the matrix sensing problem. \cite{Zhu2018} examined the estimation of asymmetric setting and established that spurious local minima are nonexistent when the condition number of the objective function $\kappa_f=L/\mu$ is less than or equal to $1.5$. In a related vein,  \cite{zhang2022improved} explored the symmetric setting and demonstrated that the presence of spurious local minima depends on the relationship between the search rank $r$ and the true rank $r_*$, along with the condition number of the objective function $\kappa_f$. Specifically, Zhang proved that spurious local minimizers are absent if $r$ exceeds the true rank $r_*$ by a factor of $\frac{1}{4}(\kappa_f-1)^2r_*$, while counterexamples exist if $r<\frac{1}{4}(\kappa_f-1)^2r_*-1$. Notably, without rank overparameterization, the absence of spurious minimizers holds if and only if $\kappa_f< 3$.

 \textbf{Projected gradient descent (ProjGD) in low-rank matrix estimation} Several studies have investigated the application of projected gradient descent in low-rank matrix estimation. For instance,  \cite{doi:10.1137/17M1141394} utilized this technique for spectral compressed sensing, while \cite{chen2015fast} applied it to various tasks such as matrix regression, rank-r PCA with row sparsity, and low-rank and sparse matrix decomposition. However, both works require the step size to decrease to zero as the condition number $\kappa(\bX_*)$ increases to infinity, rendering them inapplicable to rank-deficient cases where $r_*<r$.

In contrast, several other studies investigated the projected gradient descent in low-rank matrix estimation, demonstrating its convergence to minimizers under certain assumptions and with specific choices of step sizes \citep{NIPS2010_08d98638,Zhang2021GeneralLM,doi:10.1137/18M1231675}. Notably, these analyses highlight that the convergence rate remains independent of $\kappa(\bX_*)$.

 \textbf{Factored gradient descent (FGD) in ill-conditioned low-rank matrix estimation}
\cite{zhuo2021computational} provided a thorough analysis of over-parameterized low-rank matrix sensing using the FGD method. They highlighted that when $r>r_*$, existing statistical analyses fall short due to the flat local curvature of the loss function around the global maxima. In such cases, convergence initially follows a linear path to a certain error before transitioning to a sub-linear trajectory towards the statistical error.

Similarly, \cite{10.5555/3546258.3546408} demonstrated that even when $r=r_*$, the convergence rate of FGD is linearly dependent on the condition number $\kappa(\bX_*)$. To address this, they proposed a preconditioned gradient descent approach called ScaledGD, which converges linearly at a rate independent of the condition number of the low-rank matrix, akin to alternating minimization. ScaledGD employs a preconditioner of the form $(\bX^T\bX)^{-1}$. In a related vein, \cite{DBLP:journals/jmlr/ZhangFZ23} proposed PrecGD by regularizing the preconditioner as $(\bX^T\bX+\eta\bI)^{-1}$ and demonstrated its linear convergence rate in the overparameterized case when initialized within a neighborhood around the ground truth, even in cases of degenerate or ill-conditioned settings. A perturbed version of PrecGD also achieves global convergence from any initial point. Additionally, a stochastic version is proposed in \citep{zhang2022accelerating}.

Moreover, there is ongoing research exploring the relationship between factorization methods and rank-constrained techniques. \cite{doi:10.1137/18M1231675} demonstrated that all second-order stationary points of the factorized objective function correspond to fixed points of projected gradient descent applied to the original problem, where the projection step enforces the rank constraint. This finding enables the unification of optimization guarantees established in either the rank-constrained or factorized setting. Similarly, \cite{luo2022nonconvex} explored the equivalence between these two methods.

\textbf{Rank-deficient low-rank matrix estimation}
In practical scenarios, the rank of the ground truth $r$ is often unknown. To address this uncertainty, it is common practice to conservatively select the search rank $r$ such that $r> r_*:=\rank(\bX_*)$. This approach entails overparameterizing the model, allocating more degrees of freedom than exist in the ground truth. For instance, in safety-critical applications where ensuring proof of quality is paramount, rank overparameterization ($r>r_*$) is coupled with trust region methods \citep{rosen2019se,rosen2020certifiably,Boumal2018}, albeit at a higher computational cost.

Such overparameterization/rank-deficient regime has been shown to perform well theoretically. \cite{pmlr-v75-li18a} investigated gradient descent in the rank-deficient regime of noisy matrix sensing. They demonstrated that with a sufficiently good initialization, early termination of gradient descent yields a satisfactory solution, owing to implicit regularization effects.  \cite{Stger2021SmallRI} provided insights into rank-deficient low-rank matrix sensing, revealing that the trajectory of gradient descent iterations from small random initialization can be roughly decomposed into three distinct phases--spectral/alignment, saddle avoidance/refinement, and local refinement. Similarly, \cite{JMLR:v24:22-0233} investigated robust matrix recovery and demonstrate that overestimation of the rank has no impact on the performance of the subgradient method, provided that the initial point is sufficiently close to the origin. In addition, \cite{zhang2021sharp} studied the landscape of general rank-deficient regime with $r_*\leq r$, establishing that $(\delta, r+r_*)$-RIP with $\delta<1/(1 + \sqrt{\frac{r_*}{r}})$ is sufficient for the absence of spurious local minima. 

However, \cite{zhuo2021computational} highlighted a limitation of the rank-deficient regime by demonstrating that while factored gradient descent converges unconditionally to a good solution, it does so at a sublinear rate.

It is worth noting an alternative method of overparameterization, as proposed in \citep{DBLP:conf/icml/MaMLS23}, which relies on the lifting technique and the Burer-Monteiro factorization, distinct from the approach of setting $r>r_*$.

\textbf{Escaping saddle points}
Given the nonconvex nature of the low-rank estimation problem, there is a compelling demand to explore saddle point-avoiding algorithms. Existing analyses of such algorithms often hinge on the so-called ``strict saddle property'', ensuring that all local minimizers are close to the global minimizers, and for any saddle point of the objective function, its Hessian features a significant negative eigenvalue \citep{Jin2017HowTE,pmlr-v80-daneshmand18a,DBLP:journals/jmlr/ZhangFZ23}. Indeed, as demonstrated in \cite[Theorem 3]{pmlr-v70-ge17a}, the low-rank matrix sensing problem enjoys the strict saddle property when the RIP condition is met.

\cite{pmlr-v49-lee16} showcased that for generic problems, gradient descent almost surely converges to a local minimizer with random initialization. Furthermore, gradient descent algorithms can be tailored to steer clear of saddle points and converge solely to minimizers. Perturbed gradient descent, for instance, is deployed to evade saddle points with high probability \citep{Jin2017HowTE,10.1145/3418526}, with extensions to linear constrained optimization \citep{NEURIPS2020_1da546f2}, nonsmooth optimization \citep{doi:10.1137/21M1430868,10.1007/s10208-021-09516-w,DBLP:journals/corr/abs-2102-02837}, bilevel optimization \citep{huang2023efficiently}, and manifold optimization \citep{10.5555/3454287.3454941,NEURIPS2019_125b93c9}. Additionally, stochastic gradient descent (SGD) is renowned for its capability to circumvent saddle or spurious local minima \citep{pmlr-v80-daneshmand18a}. Beyond perturbations, a class of saddle-point avoiding algorithms utilizes second-order information \citep{Nesterov2006,Curtis2016,10.1145/3055399.3055464,doi:10.1137/17M1114296} to steer away from saddle points and toward minimizers.

\subsection{Organization}
The paper is structured as follows: Section~\ref{sec:background} provides an overview of the problem setting and introduces the projected gradient descent algorithm. In Section~\ref{sec:main}, we present the main results of the paper, with key findings outlined in each subsection. Section~\ref{sec:simu} offers insights from numerical experiments. For technical proofs and supplementary experiments, refer to Section~\ref{sec:proof}.

\section{Background}\label{sec:background}
\subsection{Notation}
For any $\bX\in\reals^{n_1\times n_2}$ with $\rank(\bX)=r_0$, we denote its singular values decomposition with $\bX=\bU_{\bX}\bSigma_{\bX}\bV^T_{\bX}$, where $\bU_{\bX}\in\reals^{n_1\times r_0}, \bSigma_{\bX}\in\reals^{r_0\times r_0},$ and $\bV_{\bX}\in\reals^{n_2\times r_0}$. In particular, the singular values are $\sigma_1(\bX)\geq \sigma_2(\bX)\geq \cdots\geq \sigma_{r_0}(\bX)$ with the corresponding left and right singular vectors $\{\bu_i(\bX)\}_{i=1}^{r_0}$ and $\{\bv_i(\bX)\}_{i=1}^{r_0}$.

If $\rank(\bX)>r$, its rank-$r$ approximation is given by $\Pr(\bX)=\bU_{\bX,r}\bSigma_{\bX,r}\bV^T_{\bX,r}$, where $\bU_{\bX,r}\in\reals^{n_1\times r}=[\bu_1(\bX),\cdots,\bu_r(\bX)], \bSigma_{\bX,r}\in\reals^{r\times r}=\diag(\sigma_1(\bX), \cdots, \sigma_r(\bX)),$ and $\bV_{\bX,r}\in\reals^{n_2\times r}=[\bv_1(\bX),\cdots,\bv_r(\bX)]$.

For any orthogonal matrices $\bU\in\reals^{n\times r}$, we use $\bU^{\perp}$ to represent an orthogonal matrix in $\reals^{n\times n-r}$ such that $[\bU,\bU^\perp]^T[\bU,\bU^\perp]=\bI$, that is, the columns of $\bU$ and $\bU^\perp$ is an orthonormal basis of $\reals^n$. In addition, we let $[\bX]_{\bU,\bV}=\bU\bU^T\bX\bV\bV^T$ be the projection of the columns of $\bX$ to $\Sp(\bU)$ and the projection of the rows of $\bX$ to $\Sp(\bV)$.

In addition, for any matrix $\bX$ with rank $r$, we use $T(\bX)$ to represent its tangent space at the manifold of all matrices of rank $r$: 
\[
T(\bX)=\{\bZ_1\bX+\bX\bZ_2: \bZ_1\in\reals^{n\times n}, \bZ_2\in\reals^{n\times n}\}.
\]
$P_{T(\bX)}: \reals^{n\times n}\rightarrow \reals^{n\times n}$ represents the projector to the subspace $T(\bX)$, and we have the following expression:
\[P_{\bT(\bX)}(\bZ)=[\bZ]_{\bU_{\bX},\bV_{\bX}}+[\bZ]_{\bU_{\bX}^\perp,\bV_{\bX}}+[\bZ]_{\bU_{\bX},\bV_{\bX}^\perp}=\bZ-[\bZ]_{\bU_{\bX}^\perp,\bV_{\bX}^\perp}.\] 
We let ${\bT(\bX)^\perp}$ be the subspace perpendicular to $\bT(\bX)$ and the projector to ${\bT(\bX)^\perp}$ can be written by
\[
P_{\bT(\bX)^\perp}\bZ=[\bZ]_{\bU_{\bX}^\perp,\bV_{\bX}^\perp}.
\]

\subsection{Projected and factored gradient descent algorithms}
\textbf{Projected gradient descent (ProjGD)  algorithm} The projected gradient descent algorithm (ProjGD) treats \eqref{eq:problem} as a constrained optimization problem. It iteratively performs a gradient step and then projects the result to satisfy the constraints:
\begin{equation}\label{eq:algorithm}
\bX^{(t+1)}=\Pr\big(\bX^{(t)}-\eta\nabla f(\bX^{(t)})\big),
\end{equation}
where $\Pr$ represents the projection to the nearest rank-$r$ matrix, computable using singular value decomposition.

It is worth noting that the projected gradient descent algorithm can be extended to the scenario where $\bX$ is assumed to be symmetric and positive semidefinite (PSD)~\citep{pmlr-v70-ge17a}., by employing \[\Pr(\bX)=\sum_{i=1}^r\max(\lambda_i(\bX),0)\bu_i(\bX)\bu_i(\bX)^T.\] In this paper, we focus on the asymmetric setting for theoretical analysis while investigating both settings in simulations.\\
\textbf{Factored gradient descent (FGD) algorithm} Many state-of-the-art algorithms adopt a nonconvex approach pioneered by \cite{Burer:2003ve}. This method involves factorizing an $n\times n$ candidate matrix $\bX$ into its factor matrices $\bX=\bL\bR^T$, where $\bL,\bR\in\reals^{n\times r}$, and directly optimizing over these factors using local optimization algorithms \citep{Curtis2016,zheng2016convergence,Boumal2018,zhang2022improved,pmlr-v54-park17a,10.5555/3157382.3157407}. Specifically, the standard Factored gradient descent (FGD) algorithm operates as follows:
\begin{align}
\bU^{(t+1)}=&\big(\bX^{(t)}-\eta\nabla f(\bX^{(t)})\big)\bV^{(t)}\label{eq:algorithm_factor1},\,\, \bV^{(t+1)}=\big(\bX^{(t)}-\eta\nabla f(\bX^{(t)})\big)\bU^{(t)}, \,\,\bX^{(t+1)}=\bU^{(t+1)}\bV^{(t+1)\,T}.
\end{align}

\textbf{Computational cost per iteration of projected and factored gradient descent} 
While both algorithms require the computation of $\nabla f(\bX)$, ProjGD involves an additional step of rank-$r$ projection $\Pr$, whereas FGD incurs additional computational cost due to the multiplication steps over its factors. Despite these differences, both algorithms have a computational cost of $O(n^2r)$. Specifically, the multiplication between two $n\times r$ matrices in \eqref{eq:algorithm_factor1}, as well as the multiplication between a matrix of size $n\times n$ and a matrix of size $n\times r$ in \eqref{eq:algorithm_factor1}, also have a computational cost of $O(n^2r)$. Consequently, both algorithms have the same order of computational costs per iteration.

\section{Main Results}\label{sec:main}

In this section, we outline our main results. The first key finding, presented in Theorem~\ref{thm:local}, demonstrates that ProjGD converges locally at a rate independent of $\kappa(\bX_*)$. The second significant result, detailed in Theorem~\ref{thm:main1}, establishes that if the function is rank-$2r$ restricted $L$-smooth and $\mu$-strongly convex, with $L/\mu < 3$, then the projected gradient descent algorithm converges linearly to the solution with an appropriate choice of step size. Lastly, Theorem~\ref{thm:perturb} illustrates that PprojGD, a perturbed version of ProjGD, converges to an approximate second-order local minimizer in the matrix of low-rank matrices, or an approximate stationary point in $\reals^{n\times n}$. This implies that PprojGD converges to an approximate solution if $L/\mu < 3$, with a broader range of step sizes to choose from. In addition, Corollary~\ref{thm:global} proves that there is no spurious local minimizer when estimating asymmetric low-rank matrices under the condition $L/\mu<3$.


\subsection{Local convergence of ProjGD}\label{sec:local}
This section establishes the local convergence property of ProjGD \eqref{eq:algorithm}, subject to Assumptions A1-A2 on $f$. These assumptions are standard and have been widely employed in works such as \citep{10.5555/3546258.3546408} and \citep{DBLP:journals/jmlr/ZhangFZ23}. As discussed in \cite[Section 2.5]{10.5555/3546258.3546408}, numerous optimization problems satisfy Assumptions A1-A2, including low-rank matrix factorization, matrix completion, and low-rank matrix sensing.

\textbf{Assumption A1} [Rank-$2r$ restricted smoothness and strong convexity]:
The function $f$ satisfies the following conditions:
\begin{equation}\label{eq:assumption1}
\|\grad f(\bX)-\grad f(\bX')\|_F\leq L\|\bX-\bX'\|_F
\end{equation}
and
\begin{equation}\label{eq:assumption2}
\langle \grad^2 f(\bX)[\bE],\bE\rangle\geq \mu \|\bE\|_F^2
\end{equation}
for any $\bX,\bX',\bE\in\reals^{n\times n}$ with ranks no more than $r$.

\textbf{Assumption A2} [Unconstrained minimizer is low-rank] The minimizer of function $f$
\begin{equation}\label{eq:assumptiona2}
\bX_*=\arg\min_{\bX\in\reals^{n\times n}} f(\bX)
\end{equation}
satisfies that $\rank(\bX_*)=r_*\leq r$.

 Our result, Theorem~\ref{thm:local}, demonstrates that the ProjGD algorithm converges linearly when well-initialized, with the convergence rate depending on the step size $\eta$, $L$, and $\mu$, but being independent of the effective condition number of the solution $\kappa(\bX_*)$. \begin{thm}\label{thm:local}[Local convergence rate]
Under Assumptions A1-A2, there exists $c_0>0$ such that for any initialization $\bX^{(0)}$ satisfying $f(\bX^{(0)})-f(\bX_*)\leq 0.01\sigma_{r_*}(\bX_*)^2\mu/\kappa_f$, where $\kappa_f=L/\mu$, then ProjGD with a step size $\eta<1/2L$ converges linearly to $\bX_*$, and the iterates of ProjGD satisfy the following condition:
\[
\frac{f(\bX^{(k+1)})-f(\bX_*)}{f(\bX^{(k)})-f(\bX_*)}\leq 1-\frac{4}{27\kappa_f}\big({\eta L}-\eta^2L^2\big). 
\]
\end{thm}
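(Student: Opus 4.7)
The plan is to combine the best-rank-$r$ projection inequality with the rank-$2r$ restricted smoothness and strong convexity. Since $\rank(\bX_*)\leq r$ and $\bX^{(t+1)}$ is the best rank-$r$ approximation of $\bY^{(t)} := \bX^{(t)} - \eta \nabla f(\bX^{(t)})$, one has $\|\bX^{(t+1)} - \bY^{(t)}\|_F^2 \leq \|\bX_* - \bY^{(t)}\|_F^2$. Expanding this and rearranging yields the key bound
\[
\|\bX^{(t+1)} - \bX^{(t)}\|_F^2 + 2\eta\langle \nabla f(\bX^{(t)}),\, \bX^{(t+1)} - \bX_*\rangle \leq \|\bX^{(t)} - \bX_*\|_F^2.
\]
Combining this with the descent inequality $f(\bX^{(t+1)}) - f(\bX^{(t)}) \leq \langle \nabla f(\bX^{(t)}), \bX^{(t+1)} - \bX^{(t)}\rangle + \tfrac{L}{2}\|\bX^{(t+1)} - \bX^{(t)}\|_F^2$ (valid because $\bX^{(t+1)} - \bX^{(t)}$ has rank at most $2r$) and with strong convexity around $\bX_*$ (where $\nabla f(\bX_*) = 0$ by Assumption A2) yields the one-step inequality
\[
f(\bX^{(t+1)}) - f(\bX_*) \leq \tfrac{1}{2}\bigl(\tfrac{1}{\eta} - \mu\bigr)\|\bX^{(t)} - \bX_*\|_F^2 \;-\; \tfrac{1}{2}\bigl(\tfrac{1}{\eta} - L\bigr)\|\bX^{(t+1)} - \bX^{(t)}\|_F^2.
\]

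Since $\eta < 1/(2L)$ the second coefficient is strictly positive, so the corresponding term furnishes a genuine negative contribution. A naive bound that drops this term and invokes only $\|\bX^{(t)} - \bX_*\|_F^2 \leq \tfrac{2}{\mu}(f(\bX^{(t)}) - f(\bX_*))$ gives the contraction factor $\tfrac{1}{\eta\mu} - 1$, which is not less than one when $\eta < 1/(2L)$. To convert the negative term into a useful lower bound, I would decompose $\nabla f(\bX^{(t)})$ into its tangent and normal parts relative to $T(\bX^{(t)})$: near $\bX_*$ the tangent part dominates and is transported by the rank-$r$ projection into a displacement of comparable magnitude, so $\|\bX^{(t+1)} - \bX^{(t)}\|_F^2$ is controlled from below by $\|P_{T(\bX^{(t)})}\nabla f(\bX^{(t)})\|_F^2$, which is in turn bounded below by a multiple of $f(\bX^{(t)}) - f(\bX_*)$ via strong convexity.

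The main obstacle is making every constant above uniform in $\kappa(\bX_*)$, especially in the rank-deficient regime $r_* < r$ where $\sigma_r(\bX^{(t)})$ can be arbitrarily small. The precise form of the initialization condition $f(\bX^{(0)}) - f(\bX_*) \leq 0.01\,\sigma_{r_*}(\bX_*)^2 \mu / \kappa_f$ is calibrated so that strong convexity forces $\|\bX^{(t)} - \bX_*\|_F \ll \sigma_{r_*}(\bX_*)$. A Weyl-type perturbation bound then guarantees that the $r_*$ dominant singular values and singular subspaces of $\bX^{(t)}$ remain close to those of $\bX_*$, while the trailing $r-r_*$ singular values stay small; this stability is what permits the normal component $P_{T(\bX^{(t)})^\perp}\nabla f(\bX^{(t)})$ to be absorbed as a higher-order error whose coefficient does not depend on $\kappa(\bX_*)$.

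Finally, the explicit prefactor $\tfrac{4}{27\kappa_f}$ strongly suggests an internal optimization: parametrizing the convex combination used to trade off the positive $\|\bX^{(t)} - \bX_*\|_F^2$ term against the negative $\|\bX^{(t+1)} - \bX^{(t)}\|_F^2$ term by a free scalar $\alpha \in (0,1)$ produces a bound of the form $\alpha(1-\alpha)^2 \cdot \eta L (1-\eta L)/\kappa_f$, whose maximum over $\alpha$ equals $\tfrac{4}{27}$ at $\alpha = 1/3$. Optimizing this free parameter gives the stated rate $1 - \tfrac{4}{27\kappa_f}(\eta L - \eta^2 L^2)$, and the argument closes by induction: the contraction ensures $f(\bX^{(t+1)}) - f(\bX_*) < f(\bX^{(t)}) - f(\bX_*)$, so the initialization bound is preserved for every $t$, validating the inductive use of the local analysis and yielding linear convergence for all iterates.
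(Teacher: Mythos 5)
Your intermediate inequality
\[
f(\bX^{(t+1)}) - f(\bX_*) \leq \tfrac{1}{2}\bigl(\tfrac{1}{\eta} - \mu\bigr)\|\bX^{(t)} - \bX_*\|_F^2 - \tfrac{1}{2}\bigl(\tfrac{1}{\eta} - L\bigr)\|\bX^{(t+1)} - \bX^{(t)}\|_F^2
\]
is algebraically correct, but it is the wrong tool and cannot yield the theorem, and the rest of the plan does not repair it. The trouble is that you chose $\bX_*$ as the comparator in the best-rank-$r$ optimality inequality $\|\bX^{(t+1)}-\bY^{(t)}\|_F\leq\|\bX_*-\bY^{(t)}\|_F$, which drags a positive $\frac{1}{2}(\tfrac{1}{\eta}-\mu)\|\bX^{(t)}-\bX_*\|_F^2$ term into the bound whose coefficient blows up as $\eta\to 0$. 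Since $f(\bX^{(t)})-f(\bX_*)\leq\frac{L}{2}\|\bX^{(t)}-\bX_*\|_F^2$ and $\eta<1/(2L)$, this term alone is at least $\frac{2L-\mu}{L}\bigl(f(\bX^{(t)})-f(\bX_*)\bigr)\geq \tfrac{3}{2}\bigl(f(\bX^{(t)})-f(\bX_*)\bigr)$, so you must subtract off at least $\tfrac{1}{2}\bigl(f(\bX^{(t)})-f(\bX_*)\bigr)$ from the negative term just to break even, and much more for any meaningful contraction. But the best lower bound on $\|\bX^{(t+1)}-\bX^{(t)}\|_F^2$ (whether through $P_{T}\nabla f$ or directly) is of order $\eta^2\mu\bigl(f(\bX^{(t)})-f(\bX_*)\bigr)$, so the negative term contributes only $O\bigl(\eta\mu(f(\bX^{(t)})-f(\bX_*))\bigr)$, which vanishes as $\eta\to 0$ and cannot close the gap. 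A concrete sanity check: for the quadratic $f(\bX)=\tfrac{1}{2}\|\bX-\bX_*\|_F^2$ (so $L=\mu=1$) with $\bX^{(t)}=\lambda\bX_*$, $\lambda>1$, one has $\|\bX^{(t+1)}-\bX^{(t)}\|_F=\eta\|\bX^{(t)}-\bX_*\|_F$ exactly, yet your inequality only bounds $f(\bX^{(t+1)})-f(\bX_*)$ by $(\tfrac{1}{\eta}-1)(1-\eta^2)\bigl(f(\bX^{(t)})-f(\bX_*)\bigr)$, which is nowhere near a contraction for $\eta<1/2$.

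The paper avoids this trap by using a different comparator in the projection optimality: it compares $\bX^{(t+1)}$ against the current iterate $\bX^{(t)}$ (which is also rank $r$), giving $\langle\eta\nabla f(\bX^{(t)}),\bX^{(t)}-\bX^{(t+1)}\rangle\geq\tfrac{1}{2}\|\bX^{(t)}-\bX^{(t+1)}\|_F^2$ and, after combining with smoothness, the pure descent bound $f(\bX^{(t)})-f(\bX^{(t+1)})\geq\tfrac{1}{2}(\tfrac{1}{\eta}-L)\|\bX^{(t)}-\bX^{(t+1)}\|_F^2$ with no large positive $\|\bX^{(t)}-\bX_*\|_F^2$ term at all. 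From there it lower-bounds $\|\bX^{(t)}-\bX^{(t+1)}\|_F$ by $\tfrac{2}{3}\eta\|P_{T(\bX^{(t)})}\nabla f(\bX^{(t)})\|_F$ (the key geometric lemma) and then by $\sqrt{f(\bX^{(t)})-f(\bX_*)}$ times a $\kappa_f$-dependent constant using the tangent-space geometry near $\bX_*$, which is the part your sketch does gesture at correctly. The $4/27$ constant is a by-product of multiplying the $(2/3)^2$ from the projection lemma by the fixed safety margin enforced by the initialization radius; it is not the result of optimizing a free interpolation parameter, and no such parameter appears in the proof. You would need to replace your opening step with the paper's self-comparison version to have a viable argument.
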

\textbf{Comparison with existing results}
Theorem~\ref{thm:local} highlights that the convergence rate relies on $\kappa_f$ and $\eta L$ and remains unaffected by the effective condition number $\kappa(\bX_*)$. In contrast, for FGD, the number of iterations required to achieve a certain accuracy increases linearly with the effective condition number \citep{10.5555/3546258.3546408}. Moreover, when $r_*<r$, FGD experiences a slowdown to a sublinear local convergence rate, both theoretically and empirically \cite[Section 5]{DBLP:journals/jmlr/ZhangFZ23}. While ScaledGD by \cite{DBLP:journals/jmlr/ZhangFZ23} and PrecGD  by \cite{DBLP:journals/jmlr/ZhangFZ23} also exhibit rates independent of the effective condition number $\kappa(\bX_*)$, the analysis of ScaledGD cannot handle the scenario $r_*<r$, and PrecGD requires a carefully chosen regularization parameter that varies with each iteration. 

 In particular, when we set $\eta=1/3L$ and let $\kappa_f=L/\mu$ being the condition number of the objective function $f$, ProjGD requires $O(\log(1/\epsilon) \cdot \kappa_f)$ iterations to achieve an $\epsilon$-accuracy of $\|\bX^{(k)}-\bX_*\|<\epsilon$. In contrast, FGD has an iteration complexity of $O(\log(1/\epsilon) \cdot \kappa_f \cdot \kappa(\bX_*))$ \citep{pmlr-v54-park17a,pmlr-v49-bhojanapalli16}, which is worse by a factor of $\kappa(\bX_*)$. Additionally, according to \cite[Theorem 4]{DBLP:journals/jmlr/ZhangFZ23}, PrecGD requires at least $O(\log(1/\epsilon) \cdot \kappa_f^2)$ iterations, which is worse by a factor of $\kappa_f$. The only existing work with the same convergence rate is Theorem 4 in \citep{10.5555/3546258.3546408}, demonstrating that ScaledGD shares the same iteration complexity of $O(\log(1/\epsilon) \cdot \kappa_f)$. However, this theorem is only valid when $r_*=r$ and does not apply when $r_*<r$.


\subsection{Global convergence of ProjGD}\label{sec:global}
This section establishes that when $L/\mu < 3$,  ProjGD converges to the unique minimizer $\bX_*$ with an appropriately chosen step size $\eta$. Compared to existing works, our result expands the permissible range of $L/\mu$ and offers greater flexibility in selecting the step size $\eta$.

\begin{thm}\label{thm:main1}
(a) [Global convergence] Under Assumptions A1-A2, and assume in addition that $L/\mu <  3$, then the ProjGD algorithm converges linearly to $\bX_*$ for step sizes in the range \[
\frac{(L^2-\mu^2)}{2L\mu(L+\mu)}< \eta < \frac{1}{L}.
\]
(b) [Global convergence rate] Under the setting in (a), and let $\kappa_0=\frac{L-\mu}{L+\mu}$ and $\epsilon>0$ be chosen such that $\hat{\kappa}_0=\sqrt{\kappa_0^2+2\epsilon-2\epsilon^2}<1/2$, if we choose step size $\eta$ such that for $\eta_0=2\eta/(L+\mu)$, $1/(1/\hat{\kappa}_0-\hat{\kappa}_0)<\eta_0<1/(1+\kappa_0)$, then the iterates of ProjGD satisfy the following condition: 
\[
\frac{f(\bX^{(k+1)})-f(\bX_*)}{f(\bX^{(k)})-f(\bX_*)}\leq 1-\Big(\frac{1}{(1+\kappa_0)\eta_0}- 1\Big)\min\Bigg(\frac{\eta_0}{10},\frac{2\epsilon\eta_0}{3}, \frac{1}{2\sqrt{r}}\Big(\eta_0-\frac{\hat{\kappa}_0}{{1-\hat{\kappa}_0^2}}\Big)\Bigg)^2.
\]
\end{thm}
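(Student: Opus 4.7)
The plan is to track the Frobenius-norm error $\bE^{(k)} := \bX^{(k)} - \bX_*$ across one iteration of ProjGD, establish a contraction $\|\bE^{(k+1)}\|_F \le \rho \|\bE^{(k)}\|_F$ with an explicit $\rho<1$, and then convert it into the claimed function-value rate. The starting point is the defining property of the rank-$r$ projection: since $\bX^{(k+1)}$ is the closest rank-$r$ matrix to $\bY^{(k)} := \bX^{(k)} - \eta\nabla f(\bX^{(k)})$ while $\bX_*$ itself has rank at most $r$ by Assumption A2, one has $\|\bY^{(k)} - \bX^{(k+1)}\|_F \le \|\bY^{(k)} - \bX_*\|_F$; squaring and cancelling common terms gives the fundamental rank-restricted inequality
\begin{equation*}
\|\bE^{(k+1)}\|_F^2 \;\le\; 2\,\langle \bE^{(k+1)},\; \bY^{(k)} - \bX_*\rangle.
\end{equation*}
I would then combine this with the integrated-Hessian identity $\bY^{(k)} - \bX_* = (\bI - \eta H^{(k)})[\bE^{(k)}]$, where $H^{(k)} := \int_0^1 \nabla^2 f(\bX_* + s\bE^{(k)})\,ds$.

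Because both $\bE^{(k)}$ and $\bE^{(k+1)}$ have rank at most $2r$, the rank-$2r$ form of Assumption A1 controls the quadratic form of $H^{(k)}$ on such inputs by eigenvalues in $[\mu,L]$; the canonical linear-operator contraction then yields optimal rate $\kappa_0 = (L-\mu)/(L+\mu)$ at $\eta_0 = 2\eta/(L+\mu) = 1$, and a controlled perturbation of $\eta_0$ around $1$ by an amount $\epsilon$ should produce the refined rate $\hat\kappa_0 = \sqrt{\kappa_0^2+2\epsilon-2\epsilon^2}$ in the statement. The main obstacle is that the factor $2$ in the projection inequality is too loose to push through by plain Cauchy--Schwarz: under $L/\mu<3$ we only have $\kappa_0<1/2$, so a naive chain $\|\bE^{(k+1)}\|_F \le 2\hat\kappa_0\|\bE^{(k)}\|_F$ leaves essentially no margin (and actually fails because $H^{(k)}[\bE^{(k)}]$ need not be rank-$\le 2r$, so the operator-norm bound on $\bI - \eta H^{(k)}$ is not immediate). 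To sharpen it, I would decompose $\bE^{(k+1)} = (\bY^{(k)} - \bX_*) - (\bY^{(k)} - \bX^{(k+1)})$ and carefully analyse the cross-term $\langle \bX_*,\; \bY^{(k)} - \bX^{(k+1)}\rangle$ using that $\bY^{(k)} - \bX^{(k+1)}$ is the SVD tail of $\bY^{(k)}$ (with top singular value at most $\sigma_{r+1}(\bY^{(k)})$) and that the spectral and Frobenius norms of a rank-$r$ matrix differ by at most a factor of $\sqrt r$; I expect this to be the natural source of the $1/\sqrt{r}$ factor appearing in the third term of the minimum in the statement.

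The lower bound $\eta > (L^2-\mu^2)/(2L\mu(L+\mu))$ is then precisely what is needed to ensure $\hat\kappa_0/(1-\hat\kappa_0^2)<\eta_0$, keeping the sharpened contraction constant strictly below $1$, and the three pieces $\eta_0/10$, $2\epsilon\eta_0/3$, and $\tfrac{1}{2\sqrt r}(\eta_0-\hat\kappa_0/(1-\hat\kappa_0^2))$ inside the minimum should correspond to the three regimes in which a universal slack, the $\epsilon$-slack from perturbing $\eta_0=1$, or the $1/\sqrt r$ slack from the rank-$r$ tail is the binding constraint. Once a contraction $\|\bE^{(k+1)}\|_F \le \rho\|\bE^{(k)}\|_F$ with an explicit $\rho<1$ is in hand, part (a) follows for any $\eta$ in the stated range, and part (b) follows by converting to function values via the sandwich $\tfrac{\mu}{2}\|\bE^{(k)}\|_F^2 \le f(\bX^{(k)}) - f(\bX_*) \le \tfrac{L}{2}\|\bE^{(k)}\|_F^2$, which is a direct consequence of the restricted strong convexity/smoothness together with $\nabla f(\bX_*)=0$ (by Assumption A2).
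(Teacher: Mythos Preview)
Your plan follows the classical SVP/Jain--Meka route: use the best-rank-$r$ property to get $\|\bE^{(k+1)}\|_F^2\le 2\langle\bE^{(k+1)},\bY^{(k)}-\bX_*\rangle$, combine with an integrated-Hessian contraction of $\bY^{(k)}-\bX_*$, and deduce $\|\bE^{(k+1)}\|_F\le 2\kappa_0\|\bE^{(k)}\|_F$. That is \emph{not} the argument the paper runs, and your proposed refinement (``analyse the cross term $\langle\bX_*,\bY^{(k)}-\bX^{(k+1)}\rangle$ via the SVD tail'') is not what produces the three terms in the $\min$ of part (b); in particular, your reading of those three terms as ``universal slack / $\epsilon$-perturbation of $\eta_0=1$ / $\sqrt r$ tail'' is wrong.

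The paper never proves an error contraction. Instead it proves a \emph{sufficient decrease} inequality directly on function values: from the descent lemma $f(\bX)-f(\bX^+)\ge\tfrac12(\tfrac1\eta-L)\|\bX-\bX^+\|_F^2$, it remains to lower-bound the movement $\|\bX^+-\bX\|_F$ by a constant times $\|\bX-\bX_*\|_F$. This is done by a case split on the angle between $\bX_*-\bX$ and the tangent space $T(\bX)$. If the tangential part of $\bX_*-\bX$ dominates, then $P_{T(\bX)}\nabla f(\bX)$ is large and a projection-onto-tangent-space lemma gives $\|\bX^+-\bX\|_F\gtrsim\eta\|P_{T(\bX)}\nabla f(\bX)\|_F$; this is the source of the $\eta_0/10$ term. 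If the \emph{orthogonal} part dominates, there are two subcases according to whether $\gamma:=\|P_{T(\bX)}\nabla f(\bX)\|_F$ is above or below $\epsilon\|\bX-\bX_*\|_F$. Large $\gamma$ gives the $2\epsilon\eta_0/3$ term by the same tangent-space mechanism. Small $\gamma$ is the crux: a structural lemma (Lemma~\ref{lemma:orthogonalrate0}, proved by reducing to a block-diagonal form via a von Neumann trace inequality) shows that if the tangential gradient is negligible then necessarily $\|P_{T(\bX)^\perp}\nabla f(\bX)\|\ge \tfrac{1-\hat\kappa_0^2}{\hat\kappa_0}\sigma_r(\bX)$, so for $\eta_0>\hat\kappa_0/(1-\hat\kappa_0^2)$ the gradient step pushes the spectrum enough that $\mathcal P_r$ must move; Weyl-type perturbation plus $\rank(P_{T(\bX)^\perp}\nabla f(\bX))\le r$ then gives the $\tfrac{1}{2\sqrt r}(\eta_0-\hat\kappa_0/(1-\hat\kappa_0^2))$ term.

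The gap in your plan is therefore the missing geometric ingredient: the spectral comparison $\|P_{T(\bX)^\perp}\bZ\|\ge\tfrac{1-\kappa_0^2}{\kappa_0}\sigma_r(\bX)$ at approximate tangent-space stationary points. This is what fixes the lower end $\eta>\tfrac{L^2-\mu^2}{2L\mu(L+\mu)}$ of the step-size window and what makes the argument work up to $L/\mu<3$ for the whole range $\eta<1/L$. Your contraction route, even granting the operator bound $\|(\bI-\eta H)\bE\|_F\le\kappa_0\|\bE\|_F$, requires $\eta_0$ near $1$ (i.e.\ $\eta\approx 2/(L+\mu)$), which lies \emph{outside} the stated window $\eta<1/L$; and the ``cross-term'' refinement you sketch does not recover the tangent-space/eigenvalue mechanism that actually drives part (b).
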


We note that part (b) implies part (a): Since $\epsilon$ in part (b) can be chosen to be arbitrarily small so that $\hat{\kappa}_0$ is close to $\kappa_0$ and smaller than $\eta_0$, Theorem~\ref{thm:main1}(b) implies Theorem~\ref{thm:main1}(a).

\textbf{Comparison with existing results}
Theorem~\ref{thm:main1} can be contrasted with existing works such as \citep{NIPS2010_08d98638,Zhang2021GeneralLM,doi:10.1137/18M1231675}. In particular,  \cite{Zhang2021GeneralLM} extend the results by  \cite{NIPS2010_08d98638}  from matrix sensing to general matrix estimation problems and  demonstrate that under the assumption of symmetry and $L/\mu\leq 2$, the ProjGD algorithm converges linearly to the minimizer with a rate of $O((\frac{2\kappa_0}{1-\kappa_0})^k)$ when the step size is $\eta=1/L$. By adapting the proof of \cite[Theorem 3]{Zhang2021GeneralLM}, it can be shown that linear convergence holds for step sizes $1/2\mu\leq \eta\leq 1/L$. It is worth noting that our theorem, Theorem~\ref{thm:main1}, allows a larger range of $L/\mu< 3$ and  a larger range of step sizes, because $\frac{(L^2-\mu^2)}{2L\mu(L+\mu)}\leq \frac{1}{2\mu}$. Similarly, \cite{doi:10.1137/18M1231675} demonstrated that when $L/\mu\leq 2$ and $1/2\mu\leq \eta\leq 1/L$, $\bX_*$ is the unique stationary point of ProjGD. 

To summarize, Theorem~\ref{thm:main1} improves upon existing results in two key aspects: Firstly, it applies to $L/\mu< 3$ instead of the previous limit of $L/\mu\leq 2$. Secondly, its analysis allows for a wider range of step sizes, with a smaller lower bound. There are also some technical differences: compared to \citep{Zhang2021GeneralLM}, Theorem~\ref{thm:main1} addresses the asymmetric setting of $\bX$ rather than being restricted to PSD matrices. Compared to  \citep{doi:10.1137/18M1231675}, Theorem~\ref{thm:main1} offers additional insights into the rate of convergence.


\subsection{Global convergence of perturbed projected gradient descent (PprojGD)}
The requirement in Theorem~\ref{thm:main1} for an appropriately chosen step size may not be practical where the exact values of $L$ and $\mu$ are unknown. In practice, it is more typical to use a small fixed step size in gradient descent algorithms. However, there could be saddle points that serve as fixed points of ProjGD with small step sizes, as demonstrated in \citep[Section 6]{JMLR:v20:19-020}. This highlights a potential limitation of the theoretical results.

To explain this gap between theory and practice, we propose PprojGD (perturbed ProjGD) in Algorithm~\ref{alg:matrix-reg-log}, drawing inspiration from \cite{Jin2017HowTE} and \cite{NEURIPS2019_7486cef2}. PprojGD is designed to escape saddle points encountered by ProjGD. The key idea behind PprojGD is as follows: if ProjGD fails to induce a significant change in the estimate, indicated by a small Frobenius norm of the difference $\|\bX^{(k+1)}-\bX^{(k)}\|_F$, then the current iteration $\bX^{(k)}$ is considered as an approximate saddle point. In such cases, PprojGD performs a tangent space step instead, which involves multiple perturbed gradient descent steps on the tangent space. 
The term ``tangent space steps'' is derived from \citep{NEURIPS2019_7486cef2} and is summarized in Algorithm~\ref{alg:tangent}.

It is worth noting a distinction between Algorithm~\ref{alg:matrix-reg-log} and the approach in \cite{NEURIPS2019_7486cef2}. While the intuition behind both algorithms is similar, there is a difference in the criterion used to determine when to add perturbations. In Algorithm~\ref{alg:matrix-reg-log}, rather than assessing the magnitude of gradient derivatives, we evaluate the Frobenius norm of the difference between consecutive iterates, $\|\bX^{(k+1)}-\bX^{(k)}\|_F$.



To formally describe Algorithm~\ref{alg:tangent}, let's introduce the concept of a pullback of $f$ from $\mathcal{M}_r$, the manifold of matrices of size $n\times n$ and rank $r$, to its tangent space $T(\mathbf{X})$ at $\mathbf{X}\in\mathcal{M}_r$. We first define $\Retr: T(\bX)\rightarrow \reals^{n\times n}$ as the inverse of projection $P_{T(\bX)}: \calM_r\rightarrow T(\bX)$. Then we define the pullback of $f$ from $\calM_r\rightarrow \reals$ to $T(\bX)\rightarrow\reals$ by $
\hat{f}_{\bX}=f \cdot \Retr_{\bX}$. We refer the reader to \eqref{eq:retr} for a rigorous definition.



\begin{algorithm}
  \caption{PprojGD: perturbed projected gradient descent}
  \label{alg:matrix-reg-log}
\begin{flushleft}
{\bf Input:}  Objective function $f: \reals^{n\times n}\rightarrow \reals$; initialization $\bX^{(0)}\in \reals^{n\times n}$; step size $\eta$; criterion for improvement $\epsilon$; eigenvalue threshold for tangent space steps $\epsilon_T$; parameters of tangent space steps $(r,\eta_T,\mathcal{J})$; maximum number of iterations $\calT$.\\
{\bf Output:} Estimated $\bX^{(\iter)}$.\\
{\bf Steps:}\\
{\bf 1:}  Initialize  $\iter=0$.\\
{\bf 2:}  Compute  $\bX_+=\Pr\big(\bX-\eta\grad f(\bX)\big)$.\\
{\bf 3:}  Set
\[
\bX^{(\iter+1)}=\begin{cases}\bX_+, &\text{if $\|\bX_+-\bX^{(\iter)}\|_F\geq 2\eta\epsilon/3$}\\
\mathrm{TangentSpaceSteps}(\bX,r,\eta_T,\epsilon_T,\mathcal{J}),\,\,&\text{if $\|\bX_+-\bX^{(\iter)}\|_F< 2\eta\epsilon/3$ and $\sigma_r(\bX^{(\iter)})>2\epsilon_T$}\\
\text{terminate the algorithm; return $\bX^{(\iter)}$},\,\,&\text{if $\|\bX_+-\bX^{(\iter)}\|_F< 2\eta\epsilon/3$ and $\sigma_r(\bX^{(\iter)})<2\epsilon_T$}.
\end{cases}
\]
{\bf 4:} Set $\iter=\iter+1$.\\
{\bf 5:} Repeat steps 2-4 until $\iter=\calT$. Return $\bX^{(\iter)}$.
\end{flushleft}
\end{algorithm}
\begin{algorithm}
 \caption{Tangent Space Steps}
  \label{alg:tangent}
\begin{flushleft}
{\bf Input:}  Objective function $f: \reals^{n\times n}\rightarrow \reals$; current estimation $\bX\in \reals^{n\times n}$ and $\hat{f}_{\bX}$; number of iterations $\mathcal{J}$; step size $\eta_T$; eigenvalue bound $\epsilon_T$; perturbation size $r$.\\
{\bf Output:} $\mathrm{TangentSpaceSteps}(\bX,r,\eta_T,\epsilon_T,\mathcal{J})\in \reals^{n\times n}$.\\
{\bf Steps:}\\
{\bf 1:}  Initialize $j=0$ and $\bS^{(0)}=\eta_T \bS'$, where $\bS'$ is a random matrix in $T(\bX)$ such that $\|\bS'\|_F=r$.\\
{\bf 2:} Compute $\bS_+=\bS^{(j)}-\eta_T \grad \hat{f}_{\bX}(\bS^{(j)})$. \\
{\bf 3:} If $\|\bS_+\|_F\leq \epsilon_T$, then set  $\bS^{(j+1)}=\bS_+$. \\{\bf 4:} Otherwise, find $\eta_T'$ satisfies $\bS^{(j+1)}=\bS^{(j)}-\eta_T' \grad \hat{f}_{\bX}(\bS^{(j)})$ satisfies $\|\bX^{(j+1)}\|_F=\epsilon_T$. Terminate the algorithm and return $\Retr_{\bX}(\bS^{(j+1)})$. \\
{\bf 5:} Set $j=j+1$.\\
{\bf 6:} Repeat steps 2-5 until $j=\calJ$. Return  $\Retr_{\bX}(\bS^{(j)})$.
\end{flushleft}
\end{algorithm}

Next, we establish the theoretical guarantees of PprojGD. In our theoretical analysis, we define $\mathbf{X}$ as a $(\epsilon,\gamma)$-second order local minimizer if
\begin{equation}\label{eq:secondorder}
\|\grad\hat{f}_{\bX}(0)\|_F\leq \epsilon,\,\, \lambda_{\min}(\grad^2 \hat{f}_{\bX}(0))\geq -\gamma. 
\end{equation}

Additionally, we make the assumption on the second derivative of $f$, which is a standard requirement in the analysis of saddle point-avoiding algorithms. This condition, often referred to as ``$\rho$-Hessian Lipschitz'' in literature such as \citep{Jin2017HowTE,NEURIPS2019_7486cef2}.

\textbf{Assumption A3} [$\rho$-Hessian Lipschitz]\begin{equation}\label{eq:assumption3}
\|\grad^2 f(\bX)-\grad^2 f(\bX')\|\leq \rho\|\bX-\bX'\|_F
\end{equation}
for any $\bX,\bX'\in\reals^{n\times n}$ with ranks no more than $r$.

Theorem~\ref{thm:perturb} provides the  theoretical guarantee of PprojGD,  indicating that with high probability, the algorithm either converges to a $(\epsilon,\gamma)$-second order local minimizer, or it converges to a stationary point of $f$ within the ambient space $\mathbb{R}^{n\times n}$.


\begin{thm}[Approximate second-order optimality of PprojGD]\label{thm:perturb}
Given Assumptions A1-A3, and assuming that $M$ serves as an upper bound for its first derivative within a specified region: $
M=\max_{\bX: \rank(\bX)=r, f(\bX)\leq f(\bX^{(0)})}\|\grad \hat{f}_{\bX}(0)\|_F.$ 
If we choose step size $\eta\leq 1/2L$,  and parameters in Algorithm~\ref{alg:matrix-reg-log} such that $C\sqrt{\epsilon(\rho+M)}/\gamma \leq \epsilon_T\leq 1, \eta_T\leq \min(C\epsilon_T/(L+\rho+M),1/2L)$
for some $C>0$, $r=\frac{\epsilon}{400\chi^3}$, and $\mathcal{J}=\frac{\chi}{\eta_T\sqrt{\rho_T\epsilon}},$
where
\begin{equation}\label{eq:chi_need}
\chi\geq \max C\Big(4\log\Big(2^{10}(f(\bX^{(0)})-f(\bX_*))\Big(\frac{2}{\Big(\frac{1}{\eta}- L\Big)\eta^2\epsilon^2}+\sqrt{\frac{\rho_T}{\epsilon^3}}\Big)\frac{\sqrt{2rn}}{\eta_T\sqrt{\rho_T\epsilon}}\Big)-\log\alpha+1\Big)
\end{equation}
then in the
\begin{equation}\label{eq:iterations}
\frac{f(\bX)-f(\bX_*)}{\min(\frac{1}{100\chi^3}\sqrt{\frac{\epsilon^3}{\rho_T}},\eta^2\epsilon^2)}
\end{equation}
iterations, with probability at least $1-\alpha$, the algorithm converges to either an $(\epsilon,\gamma)$-second order local minimizer $\bX$ with $\sigma_r(\bX)\geq 2\epsilon_T$; 
or a stationary point of $f$ in $\reals^{n\times n}$ in the sense that $\|\grad f(\bX)\|<  \frac{8}{3}(\epsilon+\epsilon_T/\eta).$
\end{thm}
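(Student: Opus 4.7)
The plan is to classify each iteration of Algorithm~\ref{alg:matrix-reg-log} into one of three regimes and to lower-bound the per-iteration decrease of $f$ in each, then aggregate against the total available descent budget $f(\bX^{(0)})-f(\bX_*)$. The three regimes are: (i) a productive ProjGD step, triggered when $\|\bX_+-\bX^{(\iter)}\|_F \geq 2\eta\epsilon/3$; (ii) a tangent space phase (Algorithm~\ref{alg:tangent}), triggered when the step is small but $\sigma_r(\bX^{(\iter)})>2\epsilon_T$; (iii) termination, when both the step is small and $\sigma_r(\bX^{(\iter)})<2\epsilon_T$.

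\textbf{ProjGD and tangent space decrements.} In regime (i), I would apply the standard descent lemma using Assumption A1 restricted to the rank-$2r$ subspace containing $\bX^{(\iter)}$ and $\bX_+$ to obtain
\[
f(\bX^{(\iter)}) - f(\bX_+) \geq \tfrac{1}{2}\Big(\tfrac{1}{\eta}-L\Big)\|\bX_+-\bX^{(\iter)}\|_F^2,
\]
which combined with $\|\bX_+-\bX^{(\iter)}\|_F \geq 2\eta\epsilon/3$ and $\eta \leq 1/(2L)$ yields a decrement on the order of $\big(\tfrac{1}{\eta}-L\big)\eta^2\epsilon^2$, matching the $\eta^2\epsilon^2$ factor in \eqref{eq:iterations}. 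For regime (ii), I would transplant the saddle-escape machinery of \cite{Jin2017HowTE} and \cite{NEURIPS2019_7486cef2} onto the tangent space $T(\bX)$. The pullback $\hat{f}_{\bX}$ inherits Lipschitz gradient and Lipschitz Hessian from $f$ with effective constants of order $L+\rho+M$ and $\rho_T$, the extra terms coming from the second fundamental form of $\calM_r$, which is bounded by $1/\sigma_r(\bX)$---this is precisely why the gate $\sigma_r(\bX)>2\epsilon_T$ is imposed. A random isotropic perturbation of radius $r$ on $T(\bX)$ misses the ``stuck'' region of $\hat{f}_{\bX}$ with probability at least $1-\alpha$ whenever $\lambda_{\min}(\grad^2\hat{f}_{\bX}(0))<-\sqrt{\rho_T\epsilon}$, and the subsequent $\calJ$ tangent-space gradient steps either decrease $\hat{f}_{\bX}$ by at least $\Theta(\sqrt{\epsilon^3/\rho_T}/\chi^3)$ or exit the trust region at line 4, in which case the trust-region exit argument produces the same decrement after retraction back to $\calM_r$.

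\textbf{Termination and optimality characterization.} In regime (iii), I would first observe that $\|\bX_+-\bX^{(\iter)}\|_F<2\eta\epsilon/3$ already gives $\|P_{T(\bX)}\grad f(\bX^{(\iter)})\|_F\lesssim\epsilon$ (hence $\|\grad\hat{f}_{\bX}(0)\|_F\leq\epsilon$ up to a curvature correction from the retraction). To control the ambient gradient, I would use the triangle inequality
\[
\eta\|\grad f(\bX^{(\iter)})\|_F \leq \|\bX^{(\iter)} - \bX_+\|_F + \|\bX_+ - \big(\bX^{(\iter)}-\eta\grad f(\bX^{(\iter)})\big)\|_F,
\]
bounding the first term by the branch condition and the second via Weyl's inequality: since $\sigma_r(\bX^{(\iter)})<2\epsilon_T$ and $\bX_+$ is the rank-$r$ truncation of $\bX^{(\iter)}-\eta\grad f(\bX^{(\iter)})$, the truncation error (a sum of squared tail singular values of $\bX^{(\iter)}-\eta\grad f(\bX^{(\iter)})$) is controlled by a constant multiple of $\epsilon_T$, producing the claimed $\|\grad f(\bX^{(\iter)})\|_F \leq \tfrac{8}{3}(\epsilon+\epsilon_T/\eta)$. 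If instead the algorithm runs indefinitely with $\sigma_r>2\epsilon_T$ but no tangent-space phase ever succeeds, then the failure of each perturbed escape forces $\lambda_{\min}(\grad^2\hat{f}_{\bX}(0))\geq-\gamma$, so the iterate is an $(\epsilon,\gamma)$-second-order local minimizer. Summing the per-phase decrements against $f(\bX^{(0)})-f(\bX_*)$ yields the iteration count \eqref{eq:iterations}; the factor $\chi$ in \eqref{eq:chi_need} is calibrated exactly so that a union bound over all tangent phases succeeds with probability at least $1-\alpha$.

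\textbf{Main obstacle.} The hardest step will be regime (ii): the retraction $\Retr_{\bX}$ is well defined only away from the lower-rank stratum, and the Lipschitz constants of $\hat{f}_{\bX}$ carry $1/\sigma_r(\bX)$ factors from the curvature of $\calM_r$. Propagating these curvature-dependent constants through the entire saddle-escape calculation, and verifying that the prescribed choices of $(\epsilon_T,\eta_T,r,\calJ)$ simultaneously yield the advertised decrement and keep every tangent-space iterate inside the trust region where the constants are valid, is the main technical challenge. A secondary subtlety is the Weyl-inequality step in regime (iii), whose dependence on $\sigma_r(\bX^{(\iter)})<2\epsilon_T$ is exactly what generates the $\epsilon_T/\eta$ correction in the final guarantee.
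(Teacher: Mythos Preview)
Your three-regime decomposition and the per-regime decrement bookkeeping match the paper's proof exactly: the paper uses Lemma~\ref{lemma:descent2} for regime (i), Lemma~\ref{lemma:b2} (imported from \cite{NEURIPS2019_7486cef2}) together with the curvature-dependent Lipschitz bounds of Lemma~\ref{lemma:retraction_deri} for regime (ii), and Lemma~\ref{lemma:alg_stop} for regime (iii), then divides the budget $f(\bX^{(0)})-f(\bX_*)$ by the minimum per-phase decrement and union-bounds over the tangent-space phases. Your identification of the main obstacle---the $1/\sigma_r(\bX)$ blow-up in the pullback's Lipschitz constants, and the need to keep every tangent iterate inside the trust region where those constants are valid---is precisely where the paper expends most of its technical effort (the lengthy proof of Lemma~\ref{lemma:retraction_deri}).

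There is one genuine gap, in your regime-(iii) argument. The Frobenius-norm triangle inequality you propose is circular: the truncation term $\|\bX_+-(\bX-\eta\grad f(\bX))\|_F=\big(\sum_{i>r}\sigma_i(\bX-\eta\grad f(\bX))^2\big)^{1/2}$ is \emph{not} controlled by $\sigma_r(\bX)<2\epsilon_T$ alone---Weyl only gives $\sigma_i(\bX-\eta\grad f(\bX))\leq \eta\|\grad f(\bX)\|$ for $i>r$, which is exactly the quantity you are trying to bound, so substituting back yields a vacuous inequality. The paper instead argues in operator norm via Lemma~\ref{lemma:bZ0}: from $\sigma_{r+1}(\bX-\eta\grad f)\leq\sigma_r(\bX-\eta\grad f)=\sigma_r(\bX_+)\leq\sigma_r(\bX)+\|\bX_+-\bX\|$ and the reverse triangle inequality one obtains $\eta\|\grad f(\bX)\|\leq 2\|\bX_+-\bX\|_F+\sigma_r(\bX)$, and now the two branch conditions give $\|\grad f(\bX)\|<\tfrac{8}{3}(\epsilon+\epsilon_T/\eta)$. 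Note also that the theorem's conclusion is stated in operator norm, not Frobenius. This is a local fix and does not affect the rest of your plan.
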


\textbf{Discussion on order of parameters} Assuming $L$, $\mu$, and $\rho$ are of the order $O(1)$, and $\|\bX^{(0)}-\bX_*\|_F=O(1)$, it follows from Assumption A1 that $M$ is also of the order $O(1)$. Consequently, selecting $\eta=1/3L$,  $\epsilon_T=O(\sqrt{\epsilon})$ and $\eta_T=O(\epsilon_T)$, along with setting $\chi=O(1)$ and $r=O(\epsilon)$ (disregarding logarithmic factors), suffices. With these choices of parameters, the number of iterations in \eqref{eq:iterations} is  $O(1/\epsilon^2)$.

\textbf{Comparison with existing works} We note that existing saddle point-avoiding algorithms are not directly applicable in our context. Specifically, the manifold algorithm proposed in \citep{NEURIPS2019_7486cef2} does not suit our needs due to the failure of Assumption 3 therein. This is because the tangent space $T(\bX)$ is not defined if $\rank(\bX)<r$ and  Assumption 3 in \citep{NEURIPS2019_7486cef2} does not hold. Our approach circumvents this challenge by confining the tangent space steps to scenarios where $\sigma_r(\bX)>2\epsilon_T$.


The theoretical guarantee provided by Theorem~\ref{thm:perturb} can be contrasted with \cite[Theorem 8]{DBLP:journals/jmlr/ZhangFZ23}. While both works offer assurances for perturbed algorithms that evade saddle points, there are notable distinctions. First of all, we work with asymmetric matrices instead of positive semidefinite matrices. Second, in \cite[Theorem 8]{DBLP:journals/jmlr/ZhangFZ23}, the approximate second-order local minimizers are defined through Burer-Monteiro factorization, which may result in weaker outcomes compared to Theorem~\ref{thm:perturb}. 
 It demonstrates convergence to a point  $\bX$ such that for $\bX=\bV\bV^T$ with $\bV\in\reals^{n\times r}$, 
 \begin{equation}\text{$\|\grad_{\bV} f(\bV\bV^T)\|_F<\epsilon_1$, $\lambda_{\min}(\grad_{\bV}^2 f(\bV\bV^T))>-\epsilon_2$, for small $\epsilon_1,\epsilon_2$.}\label{eq:secondorder2}\end{equation} However, the set in \eqref{eq:secondorder2} contains a broader range of elements compared to our definition in \eqref{eq:secondorder}. 
As an illustrating example, consider the squared loss $f(\bX)=\|\bX-\bX_*\|_F^2$, with $\bX\in\reals^{n\times n}$ and the corresponding $\bV$ being small and approximately zero. Regardless of the choice of $\bX_*$, $\grad_{\bV} f(\bV\bV^T)=\grad_{\bX} f(\bX)\bV\approx 0,$ and similarly, $\grad_{\bV}^2 g(\bV)\approx 0$, so $\bX$ satisfies  \eqref{eq:secondorder2}. In comparison, for generic $\bX_*$, $\bX\approx 0$ does not qualify as a $(\epsilon,\gamma)$-second order local minimizer as defined in \eqref{eq:secondorder}.

\textbf{Special case when $L/\mu<3$}  Following the proof of Theorem~\ref{thm:perturb}, we can derive an interesting result stating that there are no spurious local minimizers for estimating asymmetric low-rank matrices, as summarized in Corollary~\ref{thm:global}(a). This result extends the findings of \cite[Corollary 1.2]{zhang2022improved} to the asymmetric matrix setting, employing a distinct proof strategy. Additionally, Corollary~\ref{thm:global}(b) highlights that PprojGD converges to a solution close to the minimizer when $L/\mu<3$.
\begin{cor}\label{thm:global}
(a) [No spurious local minimizers]  Under Assumptions A1-A3, and assuming that $L/\mu< 3$, then $\bX_*$ is the unique local minimizer to the optimization problem \eqref{eq:problem}.

(b) Any  $(\epsilon,\frac{3\mu-L}{4})$-second order minimizer, as defined in \eqref{eq:secondorder}, ensures $\|\bX-\bX_*\|_F \leq \frac{4\epsilon}{3\mu-L}$. Consequently, according to Theorem~\ref{thm:perturb}, PprojGD with $\gamma=\frac{3\mu-L}{4}$ converges to an approximate solution close to $\bX_*$ when $L/\mu< 3$.
\end{cor}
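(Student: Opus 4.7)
My plan is to derive part (a) immediately from part (b) by a simple reduction. Any local minimizer $\bX$ of $f$ on the rank-$r$ manifold $\calM_r$ is, by the first- and second-order necessary conditions for the pullback at $0$, a critical point with positive semidefinite Riemannian Hessian: $\grad\hat f_{\bX}(0)=0$ and $\grad^2\hat f_{\bX}(0)\succeq 0$. When $L/\mu<3$ the threshold $\gamma:=(3\mu-L)/4$ is strictly positive, so the second-order condition trivially sharpens to $\grad^2\hat f_{\bX}(0)\succeq-\gamma I$, i.e., $\bX$ is a $(0,\gamma)$-second-order minimizer in the sense of \eqref{eq:secondorder}. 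Invoking (b) with $\epsilon=0$ then forces $\|\bX-\bX_*\|_F=0$.

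For part (b) I would use the tangent vector $\bDelta_T:=P_{T(\bX)}(\bX_*-\bX)$ as the central test direction. Writing $\bDelta:=\bX_*-\bX=\bDelta_T+\bDelta_N$ with $\bDelta_N\in T(\bX)^\perp$, and observing that both $\bX$ and $\bX_*$ lie in $\calM_r$, the projective retraction satisfies $\Retr_{\bX}(\bDelta_T)=\bX_*$ whenever $\bX_*$ falls in its domain (which I would secure either by a localization argument or by absorbing the defect into a Hessian-Lipschitz remainder). Hence $\hat f_{\bX}(\bDelta_T)=f(\bX_*)$, and Taylor's theorem with integral remainder along the ray gives
\[
f(\bX_*)=f(\bX)+\langle\grad\hat f_{\bX}(0),\bDelta_T\rangle+\int_0^1(1-t)\langle\grad^2\hat f_{\bX}(t\bDelta_T)[\bDelta_T],\bDelta_T\rangle\,dt.
\]
The linear term is bounded by $\epsilon\|\bDelta_T\|_F$ via the first-order hypothesis, and the integral is bounded below by $-\tfrac12\gamma\|\bDelta_T\|_F^2$ plus a controlled cubic in $\|\bDelta_T\|_F$ by propagating the second-order hypothesis $\grad^2\hat f_{\bX}(0)\succeq-\gamma I$ along the segment using the Hessian-Lipschitz estimate derived from Assumption~A3.

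On the other side, rank-$2r$ restricted strong convexity together with $\grad f(\bX_*)=0$ delivers $f(\bX)-f(\bX_*)\geq\tfrac12\mu\|\bDelta\|_F^2$. Combining the two estimates and using $\|\bDelta_T\|_F\leq\|\bDelta\|_F$ yields an inequality of the form $(\mu-\gamma)\|\bDelta\|_F^2\lesssim\epsilon\|\bDelta\|_F$ (up to a cubic correction controlled by $\rho$), which upon substituting $\gamma=(3\mu-L)/4$ produces a bound of order $\epsilon/(3\mu-L)$ and, in particular, forces $\bDelta=0$ when $\epsilon=0$.

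The chief obstacle is recovering the exact constant $4/(3\mu-L)$ stated in the corollary, equivalently extending the usable regime to the full $L/\mu<3$ rather than a more restrictive range that the crudest combination of smoothness and convexity would give. To close this I would replace the generic smoothness estimate on $\langle\grad^2 f(\bX)[\bDelta_T],\bDelta_T\rangle$ by the explicit curvature-correction identity
\[
\langle\grad^2\hat f_{\bX}(0)[\bS],\bS\rangle=\langle\grad^2 f(\bX)[\bS],\bS\rangle+2\langle\grad f(\bX),P_{T(\bX)^\perp}(\bS\bX^\dagger\bS)\rangle,
\]
evaluated at $\bS=\bDelta_T$, and use that $P_{T(\bX)^\perp}(\bDelta_T\bX^\dagger\bDelta_T)$ coincides with $\bDelta_N$ to leading order for the projective retraction. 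Combining this identity with the restricted monotonicity bound $\langle\grad f(\bX),\bDelta\rangle\leq-\mu\|\bDelta\|_F^2$ and the first-order slack $|\langle\grad f(\bX),\bDelta_T\rangle|\leq\epsilon\|\bDelta_T\|_F$ sharpens the final inequality to the announced threshold. As a cross-check, the same threshold emerges in the factored parametrization $\bX=\bL\bR^T$ via the identity $(\bL_*-\bL)\bR^T+\bL(\bR_*-\bR)^T+(\bL_*-\bL)(\bR_*-\bR)^T=\bDelta$ combined with the factor-space second-order condition and the representation $\grad f(\bX)=-\int_0^1\grad^2 f(\bX_t)[\bDelta]\,dt$; algebraic rearrangement then yields $\langle\grad^2 f[\bDelta-2\bH],\bDelta-2\bH\rangle\geq 3\langle\grad^2 f[\bH],\bH\rangle$ with $\bH=(\bL_*-\bL)(\bR_*-\bR)^T$, so comparing rank-$2r$ smoothness on the left with rank-$r$ strong convexity on the right makes $L<3\mu$ the natural threshold.
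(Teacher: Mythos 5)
Your route to part (b) is genuinely different from the paper's and, as written, has gaps that it would take real work to close.

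The paper does \emph{not} Taylor-expand along a tangent ray connecting $\bX$ to $\bX_*$. Instead, it argues by contradiction about the structure of $\grad f(\bX)$ itself: by Lemma~\ref{lemma:bZ1}, $\grad f(\bX)$ is a $\kappa_0$-relative perturbation of $\bX-\bX_*$; combining this with the first-order condition $\|P_{T(\bX)}\grad f(\bX)\|_F\leq\epsilon$, the normal component $\bZ=P_{T(\bX)^\perp}\grad f(\bX)$ satisfies $\|\bZ-(\bX-\bX_*)\|_F\leq\kappa_0'\|\bX-\bX_*\|_F$ with $\kappa_0'=\kappa_0+\epsilon/\|\bX-\bX_*\|_F$; Lemma~\ref{lemma:orthogonalrate0} then forces $\|\bZ\|\geq\frac{1-\kappa_0'^2}{\kappa_0'}\sigma_r(\bX)$; and Lemma~\ref{lemma:retraction_deri}(b) converts a large normal gradient (relative to $\sigma_r(\bX)$) directly into a strictly negative eigenvalue of $\grad^2\hat f_{\bX}(0)$. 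The crucial point is that the factor $\sigma_r(\bX)$ \emph{cancels} between Lemma~\ref{lemma:orthogonalrate0} and Lemma~\ref{lemma:retraction_deri}(b), so the resulting curvature bound is uniform in $\sigma_r(\bX)$. Solving $\frac{1-\kappa_0'^2}{\kappa_0'}-L>\gamma$ for $\kappa_0'$ gives the threshold and the constant $\frac{4\epsilon}{3\mu-L}$.

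Your Taylor-expansion approach has no analogous cancellation and runs into the following concrete problems. First, the identity $\Retr_{\bX}(\bDelta_T)=\bX_*$ needs $\bX_*\in\calM_r$ and $[\bX_*]_{\bU_{\bX},\bV_{\bX}}$ invertible; it fails whenever $\rank(\bX_*)=r_*<r$, which Assumption~A2 explicitly permits (and which is the regime where ProjGD's advantages are most interesting). Second, even when $r_*=r$, your cubic remainder is governed by the pullback Hessian--Lipschitz constant, which by Lemma~\ref{lemma:retraction_deri}(a) is $\rho_T\asymp\rho/\epsilon_T^2+M/\epsilon_T^2$ with $\epsilon_T\lesssim\sigma_r(\bX)$; this blows up as $\sigma_r(\bX)\to 0$ and cannot simply be ``absorbed'' without further argument, and the bound $\|\bDelta_T\|_F\leq\|\bDelta\|_F$ you invoke gives $\|\bDelta\|_F\lesssim\epsilon/(\mu-\gamma)=8\epsilon/(\mu+L)$, not the stated $4\epsilon/(3\mu-L)$ — your two proposed fixes (the curvature-correction identity and the factored-space rearrangement) remain sketches and are not verified. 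Third, your reduction of part (a) to part (b) with $\epsilon=\gamma=0$ only covers the case $\rank(\bX)=r$; the paper separately handles $\rank(\bX)<r$ by noting that Lemma~\ref{lemma:orthogonalrate0} and Lemma~\ref{lemma:retraction_deri}(b) still apply with $\sigma_r(\bX)$ replaced by $\sigma_{\rank(\bX)}(\bX)$. Your reduction as stated leaves that case open. To align with the paper's argument you should drop the Taylor expansion entirely and instead chain Lemma~\ref{lemma:bZ1}, Lemma~\ref{lemma:orthogonalrate0}, and Lemma~\ref{lemma:retraction_deri}(b) as above.
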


\section{Numerical Experiments}\label{sec:simu} 
In this section, we validate our theoretical findings through simulations. We conduct a comparative analysis of ProjGD with ScaledGD proposed by \cite{10.5555/3546258.3546408} and FGD defined in \eqref{eq:algorithm_factor1}. While we evaluate ProjGD within the context of asymmetric matrix estimation, we extend our simulations to cover both asymmetric and positive semi-definite matrix estimation scenarios. For the latter, we introduce an additional comparison with PrecGD, proposed by \cite{DBLP:journals/jmlr/ZhangFZ23}. It is worth noting that PrecGD is not suitable for handling the estimation of asymmetric matrices. We omit PprojGD from the comparison, as it primarily serves theoretical interests, with ProjGD being the practical choice for avoiding saddle points.

In our simulations, we tackle the low-rank matrix sensing problem. Inspired by \citep{10.5555/3546258.3546408}, we work with a low-rank matrix $\bX_*\in\reals^{n\times n}$ and $m=3nr$  observations in the form of $y_i=\langle\bA_i,\bX_*\rangle$. Here, the measurement matrices $\bA_i$ are created with independent and identically distributed (i.i.d.) Gaussian entries, each having a zero mean and variance of $1/m$. Our objective is to solve the problem\[
\argmin_{\rank(\bX)=r}\|\cA(\bX)-\by\|^2,
\]
where $\cA: \Real^{n\times n}\rightarrow\Real^m$ is the operator such that  $\cA(\bX)_i=\langle\bA_i,\bX\rangle$ for $1\leq i\leq m$, $\by=[y_1,\cdots,y_m]\in\reals^m$.

To investigate the impact of the effective condition number $\kappa(\bX_*)$, our experiments employ $r=4$ and consider two scenarios for $\bX*$: (1) $r_* = 4 = r$, and (2) a rank-deficient scenario with $r_* = 2 < r$.  In each scenario, we generate the ground truth matrix $\bX_*\in\reals^{n\times n}$ by $\bX_*=\bU_*\bSigma_*\bV_*^T$ where $\bU_*, \bV_*\in\reals^{n\times r_*}$ are independently and randomly generated orthogonal matrices. For both settings, $\Sigma_*\in\reals^{r_*\times r_*}$ is a diagonal matrix whose diagonal entries are set to be linearly distributed from $1$ to $1/\kappa$, with $\kappa=1$ or $20$. For the first scenario, $\kappa=1$ represents the well-conditioned setting and $\kappa=20$ represents the ill-conditioned setting. For the second setting of $r_*=2$, we let $\Sigma_* = \mathrm{diag}(1, 1/\kappa, 0, 0)$ with $\kappa=1$ or $20$. To ensure fair comparisons, we adopt the spectral initialization method from \cite{10.5555/3546258.3546408}, using the rank-$r$ approximation of $\sum_{i=1}^m y_i\bA_i$; and we use step sizes of $0.4$ or $0.6$ for all algorithms.

In the first simulation, we illustrate the convergence performance by plotting the relative error $\|\bX^{(\iter)}-\bX_*\|_F / \|\bX_*\|_F$ against the iteration count in Figure~\ref{fig:nonsymmetric} for the ProjGD, ScaledGD, and FGD algorithms. Our observations are as follows:
\begin{itemize} 
\item ProjGD consistently exhibits linear convergence towards the global minimum across all scenarios and step sizes. As predicted by our theoretical analysis, the convergence rate remains unaffected by the effective condition number.

\item FGD performs well in the well-conditioned setting of $r=4$ and $\kappa=1$, demonstrating similar convergence compared to ScaledGD and ProjGD. However, it exhibits slower convergence in the ill-conditioned scenario of $r=4$ and $\kappa=20$, and fails to converge linearly when $r_*<r$.

\item While ScaledGD generally has linear convergence with a rate independent of the condition number, it may encounter instability issues when the step size is large.
\end{itemize}

In the second simulation, we investigate the performance of these algorithms on the estimation of symmetric, positive semi-definite matrices and include the comparison with PrecGD. For this setting, $\bX_*$ is generated by $\bX_*=\bU_*\bSigma_*\bU_*^T$. The relative error $\|\bX^{(\iter)}-\bX_*\|_F / \|\bX_*\|_F$ with respect to the iteration count is recorded in Figure~\ref{fig:symmetric}, in which we observe a similar performance as in Figure~\ref{fig:nonsymmetric}. As for PrecGD, its performance is better than ScaledGD but has a slower convergence rate than ProjGD in the scenario in Figure~\ref{fig:symmetric}(c).

\begin{figure}[t] 
    \begin{center}
    \subfloat[$r = r_* = 4$]{\includegraphics[width=0.4\linewidth]{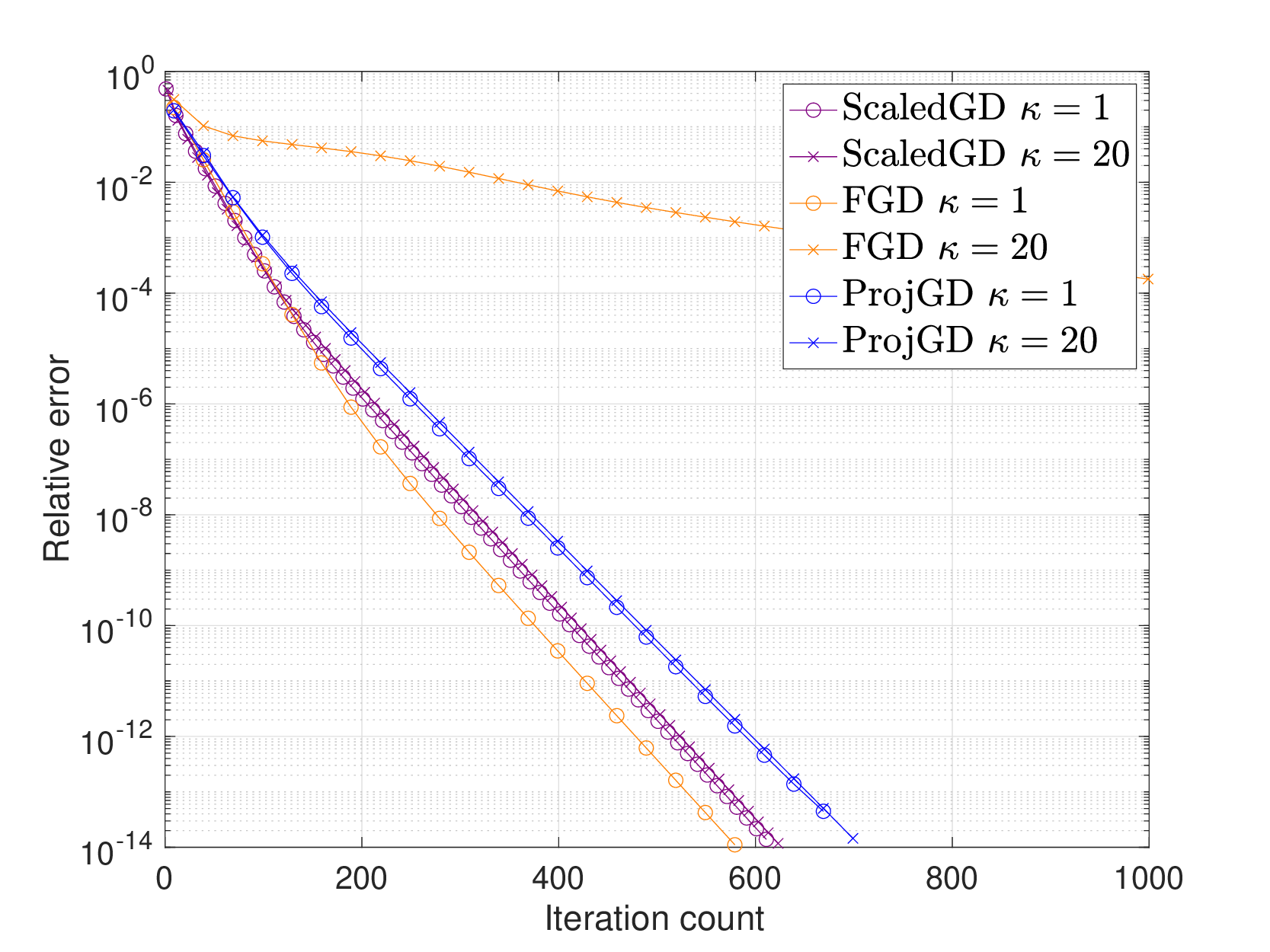} }
    \subfloat[ $r = 4 > r_* = 2$, ]{\includegraphics[width=0.4\linewidth]{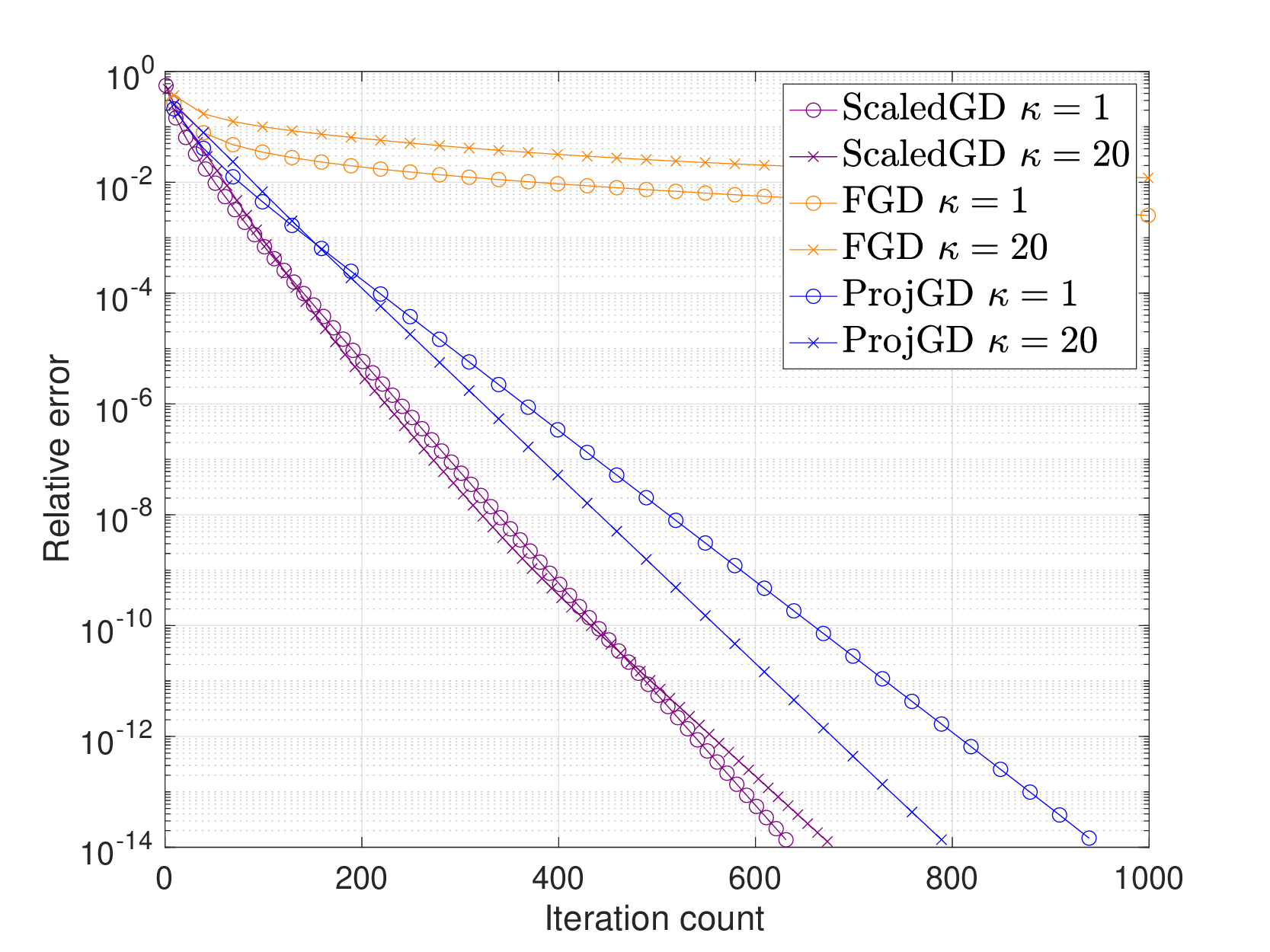} }
    \end{center}
    \begin{center}
    \subfloat[$r = r_* = 4$]{\includegraphics[width=0.4\linewidth]{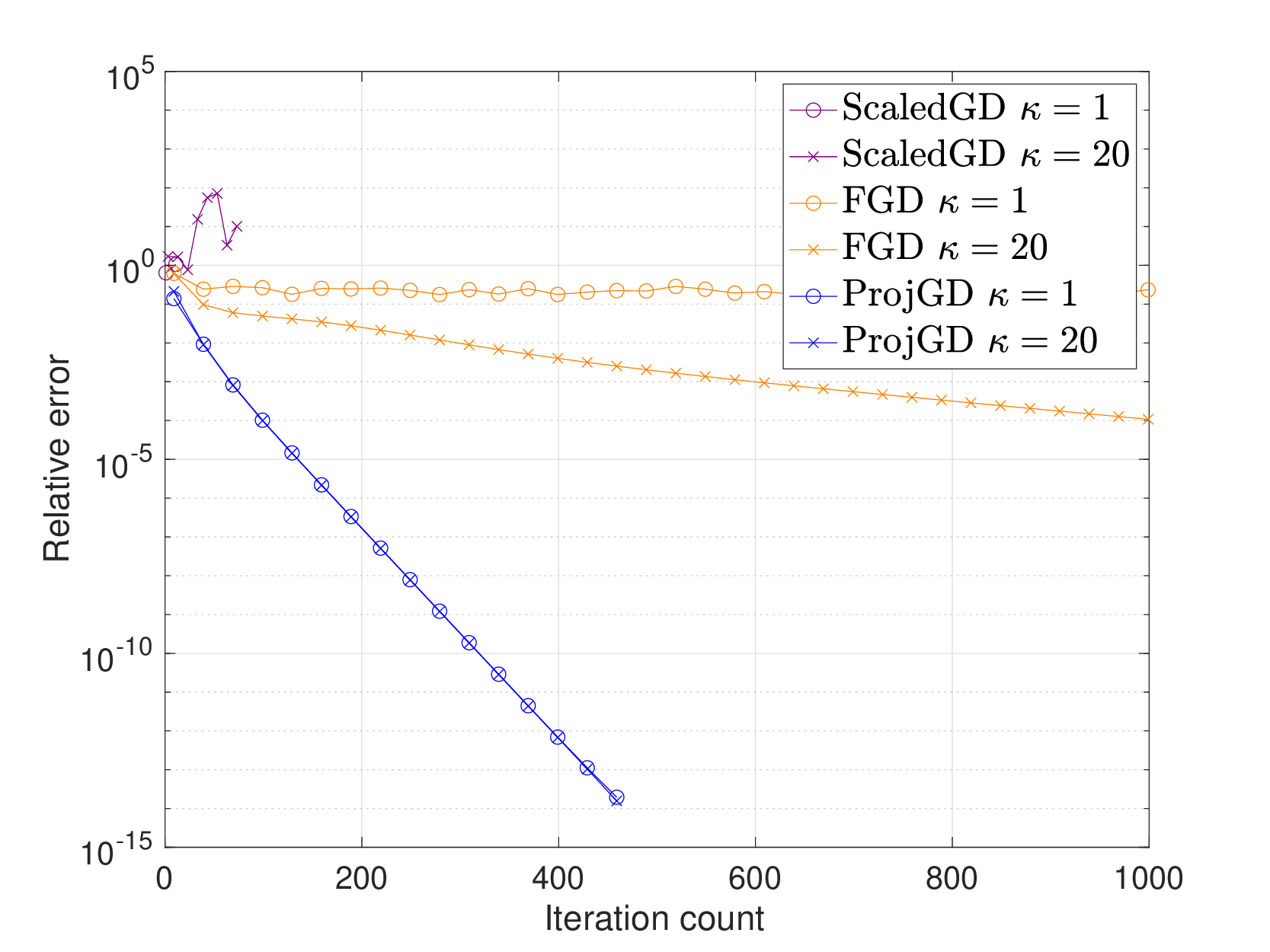} }
    \subfloat[ $r = 4 > r_* = 2$, ]{\includegraphics[width=0.4\linewidth]{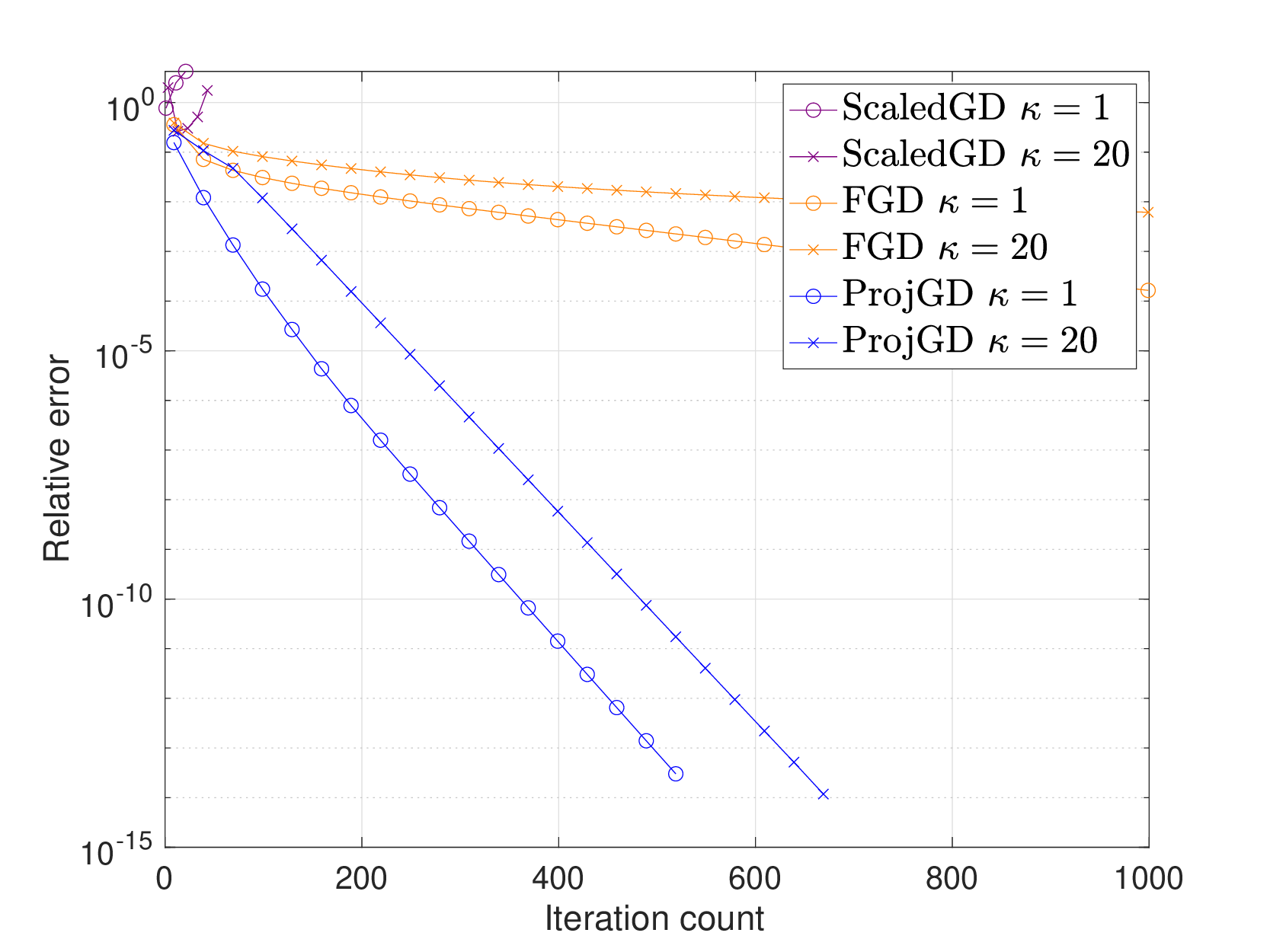} }
    \caption{Comparison of ProjGD, FGD,  and ScaledGD algorithms for the estimation of asymmetric matrices. Identical step sizes ($\eta = 0.4$ in the first row and $\eta = 0.6$ in the second row) were employed for all three algorithms, with matrix dimensions set to $n = 10$ and ranks of $r_* = 4$ or $r_* = 2$. Notably, only ProjGD exhibits consistent linear convergence towards the solution.} 
    \label{fig:nonsymmetric}
    \end{center}
\end{figure}

\begin{figure}[t] 
    \begin{center}
    \subfloat[$r = r_* = 4$]{\includegraphics[width=0.4\linewidth]{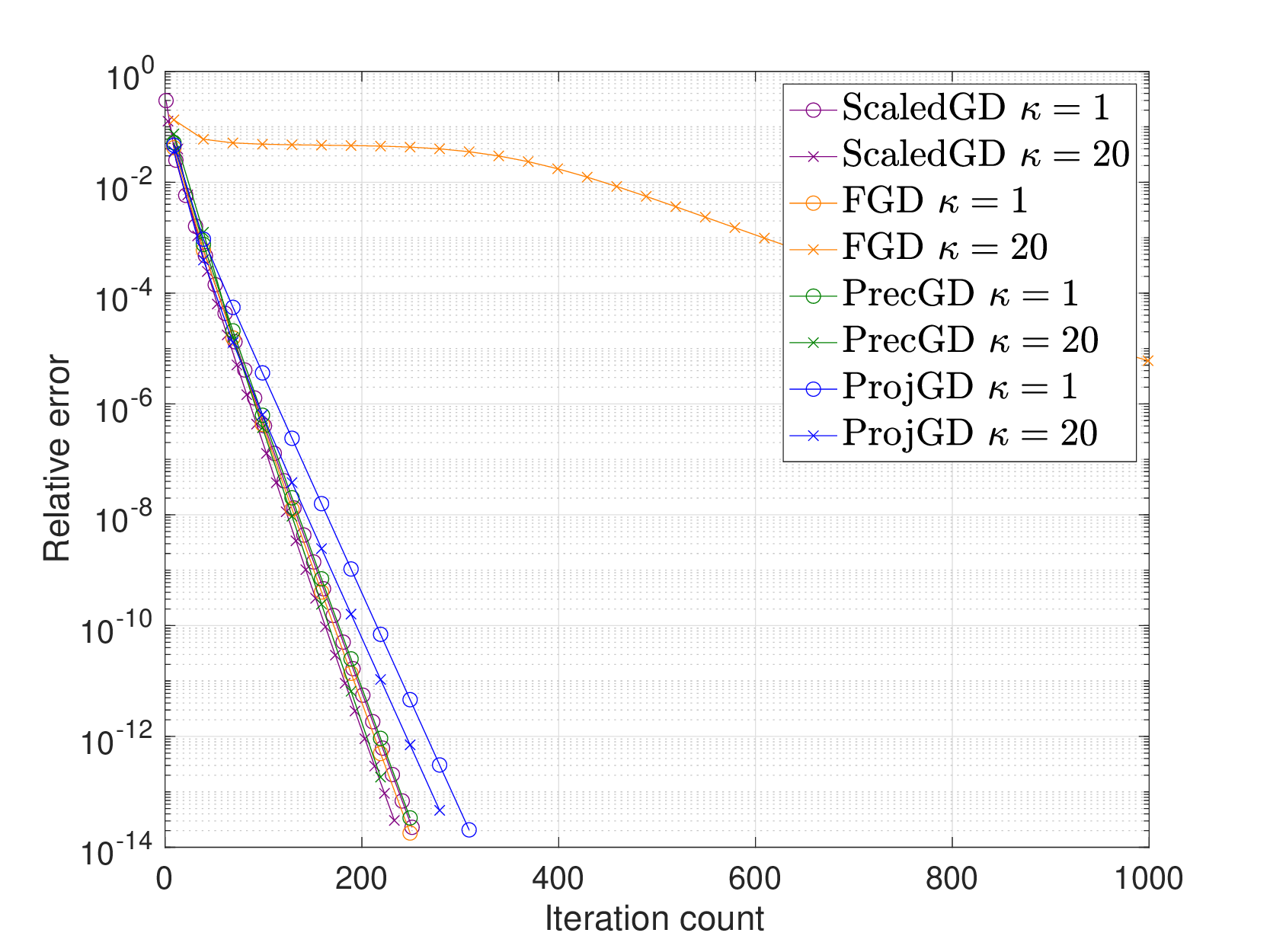} }
    \subfloat[ $r = 4 > r_* = 2$, ]{\includegraphics[width=0.4\linewidth]{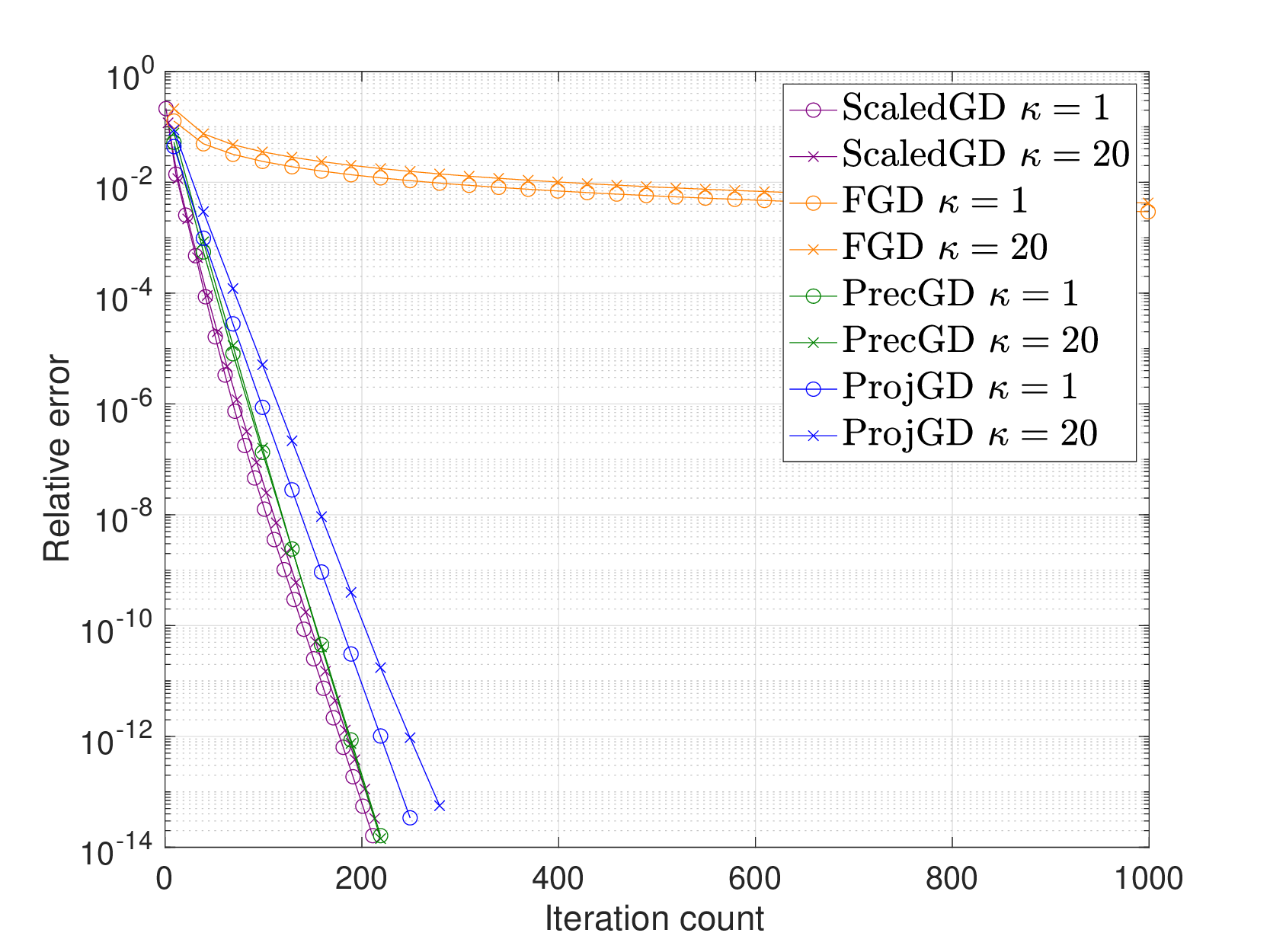} }
    \end{center}
    \begin{center}
    \subfloat[$r = r_* = 4$]{\includegraphics[width=0.4\linewidth]{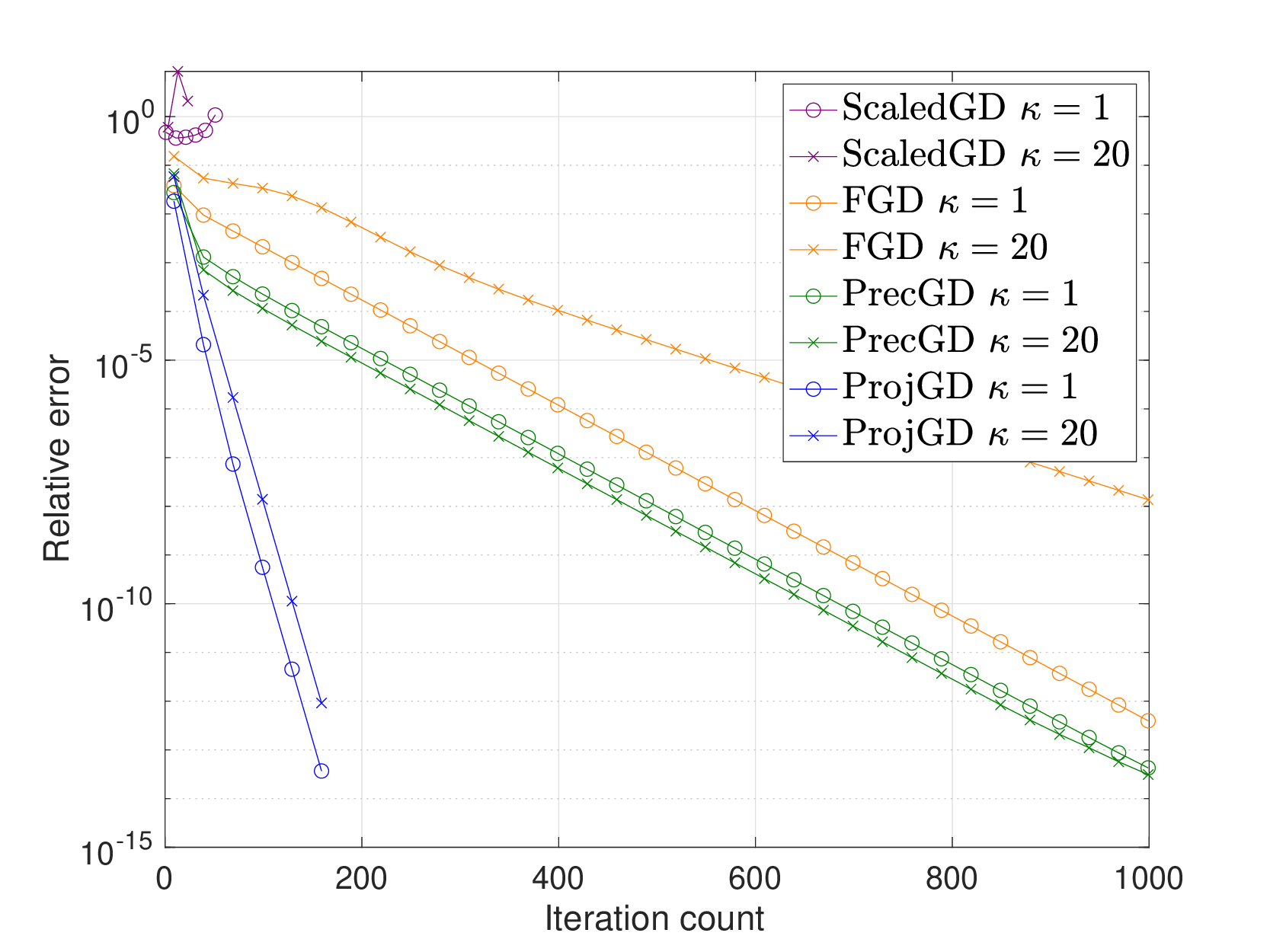} }
    \subfloat[ $r = 4 > r_* = 2$, ]{\includegraphics[width=0.4\linewidth]{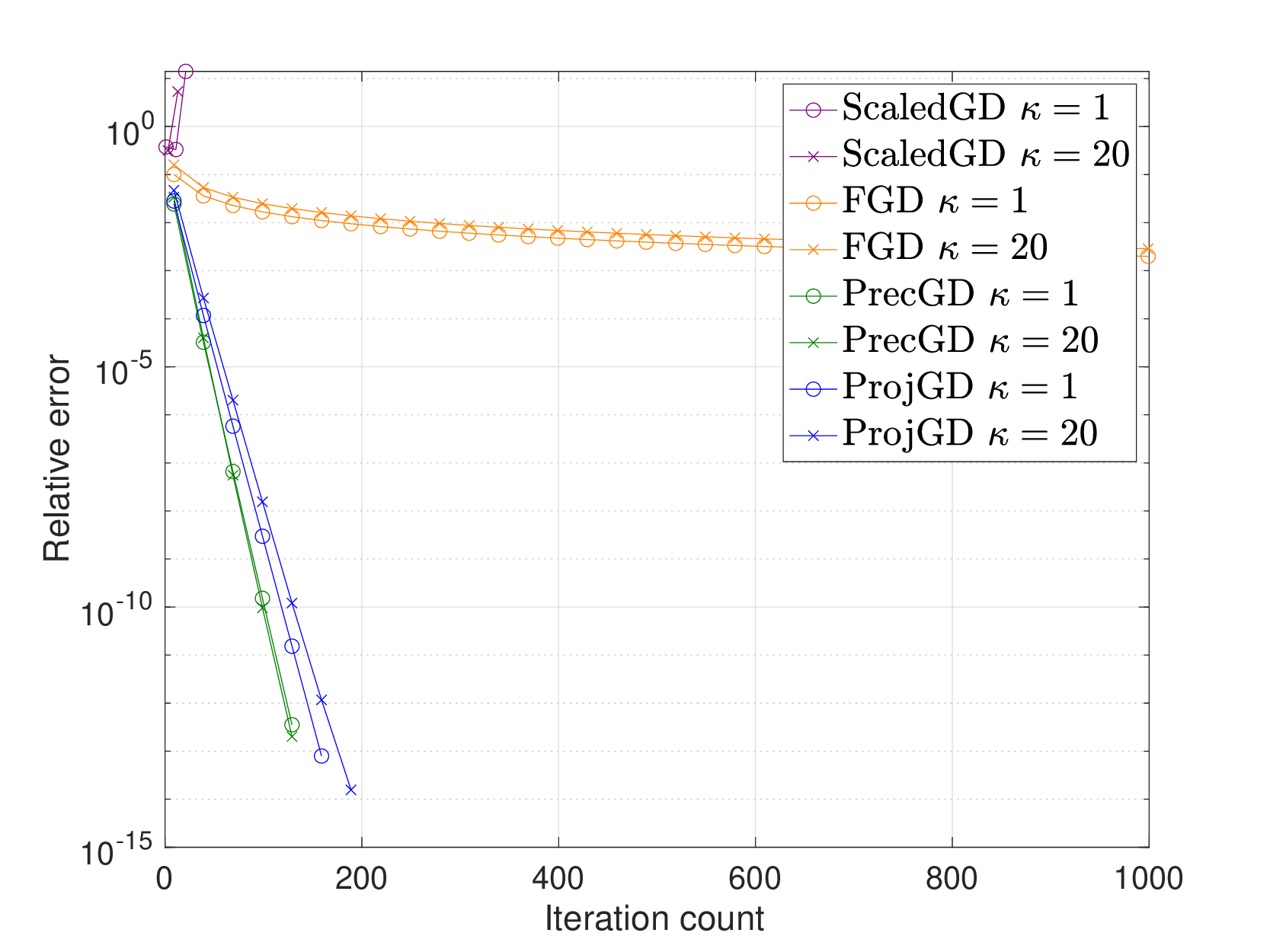} }
    \caption{Comparison of ProjGD, FGD, ScaledGD, and PrecGD algorithms for the estimation of positive semidefinite matrices. Identical step sizes ($\eta = 0.4$ in the first row and $\eta = 0.6$ in the second row) were employed for all three algorithms, with matrix dimensions set to $n = 10$ and ranks of $r_* = 4$ or $r_* = 2$.} 
    \label{fig:symmetric}
    \end{center}
\end{figure}


Attentive readers may question the robustness of our comparisons concerning the sensitivity to the chosen step sizes. To address this, we illustrate the convergence speeds of ProjGD, ScaledGD, and FGD under different step sizes $\eta$ (under the first setting as shown in Figure~\ref{fig:nonsymmetric}(a), with a larger number of observations $m=10nr$).
We execute all algorithms for $80$ iterations, ceasing operation if the relative error exceeds $10^2$ but remains below $10^{-14}$. This scenario arises when the step size is excessively large, leading to algorithm divergence.
Figure~\ref{fig:stepsize} plots the relative error with respect to the step size $\eta$ for ProjGD, ScaledGD, and FGD, which shows that ProjGD works well for a large range of step sizes. In particular, when $\eta<0.55$, ProjGD has a similar performance as ScaledGD; and when $0.55<\eta<0.9$, ProjGD still converges while the other two methods diverge. 
Therefore, our selection of step sizes in previous experiments provides a standard basis for comparing all algorithms. 

\begin{figure}
  \centering
  \includegraphics[width=0.5\linewidth]{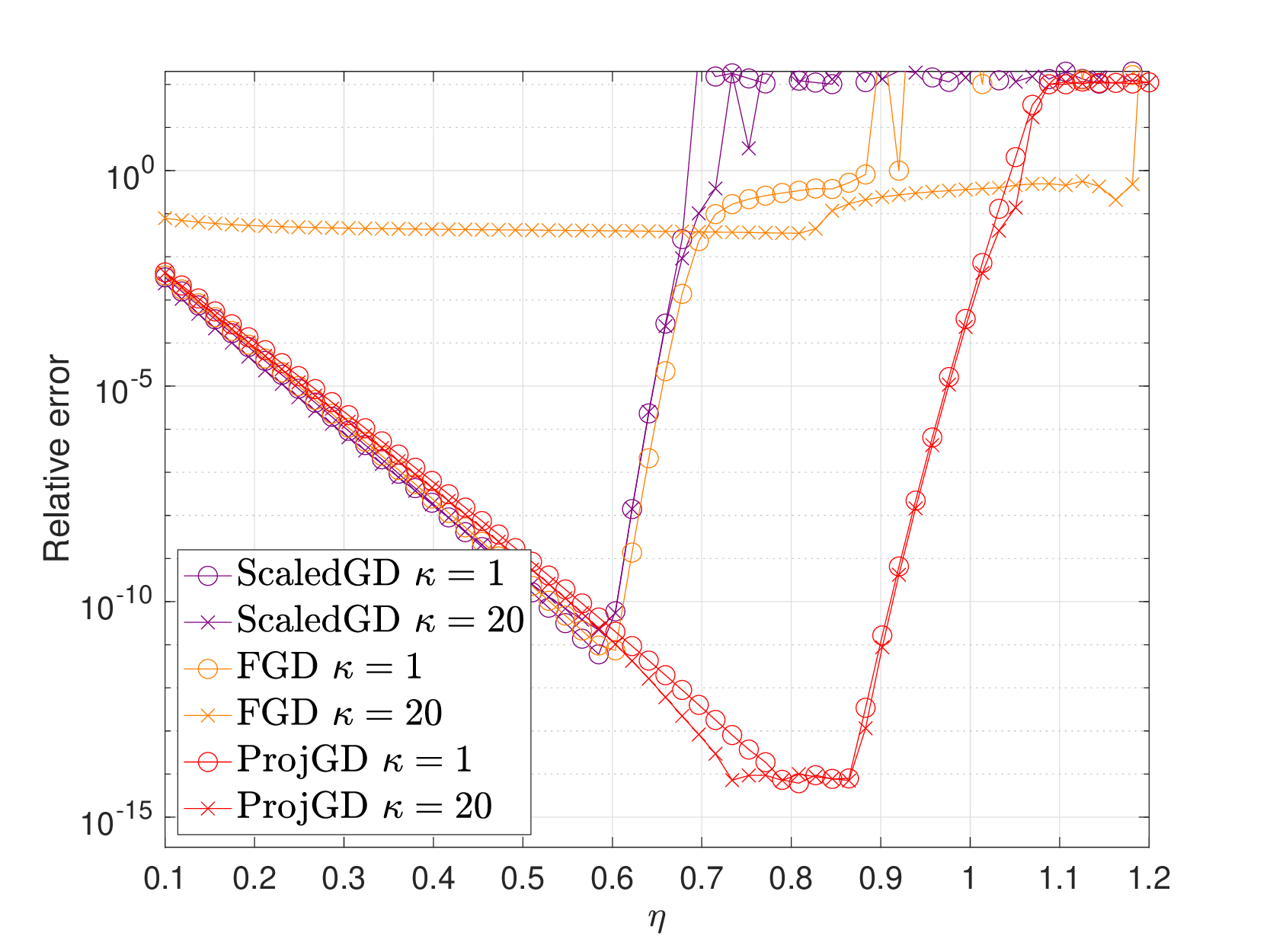}
  \caption{The relative errors of ProjGD, ScaledGD and FGD after 80 iterations with respect to different step sizes $\eta$ from $0.1$ to $1.2$. under different condition numbers $\kappa = 1, 20$ for matrix sensing with $n = 10$, $r=r_*=4$, and $m=10nr$.}
  \label{fig:stepsize}
\end{figure}

\section{Conclusion}
In this work, we investigate the estimation of low-rank matrices employing projected gradient descent, demonstrating its theoretical superiority over factored gradient descent and its variants. As a corollary, we establish that low-rank estimation problems exhibit no local minimizers when the condition number of the objective function is less than $3$. Our future research will explore the non-asymptotic convergence rate and the extension of our anslysis to the estimation of low-rank positive semi-definite matrices.





\section{Appendix}\label{sec:proof} 
\subsection{Sketch of Proof of Theorem~\ref{thm:local}}
We first present a few auxiliary lemmas, with their proofs deferred.
The first lemma shows that the functional value decreases with each iteration, with the amount of the decrease depending on the changes in the estimation.
\begin{lem}[Decrease in functional value]\label{lemma:descent2}
Let $\bX^+=\Pr(\bX-\eta \grad f(\bX))$, then 
\begin{equation}\label{eq:descent2}
f(\bX)-f(\bX^+)\geq \frac{1}{2}\Big(\frac{1}{\eta}- L\Big)\|\bX-\bX^+\|_F^2.
\end{equation}
\end{lem}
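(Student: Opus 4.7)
The plan is to prove this by the classical two-step argument for projected gradient descent: combine the quadratic upper bound coming from restricted $L$-smoothness with the variational characterization of the projection step. The inequality \eqref{eq:descent2} is essentially the standard ``sufficient decrease'' lemma, adapted to the fact that $\bX$ and $\bX^+$ are both rank-$r$ matrices (so that their difference has rank at most $2r$, matching the rank-$2r$ restricted smoothness in Assumption A1).

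First, I would invoke the restricted $L$-smoothness to obtain the quadratic upper bound
\[
f(\bX^+) \leq f(\bX) + \langle \grad f(\bX), \bX^+ - \bX\rangle + \frac{L}{2}\|\bX^+ - \bX\|_F^2.
\]
This is justified by integrating $\grad f$ along the segment from $\bX$ to $\bX^+$: since both endpoints are rank-$r$, all matrices on the segment have rank at most $2r$, and the Lipschitz property of $\grad f$ under Assumption A1 applies to the differences arising in the integral $\int_0^1\langle \grad f(\bX+t(\bX^+-\bX))-\grad f(\bX),\bX^+-\bX\rangle\,dt$.

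Second, I would use the definition $\bX^+ = \Pr(\bX - \eta \grad f(\bX))$, which minimizes the Frobenius distance to $\bX - \eta \grad f(\bX)$ over rank-$r$ matrices. Since $\bX$ is itself rank-$r$ and therefore a feasible competitor, this gives
\[
\|\bX^+ - (\bX - \eta \grad f(\bX))\|_F^2 \leq \|\bX - (\bX - \eta \grad f(\bX))\|_F^2 = \eta^2 \|\grad f(\bX)\|_F^2.
\]
Expanding the left-hand side and cancelling the common $\eta^2\|\grad f(\bX)\|_F^2$ term yields
\[
\|\bX^+ - \bX\|_F^2 + 2\eta\,\langle \grad f(\bX),\, \bX^+ - \bX\rangle \leq 0,
\]
and hence $\langle \grad f(\bX), \bX^+ - \bX\rangle \leq -\frac{1}{2\eta}\|\bX^+ - \bX\|_F^2$.

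Finally, substituting this bound into the smoothness inequality from the first step gives
\[
f(\bX^+) \leq f(\bX) - \frac{1}{2\eta}\|\bX^+ - \bX\|_F^2 + \frac{L}{2}\|\bX^+ - \bX\|_F^2 = f(\bX) - \frac{1}{2}\Big(\frac{1}{\eta}-L\Big)\|\bX^+ - \bX\|_F^2,
\]
which is exactly \eqref{eq:descent2} after rearrangement. There is no substantive obstacle in this proof; the only subtle point worth flagging is the restricted-smoothness step, where one must argue carefully that the relevant differences of gradients along the segment only ever involve matrices of rank at most $2r$, so that Assumption A1 is applicable in its restricted form.
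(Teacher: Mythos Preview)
Your proposal is correct and follows essentially the same route as the paper's proof: combine the quadratic upper bound from restricted $L$-smoothness with the inequality $\langle \eta\grad f(\bX),\bX-\bX^+\rangle\ge \tfrac12\|\bX-\bX^+\|_F^2$ obtained from the projection property of $\Pr$. The only cosmetic difference is that the paper writes the smoothness step via a double integral of the Hessian quadratic form rather than directly invoking the gradient-Lipschitz inequality, but this yields the same bound; your flag about the rank of matrices on the segment is exactly the point one has to check (and is handled by the ``rank-$2r$ restricted'' hypothesis in Assumption~A1).
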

Next, we show that the RHS of \eqref{eq:descent2} is bounded by the projection of $\eta \nabla f(\bX)$ to the subspace $T(\bX)$:
\begin{lem}[Lower bound of $\|\bX-\bX^+\|_F$]\label{lemma:projection}
 For any $\bX\in\reals^{n\times n}$ with $\rank(\bX)=r$ and $\bY\in\reals^{n\times n}$, we have
\begin{equation}\label{eq:projection2}
\|\Pr(\bY)-\bX\|_F\geq \frac{2}{3}\|P_{T(\bX)}(\bY)-\bX\|_F.
\end{equation}
\end{lem}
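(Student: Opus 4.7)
Set $\bW := \Pr(\bY)$, and first observe that $\bX \in T(\bX)$ (take $\bZ_1 = \bI$ and $\bZ_2 = 0$ in the definition of $T(\bX)$). Consequently the right-hand side of the claim simplifies as
\[
\|P_{T(\bX)}(\bY) - \bX\|_F = \|P_{T(\bX)}(\bY - \bX)\|_F =: \|\bD\|_F,
\]
and I aim to prove the equivalent form $\|\bD\|_F \leq \tfrac{3}{2}\|\bW - \bX\|_F$.

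The natural starting point is the identity obtained by writing $\bY - \bX = (\bY - \bW) + (\bW - \bX)$ and projecting onto $T(\bX)$,
\[
\bD = P_{T(\bX)}(\bY - \bW) + P_{T(\bX)}(\bW - \bX).
\]
By the triangle inequality together with the non-expansiveness of orthogonal projection (which gives $\|P_{T(\bX)}(\bW - \bX)\|_F \leq \|\bW - \bX\|_F$), it therefore suffices to establish the key inequality
\[
\|P_{T(\bX)}(\bY - \bW)\|_F \leq \tfrac{1}{2}\|\bW - \bX\|_F. \qquad (\star)
\]
Chaining this with the decomposition above yields $\|\bD\|_F \leq \tfrac{3}{2}\|\bW - \bX\|_F$, which is equivalent to the stated lemma.

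To prove $(\star)$, I plan to combine two ingredients. The first is the Eckart-Young optimality of $\bW$ against the competitor $\bX$: $\|\bY - \bW\|_F \leq \|\bY - \bX\|_F$. Squaring and using the orthogonal decomposition $\reals^{n\times n} = T(\bX) \oplus T(\bX)^\perp$ yields
\[
\|P_{T(\bX)}(\bY - \bW)\|_F^2 + \|\bY_{T^\perp} - \bW_\perp\|_F^2 \leq \|\bD\|_F^2 + \|\bY_{T^\perp}\|_F^2,
\]
where $\bY_{T^\perp} = P_{T(\bX)^\perp}(\bY)$ and $\bW_\perp = P_{T(\bX)^\perp}(\bW)$. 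The second ingredient is the first-order optimality of $\bW$ as the best rank-$r$ approximation of $\bY$: $\bU_\bW^T(\bY - \bW) = 0$ and $(\bY - \bW)\bV_\bW = 0$, so $\bY - \bW \in T(\bW)^\perp$. Applying the explicit formula $P_{T(\bX)}(\bZ) = \bU_\bX\bU_\bX^T\bZ + (\bI - \bU_\bX\bU_\bX^T)\bZ\bV_\bX\bV_\bX^T$ to $\bZ = \bY - \bW$, and using the preceding orthogonality relations together with the rank-$r$ constraint on $\bW$ (which, in the block decomposition induced by $(\bU_\bX, \bV_\bX)$, algebraically couples the tangent part $P_{T(\bX)}(\bW)$ to the normal part $\bW_\perp$), I expect to recover $(\star)$ with the absolute constant $\tfrac{1}{2}$.

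\textbf{Main obstacle.} The delicate step is deriving the constant $\tfrac{1}{2}$ in $(\star)$ without any dependence on $\sigma_r(\bX)$. A naive sin-$\Theta$/Wedin bound comparing $T(\bX)$ to $T(\bW)$ would introduce a factor of $1/\sigma_r$, making the resulting estimate useless in rank-deficient or ill-conditioned regimes; indeed the whole point of Lemma~\ref{lemma:projection}, and of its application to Lemma~\ref{lemma:descent2}, is to give a $\kappa(\bX_*)$-independent descent guarantee. The resolution must exploit the Eckart-Young inequality $\|\bY - \bW\|_F \leq \|\bY - \bX\|_F$ jointly with the rank constraint $\rank(\tilde\bW + \bW_\perp) \leq r$, which forces a cancellation between the tangent and normal components of $\bW$ that removes the naive $\sigma_r$ dependence.
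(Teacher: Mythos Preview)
Your overall architecture matches the paper's: decompose $P_{T(\bX)}(\bY)-\bX$ into $P_{T(\bX)}(\bW-\bX)+P_{T(\bX)}(\bY-\bW)$, bound the first piece by $\|\bW-\bX\|_F$ via non-expansiveness, and reduce everything to the key inequality $(\star)$. But the proposal has a real gap at $(\star)$: the two ingredients you list do not combine to give it. Your squared Eckart--Young inequality, once decomposed along $T(\bX)\oplus T(\bX)^\perp$, bounds $\|P_{T(\bX)}(\bY-\bW)\|_F$ in terms of $\|\bD\|_F$---the very quantity you are ultimately trying to control---so the argument is circular as written. And the first-order condition $\bY-\bW\in T(\bW)^\perp$, while correct, by itself only yields subspace-angle control that reintroduces the $\sigma_r^{-1}$ factor you correctly flag as fatal. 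You identify the obstacle precisely but do not supply the mechanism that resolves it; ``I expect to recover $(\star)$'' is where the proof still needs to happen.

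The paper's mechanism at this step is different from what you sketch. It does not use the Eckart--Young comparison of $\bW$ against the competitor $\bX$; instead it exploits the singular-value ordering $\sigma_r(\bW)\geq\sigma_1(\bY-\bW)$ together with the fact that $\bW$ and $\bY-\bW$ have complementary column and row spaces, and invokes a separate auxiliary result (Lemma~\ref{lemma:auxillary}) comparing $\|\bZ-[\bZ]_{\bU,\bV}\|_F$ over matrices $\bZ$ with fixed column/row spaces but singular values constrained above versus below a threshold. That auxiliary lemma is proved by reducing to diagonal form and computing with the principal angles between $(\bU_\bW,\bV_\bW)$ and $(\bU_\bX,\bV_\bX)$; this explicit computation is the ``cancellation'' you allude to, and it does not follow from Eckart--Young or the first-order KKT condition alone. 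The paper also first establishes $(\star)$ under the additional assumption $[\bW]_{\bU_\bX,\bV_\bX}=\bX$ and only afterwards removes it by introducing $\bX'=[\bW]_{\bU_\bX,\bV_\bX}$ and a Pythagorean argument---a two-step reduction that is absent from your plan.
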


Third, we show that the direction $\bX-\bX_*$ has a large correlation with the subspace $T(\bX)$, if $\bX$ lies in a small neighbor of $\bX_*$.
\begin{lem}[Local approximation by tangent space]\label{lemma:local}
When $\rank(\bX_*)=r_*$ with $r\geq r_*$, then for $\bX$ such that $\|\bX-\bX_*\|_F<c_0\sigma_{r_*}(\bX_*)$, $\sin\angle(\bX-\bX_*, T(\bX))\leq \frac{c_0}{1-c_0}$.
\end{lem}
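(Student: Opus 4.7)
\textbf{Proof plan for Lemma~\ref{lemma:local}.} The plan is to reduce the sine bound to an estimate on $\|P_{T(\bX)^\perp}(\bX - \bX_*)\|_F$, and to split this projection using the SVD of $\bX_*$ so that the potentially large top singular value $\sigma_1(\bX_*)$ never enters. Setting $\bDelta = \bX - \bX_*$, $\bP_U = \bU_\bX^\perp(\bU_\bX^\perp)^T$, and $\bP_V = \bV_\bX^\perp(\bV_\bX^\perp)^T$, the tangent-projector formula from Section~\ref{sec:background} gives $P_{T(\bX)^\perp}(\bZ) = \bP_U \bZ \bP_V$. Since $\bP_U \bX = 0$ and $\bX \bP_V = 0$, one has $P_{T(\bX)^\perp}(\bDelta) = -\bP_U \bX_* \bP_V$, so the task reduces to showing $\|\bP_U \bX_* \bP_V\|_F \leq c_0 \|\bDelta\|_F$, which is slightly sharper than the stated $c_0/(1-c_0)$.

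The main step is to exploit $r \geq r_*$: this forces $\Sp(\bU_{\bX_*})$ to lie approximately inside $\Sp(\bU_\bX)$, and symmetrically for $\bV$. Writing the thin SVD $\bX_* = \bU_{\bX_*} \bSigma_{\bX_*} \bV_{\bX_*}^T$ with $\bSigma_{\bX_*} \in \reals^{r_* \times r_*}$ invertible, the chain $\bP_U \bU_{\bX_*} \bSigma_{\bX_*} = \bP_U \bX_* \bV_{\bX_*} = -\bP_U \bDelta \bV_{\bX_*}$ yields, after right-multiplying by $\bSigma_{\bX_*}^{-1}$ and taking operator norms, $\|\bP_U \bU_{\bX_*}\| \leq \|\bDelta\|_F / \sigma_{r_*}(\bX_*) < c_0$. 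Factorizing $\bP_U \bX_* \bP_V = (\bP_U \bU_{\bX_*}) \bSigma_{\bX_*} (\bV_{\bX_*}^T \bP_V)$, I would then chain
\[
\|\bP_U \bX_* \bP_V\|_F \leq \|\bP_U \bU_{\bX_*}\| \cdot \|\bSigma_{\bX_*} \bV_{\bX_*}^T \bP_V\|_F = \|\bP_U \bU_{\bX_*}\| \cdot \|\bX_* \bP_V\|_F \leq c_0 \|\bDelta\|_F,
\]
where the middle equality uses $\|\bU_{\bX_*} \bM\|_F = \|\bM\|_F$ for any $\bM$ and the last inequality uses $\bX_* \bP_V = -\bDelta \bP_V$ together with $\|\bP_V\| \leq 1$.

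The main obstacle is resisting the temptation to bound things via $\|\bSigma_{\bX_*}\| = \sigma_1(\bX_*)$, which would reintroduce the effective condition number $\kappa(\bX_*)$ and defeat the whole point of a condition-number-free analysis. The key trick is to absorb $\bSigma_{\bX_*}$ through the identities $\bP_U \bX_* = -\bP_U \bDelta$ and $\bX_* \bP_V = -\bDelta \bP_V$, so that only $\|\bDelta\|_F$ and the inverse spectral gap $1/\sigma_{r_*}(\bX_*)$ enter the final estimate -- which is exactly what powers the condition-number-independent rate claimed in Theorem~\ref{thm:local}.
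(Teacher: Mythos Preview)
Your argument is correct and in fact yields the sharper bound $\sin\angle(\bX-\bX_*,T(\bX))\leq c_0$, which trivially implies the stated $c_0/(1-c_0)$. Every step checks out: the identities $\bP_U\bX_*=-\bP_U\bDelta$ and $\bX_*\bP_V=-\bDelta\bP_V$ are exactly what let you trade $\bSigma_{\bX_*}$ away and keep the estimate condition-number-free.

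Your route differs somewhat from the paper's. The paper works in the block decomposition of $\bX_*$ relative to $(\bU_\bX,\bU_\bX^\perp)$ and $(\bV_\bX,\bV_\bX^\perp)$ and invokes the Schur-complement identity
\[
[\bX_*]_{\bU_\bX^\perp,\bV_\bX^\perp}=[\bX_*]_{\bU_\bX^\perp,\bV_\bX}\bigl([\bX_*]_{\bU_\bX,\bV_\bX}\bigr)^{-1}[\bX_*]_{\bU_\bX,\bV_\bX^\perp},
\]
then bounds the inverse via $\sigma_{\min}\bigl([\bX_*]_{\bU_\bX,\bV_\bX}\bigr)\geq \sigma_{r_*}(\bX_*)-\|\bDelta\|_F$, picking up the extra $(1-c_0)^{-1}$ in the denominator. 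You instead factor through the thin SVD of $\bX_*$ directly, writing $\bP_U\bX_*\bP_V=(\bP_U\bU_{\bX_*})(\bSigma_{\bX_*}\bV_{\bX_*}^T\bP_V)$ and bounding each factor separately. This buys you two things: the constant improves from $c_0/(1-c_0)$ to $c_0$, and the argument is transparently valid in the rank-deficient case $r_*<r$, where the paper's $r\times r$ block $[\bX_*]_{\bU_\bX,\bV_\bX}$ need not be invertible and the Schur identity requires a pseudoinverse interpretation. Both approaches encode the same geometric fact---that $\Sp(\bU_{\bX_*})$ is nearly contained in $\Sp(\bU_\bX)$---but yours does so more cleanly.
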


At last, we present a technical lemma that is used in the proof of Lemma~\ref{lemma:projection}.

\begin{lem}\label{lemma:auxillary}
Let $\bX\in\reals^{2n\times 2n}$ with rank $n$ with $\bU_{\bX}\bU_{\bX}^T$ and $\bV_{\bX}\bV_{\bX}^T$ fixed, $\bU,\bV\in\reals^{2n\times n}$ being orthogonal matrices, then
\[
\argmin_{\sigma_n(\bX)\geq 1}\|\bX-[\bX]_{\bU,\bV}\|_F\geq \frac{1}{2}\argmax_{\sigma_1(\bX)\leq 1}\|\bX-[\bX]_{\bU,\bV}\|_F
\]
\end{lem}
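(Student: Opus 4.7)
The plan is to reduce the inequality to a clean algebraic comparison via change of basis, split the squared Frobenius distance into two Pythagorean pieces, bound the right-hand maximum by a permutation formula, and bound the left-hand minimum by a PSD trace argument; the final comparison is then an elementary per-index inequality which in fact yields the stronger ratio $1/\sqrt{2}$ on the Frobenius norms.

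Reparametrize $\bX = \bU_{\bX}\bT\bV_{\bX}^T$, so that the singular-value constraints on $\bX$ become the same constraints on $\bT$ and Frobenius norms are preserved. Set $\bA = \bU^T\bU_{\bX}$, $\bB = \bV^T\bV_{\bX}$, $\bA^\perp = \bU^{\perp T}\bU_{\bX}$, $\bB^\perp = \bV^{\perp T}\bV_{\bX}$; conjugating $\bU\bU^T + \bU^\perp\bU^{\perp T} = \bI_{2n}$ by the isometry $\bU_{\bX}$ gives $\bA^T\bA + \bA^{\perp T}\bA^\perp = \bI_n$, and similarly for $\bB$. After rotating bases within the fixed column and row spaces of $\bX$ and choosing $\bU^\perp,\bV^\perp$ appropriately, I may assume $\bA = \diag(\alpha_i)$, $\bA^\perp = \diag(\sqrt{1-\alpha_i^2})$, $\bB = \diag(\beta_j)$, and $\bB^\perp = \diag(\sqrt{1-\beta_j^2})$ with $\alpha_i,\beta_j\in[0,1]$. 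Decomposing $\bX = \bU\bU^T\bX + \bU^\perp\bU^{\perp T}\bX$ and further splitting the first piece on the right via the analogous identity for $\bV$, the increment becomes $\bX - [\bX]_{\bU,\bV} = \bU^\perp\bU^{\perp T}\bX + \bU\bU^T\bX\bV^\perp\bV^{\perp T}$, whose two summands are Frobenius-orthogonal, and a short calculation yields
\[
\|\bX - [\bX]_{\bU,\bV}\|_F^2 = \|\bA^\perp\bT\|_F^2 + \|\bA\bT\bB^{\perp T}\|_F^2 = \sum_{ij}(1 - \alpha_i^2\beta_j^2)\, t_{ij}^2.
\]

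The right-hand quadratic is convex in $\bT$ on the unit operator-norm ball, so its maximum on $\{\sigma_1(\bT)\le 1\}$ is attained at an orthogonal $\bT$; for such $\bT$ the array $(t_{ij}^2)$ is doubly stochastic and by Birkhoff the max equals $\max_\pi \sum_i (1 - \alpha_i^2 \beta_{\pi(i)}^2)$, which after relabelling indices I may take to be attained at the identity permutation. For the minimum on $\{\sigma_n(\bT)\ge 1\}$, combining $\bT^T\bT \succeq \bI$ with $\bA^T\bA + \bA^{\perp T}\bA^\perp = \bI$ yields $\bT^T\bA^T\bA\bT \succeq \bI - \bT^T\bA^{\perp T}\bA^\perp\bT$; taking the trace against the PSD matrix $\bB^{\perp T}\bB^\perp$ and using $\|\bA^\perp\bT\bB^{\perp T}\|_F \le \|\bA^\perp\bT\|_F$ (because the operator norm of $\bB^{\perp T}$ is at most $1$), the cross term exactly cancels against the other Pythagorean piece $\|\bA^\perp\bT\|_F^2$, giving $\|\bX - [\bX]_{\bU,\bV}\|_F^2 \ge \sum_j (1-\beta_j^2)$. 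The symmetric argument with $\bA\leftrightarrow\bB$ then gives $\|\bX - [\bX]_{\bU,\bV}\|_F^2 \ge \sum_i (1-\alpha_i^2)$ as well.

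Finally, the trivial identity $1 - \alpha_i^2\beta_i^2 = (1-\alpha_i^2) + \alpha_i^2(1-\beta_i^2) \le (1-\alpha_i^2) + (1-\beta_i^2)$, summed over $i$, shows that the maximum above is at most $\sum_i(1-\alpha_i^2) + \sum_j(1-\beta_j^2) \le 2\max\bigl(\sum_i(1-\alpha_i^2),\sum_j(1-\beta_j^2)\bigr)$, which is in turn at most twice the minimum of the squared distance; taking square roots yields the factor $1/\sqrt{2}\ge 1/2$. The main obstacle is the PSD trace manipulation in the previous paragraph: one has to combine $\bT^T\bT \succeq \bI$ with the Pythagorean identity $\bA^T\bA + \bA^{\perp T}\bA^\perp = \bI$ in exactly the right way so that the cross term $\|\bA^\perp\bT\bB^{\perp T}\|_F^2$ arising when lower-bounding $\|\bA\bT\bB^{\perp T}\|_F^2$ is absorbed by the other Pythagorean piece $\|\bA^\perp\bT\|_F^2$, producing a clean lower bound on $\|\bX - [\bX]_{\bU,\bV}\|_F^2$ that depends only on the $\beta_j$'s (and, by symmetry, only on the $\alpha_i$'s).
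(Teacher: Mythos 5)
Your proof is correct, and it takes a genuinely different route from the paper after the common reduction step. Both arguments begin the same way: reparametrize $\bX=\bU_{\bX}\bT\bV_{\bX}^T$, diagonalize $\bA=\bU^T\bU_{\bX}$ and $\bB=\bV^T\bV_{\bX}$, and express $\|\bX-[\bX]_{\bU,\bV}\|_F^2$ as a weighted quadratic form in the entries of $\bT$ (you correctly get weights $1-\alpha_i^2\beta_j^2$; the paper's statement $\Sigma_{ij}=1-\sigma^{(1)}_i\sigma^{(2)}_j$ is missing the squares, though the subsequent estimates go through in the same spirit). From there the paper bounds each weight entrywise, $\tfrac{1}{2}(\tau_i^2+\tau_j^2)\le\Sigma_{ij}^2\le 2(\tau_i^2+\tau_j^2)$, and uses that the rows and columns of $\bT$ have norm $\ge 1$ (resp.\ sums to $1$ for orthogonal $\bT$) to marginalize, producing the ratio $\tfrac{1}{4}$ on squares and hence $\tfrac{1}{2}$ on Frobenius norms. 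You instead treat the two sides asymmetrically: the maximum over $\{\sigma_1(\bT)\le 1\}$ is handled by convexity plus Birkhoff--von Neumann, reducing to a permutation $\pi$ and the elementary bound $\sum_i(1-\alpha_i^2\beta_{\pi(i)}^2)\le\sum_i(1-\alpha_i^2)+\sum_j(1-\beta_j^2)$; the minimum over $\{\sigma_n(\bT)\ge 1\}$ is handled by the PSD trace argument, which directly yields $\|\bX-[\bX]_{\bU,\bV}\|_F^2\ge\max\bigl(\sum_i(1-\alpha_i^2),\sum_j(1-\beta_j^2)\bigr)$ for \emph{any} $\bT$ with $\bT^T\bT\succeq\bI$. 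This buys you two things. First, the constant improves to $1/\sqrt{2}$ rather than $1/2$. Second, your lower bound does not require arguing that the minimizer lies on the boundary $\sigma_n(\bT)=1$; the paper asserts this as an ``achieved at the boundary'' claim, but $\{\sigma_n(\bT)\ge 1\}$ is not convex, and while the paper's entrywise bound can in fact be salvaged because $\bT^T\bT\succeq\bI$ forces all row and column norms of $\bT$ to be at least $1$, your trace identity makes this completely transparent. The one place to be slightly careful is the relabelling remark: the final per-index inequality $1-\alpha_i^2\beta_{\pi(i)}^2\le(1-\alpha_i^2)+(1-\beta_{\pi(i)}^2)$ already holds for an arbitrary permutation $\pi$, so the relabelling is in fact unnecessary and can be dropped.
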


To prove Theorem~\ref{thm:local}, we first prove that when $f(\bX)-f(\bX_*)\leq 0.01\sigma_{r_*}(\bX_*)^2\mu/\kappa_f$, then 
\begin{equation}f(\bX)-f(\bX_+)\geq c\Big(\eta_T-\eta_T^2\Big)\frac{\mu}{L}\Big(f(\bX)-f(\bX_*)\Big)
,\label{eq:local_proof}\end{equation}
where $\bX_+=\Pr\big(\bX-\eta\nabla f(\bX)\big)$.\\

\textbf{Step 1: Proof of \eqref{eq:local_proof}}. We first note that Assumption A1 implies that $f(\bX)-f(\bX_*)\geq \frac{\mu}{2}\|\bX-\bX_*\|_F^2 $, we have \begin{equation}\label{eq:proof_local6}\|\bX-\bX_*\|_F\leq \sqrt{2(f(\bX)-f(\bX_*))/\mu}\leq \frac{1}{6}\sqrt{\frac{\mu}{L}}\,\sigma_{r_*}(\bX_*),\end{equation} as a result, Lemma~\ref{lemma:local} can be applied.

By the Assumption A1 and 
the estimation that
\[
\grad f(\bX) - \grad f(\bX_*)= \int_{t=0}^1 \grad^2f(\bX+t\bDelta)[\bDelta]\di t
\]
where $\bDelta=\bX-\bX_*$. Defining $P_{\bX}\bZ = \frac{\langle\bX,\bZ\rangle}{\|\bX\|_F^2}\bX$ as the projection of $\bZ$ to the one-dimensional subspace spanned by $\bX$, then by Assumption A1, we have
\begin{equation}\label{eq:proof_local1}
f(\bX)-f(\bX_*)\leq \frac{\|P_{\bX-\bX_*}\nabla f(\bX)\|_F^2}{2\mu},
\end{equation}
which can be proved by investigating $f$ restricted to the line connecting $\bX$ and $\bX_*$.
Similarly, we have
\begin{equation}\label{eq:proof_local2}
f(\bX)-f(\bX_*)\geq \frac{\|\nabla f(\bX)\|_F^2}{2L},
\end{equation}

Now let us investigate $\|P_{T(\bX)}\nabla f(\bX)\|_F$. Decompose $\nabla f(\bX)=\bZ_1+\bZ_2$, where $\bZ_1=P_{\bX-\bX_*}\nabla f(\bX)$ is the projection to the direction $\bX-\bX_*$ and $\bZ_2$ is the reminder. In addition, let $\bY=P_{T(\bX)}(\bX-\bX_*)$, then Lemma~\ref{lemma:local} implies that $\sin\angle(\bY,\bX-\bX_*)<\|\bX-\bX_*\|_F/\sigma_{r_*}(\bX_*)$. As $\bZ_2$ is orthogonal to $\bX-\bX_*$, we have
\begin{equation}\label{eq:proof_local3}
\|P_{\bY}\bZ_2\|_F\leq \frac{\|\bX-\bX_*\|_F}{\sigma_{r_*}(\bX_*)}\|\bZ_2\|_F\leq \frac{\|\bX-\bX_*\|_F}{\sigma_{r_*}(\bX_*)}\sqrt{2L(f(\bX)-f(\bX_*))},
\end{equation}
where the last step follows from \eqref{eq:proof_local2}. In addition, 
\begin{equation}\label{eq:proof_local4}
\|P_{\bY}\bZ_1\|_F\geq \frac{1}{2}\|\bZ_1\|_F\geq \frac{1}{2}\sqrt{2\mu(f(\bX)-f(\bX_*))},
\end{equation}
where the last step follows from \eqref{eq:proof_local1}. Combining \eqref{eq:proof_local3} and \eqref{eq:proof_local4}, we have
\begin{align}\label{eq:proof_local5}
&\|P_{T(\bX)}\nabla f(\bX)\|_F\geq \|P_{\bY}(\bZ_1+\bZ_2)\|_F\geq \|P_{\bY}\bZ_1\|_F-\|P_{\bY}\bZ_2\|_F\\\geq& {\sqrt{2(f(\bX)-f(\bX_*))}}\Big(\frac{\sqrt{\mu}}{2}-\frac{\|\bX-\bX_*\|_F}{\sigma_{r_*}(\bX_*)}\sqrt{L}\Big).\nonumber
\end{align}

Combining the Lemmas and the estimations above, we have
\begin{align}\label{eq:proof_local}
&f(\bX)-f(\bX^+)\geq  \frac{1}{2}\Big(\frac{1}{\eta}- L\Big)\|\bX-\bX^+\|_F^2\geq  \frac{2}{9}\Big(\frac{1}{\eta}- L\Big)\eta^2\|P_{T(\bX)}\nabla f(\bX)\|_F^2\\\geq & \frac{2}{9}\Big(\frac{1}{\eta}- L\Big)\eta^2{{2(f(\bX)-f(\bX_*))}}\Big(\frac{\sqrt{\mu}}{2}-\frac{\|\bX-\bX_*\|_F}{\sigma_{r_*}(\bX_*)}\sqrt{L}\Big)^2\nonumber\\\geq & \frac{2}{9}\Big(\frac{1}{\eta}- L\Big)\eta^2{{2(f(\bX)-f(\bX_*))}}\Big(\frac{\sqrt{\mu}}{3}\Big)^2\nonumber\\\nonumber
=&  \frac{4\mu}{27} \Big(\frac{1}{\eta}- L\Big)\eta^2{{(f(\bX)-f(\bX_*))}}
\end{align}
where the first inequality follows from Lemma~\ref{lemma:descent2}, the second inequality follows from Lemma~\ref{lemma:projection}, the third inequality follows from \eqref{eq:proof_local5}, and the last inequality follows from \eqref{eq:proof_local6}.


\textbf{Step 2: Proof of Theorem}. It is sufficient to show that \eqref{eq:local_proof} holds over each iteration where $\bX$ and $\bX_*$ are  replaced with $\bX^{(k)}$ and $\bX^{(k+1)}$. The proof is based on induction: assume that \eqref{eq:local_proof} holds when $\bX$ is replaced with $\bX^{(0)},\cdots,\bX^{(k-1)}$, then we have $f(\bX^{(k)})\leq f(\bX^{(0)})$ as this assumption means that the objective value is nonincreasing in the first $k$ iterations. As a result, the assumption and the proof of \eqref{eq:local_proof} still hold when $\bX$ and $\bX_*$ are replaced with $\bX^{(k)}$ and $\bX^{(k+1)}$. As a result,  \eqref{eq:local_proof} holds for all iterations and the Theorem is proved.


\subsubsection{Proof of Lemmas}
\begin{proof}[Proof of Lemma~\ref{lemma:descent2}]
Note that the fundamental theorem of calculus implies
\[
f(\bX)-f(\bX^+)-\langle \nabla f(\bX), \bX-\bX^+\rangle + 
\int_{t=0}^{1}\langle\nabla f(\bX(t))-\nabla f(\bX),\bX-\bX_+\rangle\di t = 0
\]
where
\[
\langle\nabla f(\bX(t))-\nabla f(\bX),\bX-\bX_+\rangle =\int_{u=0}^{t}\langle\nabla^2f(\bX(u))[\bX-\bX_+],\bX-\bX_+\rangle\di u.
\]
Since $\|\bX^+-(\bX-\eta \nabla f(\bX))\|_F\leq \|\bX-(\bX-\eta \nabla f(\bX))\|_F$, we have
\begin{equation}\label{eq:descent1}
\langle \eta\nabla f(\bX), \bX-\bX^+\rangle \geq \frac{1}{2}\|\bX-\bX^+\|_F^2.
\end{equation}
Combining it with the property \eqref{eq:assumption1} that
\[
\Big|\langle\nabla^2f(\bX(u))[\bX-\bX_+],\bX-\bX_+\rangle \Big|\leq L\|\bX-\bX^+\|_F^2,
\]
we have \eqref{eq:descent2} and Lemma~\ref{lemma:descent2} is proved.
\end{proof}

\begin{proof}[Proof of Lemma~\ref{lemma:projection}]

First, we will prove Lemma~\ref{lemma:projection} for the setting
\begin{equation}\label{eq:projection_assumption}
[\Pr(\bY)]_{\bU_{\bX},\bV_{\bX}}=\bX.
\end{equation}
Then 
\begin{equation}\label{eq:auxillary1}
\Pr(\bY)-\bX=\Pr(\bY)-[\Pr(\bY)]_{\bU_{\bX},\bV_{\bX}},
\end{equation}
and
\begin{align}\nonumber
\|P_{T(\bX)}(\bY)-\bX\|_F\leq& \|P_{T(\bX)}(\Pr(\bY))-\bX\|_F+\|P_{T(\bX)}(\bY-\Pr(\bY))\|_F\\=&\|P_{T(\bX)}(\Pr(\bY)-\bX)\|_F+\|(\bY-\Pr(\bY))-[(\bY-\Pr(\bY))]_{\bU_{\bX,\perp},\bV_{\bX,\perp}}\|_F.\label{eq:auxillary2}
\end{align}

Note that $\bU_{\Pr(\bY),\perp}=\bU_{\bY-\Pr(\bY)}$ and $\bV_{\Pr(\bY),\perp}=\bV_{\bY-\Pr(\bY)}$, so Lemma~\ref{lemma:auxillary} can be applied to show that 
\begin{equation}\label{eq:auxillary3}
\|(\bY-\Pr(\bY))-[(\bY-\Pr(\bY))]_{\bU_{\bX,\perp},\bV_{\bX,\perp}}\|_F\leq \frac{1}{2}\|\Pr(\bY)-[\Pr(\bY)]_{\bU_{\bX},\bV_{\bX}}\|_F=\frac{1}{2}\|\Pr(\bY)-\bX\|_F,
\end{equation}
where the last step follows form \eqref{eq:auxillary1}.

On the other hand, we have
\[
\|P_{T(\bX)}(\Pr(\bY)-\bX)\|_F\leq \|\Pr(\bY)-\bX)\|_F.
\]
Combining it with \eqref{eq:auxillary2} and \eqref{eq:auxillary3}, Lemma~\ref{lemma:projection} is proved under assumption \eqref{eq:projection_assumption}.

It remains to prove Lemma~\ref{lemma:projection} without assumption \eqref{eq:projection_assumption}. Let  $\bX'$ be defined by
\[
\bX'=[\Pr(\bY)]_{\bU_{\bX},\bV_{\bX}},
\]
then from the previous analysis we have
\begin{equation}\label{eq:projection5}
\|\Pr(\bY)-\bX'\|_F\geq \frac{2}{3}\|P_{T(\bX)}(\bY)-\bX'\|_F.
\end{equation}

Combining it with
\[
\|\Pr(\bY)-\bX\|_F^2=\|\Pr(\bY)-\bX'\|_F^2+\|\bX-\bX'\|_F^2\geq (\|\Pr(\bY)-\bX'\|_F+\|\bX-\bX'\|_F)^2/2,
\]
and
\[
\|P_{T(\bX)}(\bY)-\bX\|_F\leq \|P_{T(\bX)}(\bY)-\bX'\|_F+\|\bX-\bX'\|_F,
\]
equation \eqref{eq:projection2} and Lemma~\ref{lemma:projection} are  proved.
\end{proof}

\begin{proof}[Proof of Lemma~\ref{lemma:auxillary}]
Let $\sigma^{(1)}_i$ be the singular values of $\bU^T\bU_{\bX}$ and $\sigma^{(2)}_i$ be the singular values of $\bV^T\bV_{\bX}$, then it is equivalent to prove that for $\Sigma_{ij}=1-\sigma^{(1)}_i\sigma^{(2)}_j$,
\begin{equation}\label{eq:lemmacompare}
\argmin_{\sigma_n(\bX)\geq 1}\|\bX\circ \Sigma\|_F\geq c\argmax_{\sigma_1(\bX)\leq 1}\|\bX\circ \Sigma\|_F.
\end{equation}
 Note that the minimizers and the maximizers of  \eqref{eq:lemmacompare} are achieved at the boundary of the constraint set, i.e., when $\bX$ is an orthogonal matrix, that is
\begin{equation}\label{eq:lemmacompare1}
\argmin_{\sigma_n(\bX)\geq 1}\|\bX\circ \Sigma\|_F = \argmin_{\bX\bX^T=\bI}\|\bX\circ \Sigma\|_F,\,\,\,\,\,\,\argmax_{\sigma_1(\bX)\leq 1}\|\bX\circ \Sigma\|_F=\argmax_{\bX\bX^T=\bI}\|\bX\circ \Sigma\|_F.
\end{equation}

Let $\tau^{(k)}_i=1-\sigma^{(k)}_i$, then $(\tau^{(1)\,2}_i+\tau^{(2\,2)}_j)/2\leq \Sigma_{ij}^2\leq 2(\tau^{(1)\,2}_i+\tau^{(2\,2)}_j)$, and we have
\[
\|\bX\circ \Sigma\|_F^2\leq 2\sum_{i,j}\bX_{ij}^2(\tau^{(1)\,2}_i+\tau^{(2\,2)}_j)=2\sum_{i}\tau^{(1)\,2}_i+\sum_{j}\tau^{(2)\,2}_j
\]
and
\[
\|\bX\circ \Sigma\|_F^2\geq \frac{1}{2}\sum_{i,j}\bX_{ij}^2(\tau^{(1)\,2}_i+\tau^{(2\,2)}_j)=\geq \frac{1}{2}\sum_{i}\tau^{(1)\,2}_i+\sum_{j}\tau^{(2)\,2}_j.
\]
Combing the estimations above with \eqref{eq:lemmacompare1}, the Lemma~\ref{lemma:auxillary} is proved.

\end{proof}

\begin{proof}[Proof of Lemma~\ref{lemma:local}]
Note that 
\begin{align*}
&\|P_{T(\bX)^\perp}(\bX-\bX_*)\|_F=\|[\bX-\bX_*]_{\bU_{\bX}^\perp,\bV_{\bX}^\perp}\|_F\\=&\|[\bX-\bX_*]_{\bU_{\bX}^\perp,\bV_{\bX}}(\bX-[\bX-\bX_*]_{\bU_{\bX},\bV_{\bX}})^{-1}[\bX-\bX_*]_{\bU_{\bX},\bV_{\bX}^\perp}\|_F\\
\leq &\frac{\|[\bX-\bX_*]\|_F^2}{\sigma_{r_*}(\bX)-\|[\bX-\bX_*\|_F}.
\end{align*}
Then the lemma is proved.

%
%
\end{proof}


\subsection{Proof of Theorem~\ref{thm:main1}}
Without loss of generality, let us assume $(L+\mu)/2=1$. Consequently, we have $\kappa_0=L-1=1-\mu$, and the condition $L/\mu< 3$ ensures that $\kappa_0\leq 1/2$.

Moreover, by selecting $\epsilon$ in part (b) to approach zero, we can ensure that $\hat{\kappa}_0$ closely approximates $\kappa_0$ and remains smaller than $\eta_0$. Hence, according to Theorem~\ref{thm:main1}(b), part (a) naturally follows. Subsequently, the remainder of the proof focuses on establishing the validity of part (b).

To prove part(b), we first present a few auxiliary lemmas, with their proofs deferred.

\begin{lem}[Bound on derivative]\label{lemma:bZ1}
\begin{equation}\label{eq:bZ11}
\|\grad f(\bX)-(\bX-\bX_*)\|_F\leq \kappa_0\|\bX-\bX_*\|_F.
\end{equation}
\end{lem}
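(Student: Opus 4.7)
\textbf{Proof proposal for Lemma~\ref{lemma:bZ1}.}
The plan is to write $\grad f(\bX)-(\bX-\bX_*)$ as an integral of the Hessian minus identity acting on $\bX-\bX_*$, and then use Assumption A1 together with the normalization $(L+\mu)/2=1$ to bound each integrand by $\kappa_0\|\bX-\bX_*\|_F$. First I would invoke Assumption A2 to get $\grad f(\bX_*)=0$, so that
\[
\grad f(\bX)-(\bX-\bX_*) \;=\; \bigl(\grad f(\bX)-\grad f(\bX_*)\bigr)-(\bX-\bX_*).
\]
Then, using the fundamental theorem of calculus along the segment $\bX_t:=\bX_*+t(\bX-\bX_*)$,
\[
\grad f(\bX)-\grad f(\bX_*) \;=\; \int_0^1 \grad^2 f(\bX_t)[\bX-\bX_*]\,\mathrm{d}t,
\]
so that
\[
\grad f(\bX)-(\bX-\bX_*) \;=\; \int_0^1 \bigl(\grad^2 f(\bX_t)-\mathrm{I}\bigr)[\bX-\bX_*]\,\mathrm{d}t.
\]

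Next, I would use Assumption A1 (the rank-$2r$ restricted smoothness/strong convexity) to show that, applied to the rank-$\le 2r$ direction $\bX-\bX_*$, the operator $\grad^2 f(\bX_t)-\mathrm{I}$ has quadratic form bounded between $\mu-1$ and $L-1$, i.e.\ between $-\kappa_0$ and $\kappa_0$, since under the normalization $(L+\mu)/2=1$ both $L-1$ and $1-\mu$ equal $\kappa_0=(L-\mu)/(L+\mu)$. From this spectral bound on the symmetric operator restricted to the relevant low-rank subspace, I would deduce $\|(\grad^2 f(\bX_t)-\mathrm{I})[\bX-\bX_*]\|_F\le \kappa_0\|\bX-\bX_*\|_F$. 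Finally, Minkowski's integral inequality on the Frobenius norm yields
\[
\|\grad f(\bX)-(\bX-\bX_*)\|_F \;\le\; \int_0^1 \kappa_0\|\bX-\bX_*\|_F\,\mathrm{d}t \;=\; \kappa_0\|\bX-\bX_*\|_F,
\]
which is exactly \eqref{eq:bZ11}.

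The main obstacle is step (spectral bound on the Hessian acting on a rank-$2r$ direction): Assumption A1 is phrased as a quadratic form inequality, and to convert from a quadratic-form bound $|\langle (\grad^2 f-\mathrm{I})[\bE],\bE\rangle|\le \kappa_0\|\bE\|_F^2$ into the operator-type bound $\|(\grad^2 f-\mathrm{I})[\bE]\|_F\le \kappa_0\|\bE\|_F$ one must be careful since $\grad^2 f$ can in principle map rank-$\le 2r$ matrices to matrices of higher rank. The cleanest remedy is to replace the integral argument by directly invoking the Lipschitz bound on $\grad f$ (which is already Frobenius-to-Frobenius) applied to the convex auxiliary function $g(\bX):=f(\bX)-\tfrac12\|\bX\|_F^2$: since $f$ is rank-$2r$ restricted $L$-smooth and $\mu$-strongly convex with $(L+\mu)/2=1$, the function $g$ has rank-$2r$ restricted Lipschitz gradient of constant $\kappa_0$, and $\grad g(\bX)-\grad g(\bX_*)=\grad f(\bX)-(\bX-\bX_*)$, delivering the bound in one line without having to handle higher-rank iterates $\bX_t$.
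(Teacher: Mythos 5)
Your approach coincides with the paper's: both write $\grad f(\bX)-(\bX-\bX_*)=\int_0^1\bigl(\grad^2 f(\bX_t)-\mathrm{I}\bigr)[\bX-\bX_*]\,\mathrm{d}t$ using $\grad f(\bX_*)=0$ and bound the integrand pointwise by $\kappa_0\|\bX-\bX_*\|_F$ before integrating. So this is essentially the same proof, not a different route.

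Where you differ is in how candidly the delicate step is treated. The paper simply states the integrand bound $\|\grad^2 f(\bX_t)[\bE]-\bE\|_F\leq\kappa_0\|\bE\|_F$ as a ``recall'' of Assumption~A1, but A1 only gives a Frobenius Lipschitz bound on $\grad f$ and a quadratic-form lower bound on $\grad^2 f$ along low-rank directions; converting these to the displayed Frobenius operator bound on $(\grad^2 f-\mathrm{I})[\bE]$ is exactly the point you flag as the ``main obstacle,'' and your caution is warranted. Note that the naive estimate $\|(\grad^2 f-\mathrm{I})[\bE]\|_F^2=\|\grad^2 f[\bE]\|_F^2-2\langle\grad^2 f[\bE],\bE\rangle+\|\bE\|_F^2\leq(L^2-2\mu+1)\|\bE\|_F^2$ only yields the constant $\sqrt{(L-1)(L+3)}$ under $(L+\mu)/2=1$, not $\kappa_0=L-1$, so some form of the self-adjoint spectral argument is genuinely needed. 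Your proposed fix via $g(\bX)=f(\bX)-\tfrac12\|\bX\|_F^2$, however, does not sidestep this: establishing that $\grad g$ is $\kappa_0$-Lipschitz under \emph{restricted} smoothness and strong convexity is precisely the co-coercivity/Baillon--Haddad spectral bound in disguise, and the classical derivation evaluates $f$ at points that need not stay low-rank, so the same issue reappears. In short: same proof as the paper, you are more forthcoming about the one nontrivial step, but your alternative remedy does not actually discharge it.
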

\begin{lem}[Change over iterations]\label{lemma:bZ0}
We have
\[
\|\Pr(\bX+\bZ)-\bX\|_F\geq \max\Big(\frac{1}{2}(\|\bZ\|-\sigma_r(\bX)), \frac{2}{3}\|P_{T(\bX)}(\bZ)\|_F\Big)
\]
\end{lem}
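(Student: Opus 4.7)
The two bounds in Lemma~\ref{lemma:bZ0} are essentially independent and I would handle them separately. The second inequality, $\|\Pr(\bX+\bZ)-\bX\|_F\geq \tfrac{2}{3}\|P_{T(\bX)}(\bZ)\|_F$, falls out immediately from the previously established Lemma~\ref{lemma:projection} by taking $\bY=\bX+\bZ$ there: since $\bX\in T(\bX)$ we have $P_{T(\bX)}(\bY)-\bX = P_{T(\bX)}(\bX)+P_{T(\bX)}(\bZ)-\bX = P_{T(\bX)}(\bZ)$, so Lemma~\ref{lemma:projection} reads exactly as the second bound.

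For the first inequality $\|\Pr(\bX+\bZ)-\bX\|_F \geq \tfrac{1}{2}(\|\bZ\|-\sigma_r(\bX))$, with $\|\cdot\|$ the spectral norm, I would introduce $\bY=\bX+\bZ$ and $\bY^{+}=\Pr(\bY)$, and run a dichotomy based on whether $\sigma_{r+1}(\bY)$ lies below or above the threshold $\tfrac{1}{2}(\|\bZ\|+\sigma_r(\bX))$. Observe the basic identity $\bY^{+}-\bX = \bZ - (\bY-\bY^{+})$; combined with the spectral-norm triangle inequality, the Eckart--Young identity $\|\bY-\bY^{+}\|=\sigma_{r+1}(\bY)$, and $\|\cdot\|_F \geq \|\cdot\|$, this yields
\[
\|\bY^{+}-\bX\|_F \;\geq\; \|\bZ\|-\sigma_{r+1}(\bY).
\]
If $\sigma_{r+1}(\bY) \leq \tfrac{1}{2}(\|\bZ\|+\sigma_r(\bX))$ the desired bound follows immediately from this single line.

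In the complementary regime $\sigma_{r+1}(\bY) > \tfrac{1}{2}(\|\bZ\|+\sigma_r(\bX))$, I would exploit the interlacing $\sigma_r(\bY)\geq \sigma_{r+1}(\bY)$ to conclude $\sigma_r(\bY)-\sigma_r(\bX) > \tfrac{1}{2}(\|\bZ\|-\sigma_r(\bX))$, so in particular this difference is strictly positive. Because $\bY^{+}$ is the truncated SVD of $\bY$ we have $\sigma_r(\bY^{+})=\sigma_r(\bY)$; Weyl's inequality applied to $\bY^{+}$ and $\bX$ then gives
\[
\|\bY^{+}-\bX\|_F \;\geq\; \|\bY^{+}-\bX\| \;\geq\; \sigma_r(\bY^{+})-\sigma_r(\bX) \;>\; \tfrac{1}{2}(\|\bZ\|-\sigma_r(\bX)),
\]
completing the argument.

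The step I expect to require the most care is pinpointing the correct threshold for the case split. The spectral-norm triangle inequality alone cannot close the gap once $\sigma_{r+1}(\bY)$ approaches $\|\bZ\|$, and the $\tfrac{1}{2}$ factor in the statement of the lemma is precisely what is needed to balance the two regimes symmetrically; once this threshold is chosen, the remaining work is a routine combination of Eckart--Young and Weyl.
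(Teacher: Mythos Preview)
Your proof is correct and uses the same ingredients as the paper (Lemma~\ref{lemma:projection} for the $\tfrac{2}{3}$ bound; Eckart--Young plus Weyl for the $\tfrac{1}{2}$ bound). The paper avoids your case split by simply adding the two inequalities $\|\bY^{+}-\bX\|\geq\|\bZ\|-\sigma_{r+1}(\bY)$ and $\|\bY^{+}-\bX\|\geq\sigma_r(\bY^{+})-\sigma_r(\bX)$ and invoking $\sigma_r(\bY)\geq\sigma_{r+1}(\bY)$ to get $2\|\bY^{+}-\bX\|\geq\|\bZ\|-\sigma_r(\bX)$ in one line.
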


\begin{lem}[Property of stationary points]\label{lemma:orthogonalrate0}
 Assuming that $P_{T(\bX)}\bZ=0$ and 
\begin{equation}\label{eq:bZ1}
\|\bZ-(\bX_*-\bX)\|_F\leq \kappa_0\|\bX-\bX_*\|_F,\,\,\end{equation}
then we have  $\|P_{T(\bX)^\perp}\bZ\|\geq  \frac{{1-\kappa_0^2}}{\kappa_0}\sigma_r(\bX)$.
\end{lem}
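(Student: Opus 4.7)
The plan is to exploit the $T(\bX)/T(\bX)^\perp$ block decomposition of $\bD := \bX - \bX_*$ together with the Schur-complement identity that comes from $\rank(\bX_*)\leq r$. Writing $\bD$ in block form relative to the bases $[\bU_{\bX},\bU_{\bX}^\perp]$ and $[\bV_{\bX},\bV_{\bX}^\perp]$ with $r\times r$ top-left block $\bD_{11}=\bU_{\bX}^T\bD\bV_{\bX}$ and off-diagonal / bottom-right blocks $\bD_{12},\bD_{21},\bD_{22}$ defined analogously, the hypothesis $P_{T(\bX)}\bZ=0$ forces $\bZ$ to be supported entirely on the $\bU_{\bX}^\perp\otimes\bV_{\bX}^\perp$ block, which I will denote $\bZ_{22}$.

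First I would rewrite the hypothesis as $\|\bZ+\bD\|_F\leq \kappa_0\|\bD\|_F$ and expand by Pythagoras in the block decomposition. Since $\bZ$ only contributes to the $(2,2)$ block, this splits into the tangent budget
\[
\|\bD_{11}\|_F^2+\|\bD_{12}\|_F^2+\|\bD_{21}\|_F^2 \;\leq\; \frac{\kappa_0^2}{1-\kappa_0^2}\,\|\bD_{22}\|_F^2
\]
and the residual bound $\|\bZ_{22}+\bD_{22}\|_F\leq \kappa_0\|\bD_{22}\|_F$. The former controls the tangent component of $\bD$ by its normal component, and the latter says $\bZ_{22}$ is close to $-\bD_{22}$ inside $T(\bX)^\perp$.

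Next I would invoke $\rank(\bX_*)\leq r$. In the same block basis, $\bX_*$ has top-left block $\bSigma_{\bX}-\bD_{11}$; when this is invertible (equivalently $\|\bD_{11}\|<\sigma_r(\bX)$), the rank constraint forces its Schur complement in $\bX_*$ to vanish, giving
\[
\bD_{22}=-\bD_{21}(\bSigma_{\bX}-\bD_{11})^{-1}\bD_{12},
\]
whence $(\sigma_r(\bX)-\|\bD_{11}\|)\,\|\bD_{22}\|\leq \|\bD_{21}\|\,\|\bD_{12}\|$ in operator norm. The degenerate case $\|\bD_{11}\|\geq \sigma_r(\bX)$ would be handled separately: combined with the Step~1 tangent budget it immediately forces $\|\bD_{22}\|_F\geq \tfrac{\sqrt{1-\kappa_0^2}}{\kappa_0}\,\sigma_r(\bX)$, which in turn gives the desired lower bound on $\|\bZ\|$.

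Finally I would combine the two ingredients: use $\|\bD_{21}\|\,\|\bD_{12}\|\leq (\|\bD_{21}\|_F^2+\|\bD_{12}\|_F^2)/2$ in the Schur inequality, feed in the Step~1 tangent budget, and optimize over how that budget is split between $\|\bD_{11}\|$ and the off-diagonals to obtain a lower bound of the form $\|\bD_{22}\|\geq c(\kappa_0)\,\sigma_r(\bX)$. The triangle inequality $\|\bZ\|=\|\bZ_{22}\|\geq \|\bD_{22}\|-\|\bZ_{22}+\bD_{22}\|$ would then convert this into the claimed bound on $\|P_{T(\bX)^\perp}\bZ\|=\|\bZ\|$. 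The hard part will be extracting the sharp constant $(1-\kappa_0^2)/\kappa_0$, which requires a delicate interplay between operator and Frobenius norms (Schur is cleanest in operator norm, while the tangent budget is naturally Frobenius) and a matching AM--GM step saturated at the extremal configuration, essentially a rank-one $\bD_{22}$ aligned with the smallest singular direction of $\bSigma_{\bX}$; the weaker estimate from the proof of Lemma~\ref{lemma:local} would not suffice, because the Schur identity itself, not merely its magnitude bound, enters the argument.
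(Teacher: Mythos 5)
Your setup is on the right track and matches the mechanism behind the paper's proof: the block decomposition of $\bD=\bX-\bX_*$ relative to $(\bU_{\bX},\bV_{\bX})$, the Schur-complement identity $\bD_{22}=-\bD_{21}(\bSigma_{\bX}-\bD_{11})^{-1}\bD_{12}$ coming from $\rank(\bX_*)\leq r$, the tangent budget derived by Pythagoras from the hypothesis, and the degenerate case $\|\bD_{11}\|\geq\sigma_r(\bX)$ are all correct ingredients, and your degenerate-case bound $\|\bD_{22}\|_F\geq\tfrac{\sqrt{1-\kappa_0^2}}{\kappa_0}\sigma_r(\bX)$ is right. However, there is a genuine gap that you flag but never close, and it is not a minor one: the chain you propose produces, after optimizing the budget, a lower bound on $\|\bZ_{22}\|_F$, whereas the lemma asserts (and the downstream use in Lemma~\ref{lemma:bZ0}, where $\|\bZ\|$ is compared against $\sigma_r(\bX)$, requires) a lower bound on the \emph{operator} norm $\|P_{T(\bX)^\perp}\bZ\|$. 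Your triangle step $\|\bZ\|\geq\|\bD_{22}\|-\|\bZ+\bD_{22}\|$ only works in operator norm if $\|\bZ+\bD_{22}\|$ is controlled in operator norm, but the budget controls it in Frobenius norm, and $\|\bD_{22}\|_{op}$ can be smaller than $\|\bD_{22}\|_F$ by a factor of $\sqrt{r_*}$; run naively, your argument acquires a spurious $\sqrt{r_*}$ loss. The paper closes exactly this gap with Lemma~\ref{lemma:matrixinequality}, a trace rearrangement inequality that lets one pass to a simultaneous coupled $2\times 2$ block-diagonal form for $\bX$ and $\bX_*$; the hypothesis then decouples block by block, each block is a scalar problem, and the per-block answer is a single singular value of $\bZ_{22}$, which is how the operator-norm bound emerges with no rank-dependent loss. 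That reduction is not an optional ``optimize how to split the budget'' step — it is the device that converts Frobenius control into operator control, and your proposal has no substitute for it.

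A secondary point of logic worth flagging: as written, your sentence ``obtain a lower bound of the form $\|\bD_{22}\|\geq c(\kappa_0)\,\sigma_r(\bX)$'' from the Schur inequality is misleading. The Schur inequality $(\sigma_r(\bX)-\|\bD_{11}\|)\|\bD_{22}\|\leq\|\bD_{21}\|\|\bD_{12}\|$ is, on its own, an \emph{upper} bound on $\|\bD_{22}\|$. It only becomes a lower bound after you substitute the tangent budget (which makes $\|\bD_{12}\|,\|\bD_{21}\|,\|\bD_{11}\|$ quadratically small in $\|\bD_{22}\|$) and then divide through by $\|\bD_{22}\|$, which requires $\bD_{22}\neq 0$ (equivalently $\bX\neq\bX_*$, an implicit assumption of the lemma). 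You should make this circular closure explicit — it is precisely the scalar optimization whose extremizer you correctly guess (rank-one $\bD_{22}$ aligned with $\sigma_r(\bX)$), but guessing the extremizer is not a proof; establishing it is what the rearrangement argument does.
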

\begin{lem}[Auxiliary result on matrix inequalities]\label{lemma:matrixinequality}
%
For matrices $\bX,\bY\in\reals^{n\times n}$ and orthogonal matrices $\bU,\bV\in\reals^{n\times n}$,
\[
\tr(\bU\bX\bV\bY)\leq \sum_{i=1}^n\sigma_i(\bX)\sigma_i(\bY),
\]
with equality achieved when $\bX$ and $\bY$ are both diagonal matrices with diagonal entries nonincreasing.
\end{lem}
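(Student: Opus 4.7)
The stated inequality is the classical von Neumann trace inequality wrapped in a slightly non-standard form. The quickest route is to observe that, for any orthogonal $\bU,\bV$, the matrix $\bU\bX\bV$ has the same singular values as $\bX$: if $\bX=\bP\bSigma_\bX\bQ^T$ is an SVD, then $(\bU\bP)\bSigma_\bX(\bV^T\bQ)^T$ is an SVD of $\bU\bX\bV$, so $\sigma_i(\bU\bX\bV)=\sigma_i(\bX)$ for all $i$. Applying the standard von Neumann inequality $\tr(\bA\bB)\le\sum_i\sigma_i(\bA)\sigma_i(\bB)$ with $\bA=\bU\bX\bV$ and $\bB=\bY$ immediately gives the claim. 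Since the paper does not cite von Neumann, I would give a self-contained proof along the following lines.

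First, I would reduce to diagonal form. Write $\bX=\bP_X\bSigma_X\bQ_X^T$ and $\bY=\bP_Y\bSigma_Y\bQ_Y^T$, and absorb these factors into $\bU$ and $\bV$ by cyclicity of the trace: $\tr(\bU\bX\bV\bY)=\tr(\bU'\bSigma_X\bV'\bSigma_Y)$ where $\bU'=\bQ_Y^T\bU\bP_X$ and $\bV'=\bQ_X^T\bV\bP_Y$ are again orthogonal. So we may assume $\bX=\diag(x_1,\ldots,x_n)$ and $\bY=\diag(y_1,\ldots,y_n)$ with $x_1\ge\cdots\ge x_n\ge 0$ and $y_1\ge\cdots\ge y_n\ge 0$, and $x_i=\sigma_i(\bX)$, $y_i=\sigma_i(\bY)$.

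Next, expand entrywise:
\[
\tr(\bU\bX\bV\bY)=\sum_{i,j} x_j\,y_i\,\bU_{ij}\bV_{ji}.
\]
Let $w_{ij}=\bU_{ij}\bV_{ji}$. By the AM--GM inequality $|w_{ij}|\le\tfrac12(\bU_{ij}^2+\bV_{ji}^2)$. Since $\bU$ and $\bV$ are orthogonal, the matrices $(\bU_{ij}^2)_{i,j}$ and $(\bV_{ji}^2)_{i,j}$ have nonnegative entries with each row and column summing to $1$, i.e. they are doubly stochastic, and hence so is their average $D_{ij}=\tfrac12(\bU_{ij}^2+\bV_{ji}^2)$. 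By Birkhoff's theorem, $D=\sum_\pi c_\pi\bP_\pi$ is a convex combination of permutation matrices, so
\[
\tr(\bU\bX\bV\bY)\le \sum_{i,j} x_j y_i D_{ij}=\sum_\pi c_\pi\sum_i x_{\pi(i)}y_i.
\]
The rearrangement inequality (applied to the two nonincreasing sequences $\{x_i\}$ and $\{y_i\}$) gives $\sum_i x_{\pi(i)}y_i\le \sum_i x_iy_i$ for every permutation $\pi$. Combined with $\sum_\pi c_\pi=1$, this yields the bound $\sum_i x_iy_i=\sum_i\sigma_i(\bX)\sigma_i(\bY)$. Equality is then immediate: if $\bX,\bY$ are already diagonal with nonincreasing entries and $\bU=\bV=\bI$, direct computation gives $\tr(\bX\bY)=\sum_i x_iy_i$.

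The main technical hurdle is Step 3 of the chain above, namely identifying the correct doubly stochastic dominator for $(w_{ij})$ and then invoking Birkhoff plus rearrangement. Everything else is either an SVD bookkeeping reduction or a direct substitution. If one prefers a slicker argument, the two-line reduction using $\sigma_i(\bU\bX\bV)=\sigma_i(\bX)$ plus a citation to von Neumann's inequality suffices.
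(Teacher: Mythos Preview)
Your proof is correct, but it takes a genuinely different route from the paper's. After the common SVD reduction to diagonal $\bX$ and $\bY$, the paper does \emph{not} pass through doubly stochastic matrices. Instead it first establishes the Ky Fan--type partial-sum bound
\[
\tr(\bU_{1:r}^T\bX\bV_{1:r})\le \sum_{i=1}^r\sigma_i(\bX)
\]
by a direct elementary argument (Cauchy--Schwarz shows each coefficient $c_i=\sum_{j\le r}\bU_{ij}\bV_{ij}$ satisfies $c_i\le 1$ and $\sum_i c_i\le r$, and these constraints together with $\sigma_i(\bX)$ nonincreasing force the bound), and then applies Abel summation against the decreasing weights $\sigma_r(\bY)-\sigma_{r+1}(\bY)\ge 0$ to assemble the full inequality. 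Your argument, by contrast, bounds the bilinear form in one shot via $|w_{ij}|\le D_{ij}$ with $D$ doubly stochastic, then invokes Birkhoff plus rearrangement. What each buys: the paper's route is self-contained and avoids any appeal to Birkhoff's theorem, at the cost of a slightly less transparent two-stage structure; your route is cleaner conceptually and makes the equality case immediate, but imports heavier machinery (Birkhoff) that the paper evidently wanted to avoid. Either approach is entirely standard for von Neumann's trace inequality.
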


Let $\bZ=-\grad f(\bX)$, then 
By Lemma~\ref{lemma:bZ1}, we have
\begin{align}\label{eq:bZ2}
&\Big\|P_{T(\bX)}(\bZ-(\bX_*-\bX))\Big\|_F^2+\Big\|P_{T(\bX)^\perp}(\bZ-(\bX_*-\bX))\Big\|_F^2\\\leq& \kappa_0^2 \Big(\Big\|P_{T(\bX)}(\bX_*-\bX)\Big\|_F^2+\Big\|P_{T(\bX)^\perp}(\bX_*-\bX)\Big\|_F^2\Big).\nonumber
\end{align}
Assuming $\|P_{T(\bX)}\bZ\|_F=\gamma,$ then $\|P_{T(\bX)}(\bZ-(\bX_*-\bX))\|_F\geq \|P_{T(\bX)}(\bX_*-\bX)\|_F-\gamma$ and 
\begin{align}\nonumber
&\Big\|P_{T(\bX)^\perp}(\bZ)-P_{T(\bX)^\perp}(\bX_*)\Big\|_F^2=\Big\|P_{T(\bX)^\perp}(\bZ-(\bX_*-\bX))\Big\|_F^2\\\nonumber\leq& \kappa_0^2  \Big(\Big\|P_{T(\bX)}(\bX_*-\bX)\Big\|_F^2+\Big\|P_{T(\bX)^\perp}(\bX_*-\bX)\Big\|_F^2\Big) - \Big(\Big\|P_{T(\bX)}(\bX_*-\bX)\Big\|_F-\gamma\Big)^2\\=& \Bigg(\kappa_0^2 \Big\|P_{T(\bX)}(\bX_*-\bX)\Big\|_F^2- \Big(\Big\|P_{T(\bX)}(\bX_*-\bX)\Big\|_F-\gamma\Big)^2\Bigg) +\kappa_0^2\Big\|P_{T(\bX)^\perp}\bX_*\Big\|_F^2\label{eq:bZ3}
\end{align}

Next, we will investigate the bound of $\|\Pr(\bX+\eta\bZ)-\bX\|_F/\|\bX-\bX_*\|_F$ in two cases.

\textbf{Case 1: $\|P_{T(\bX)^\perp}(\bX_*)\|_F/\|\bX-\bX_*\|_F\geq \sqrt{2}/2$.}\\  
Since $\|P_{T(\bX)^\perp}(\bX_*)\|_F=\|P_{T(\bX)^\perp}(\bX-\bX_*)\|_F$ and $\|\bX-\bX_*\|_F^2=\|P_{T(\bX)}(\bX-\bX_*)\|_F^2+\|P_{T(\bX)^\perp}(\bX-\bX_*)\|_F^2$, the assumption implies that \begin{equation}\label{eq:case11}P_{T(\bX)}(\bX-\bX_*)\|_F\leq \|P_{T(\bX)^\perp}(\bX_*)\|_F.\end{equation}

\textbf{Case 1a:} If $\gamma \leq \epsilon\|\bX-\bX_*\|_F$, then by assumption $\gamma\leq 2\epsilon\|P_{T(\bX)^\perp}(\bX-\bX_*)\|_F$, so the RHS of \eqref{eq:bZ3} is bounded by
\begin{align*}&P_{T(\bX)}(\bX_*-\bX)
(\kappa_0^2-(1-2\epsilon)^2) \Big\|P_{T(\bX)}(\bX_*-\bX)\Big\|_F^2+\kappa_0^2\Big\|P_{T(\bX)^\perp}\bX_*\Big\|_F^2\\
\leq & (\hat{\kappa}_0^2-1)\Big\|P_{T(\bX)}(\bX_*-\bX)\Big\|_F^2+\hat{\kappa}_0^2\Big\|P_{T(\bX)^\perp}\bX_*\Big\|_F^2
\end{align*}
where the last step applied \eqref{eq:case11} and assumes that $\hat{\kappa}_0^2={\kappa}_0^2+2\epsilon-2\epsilon^2$. This and  \eqref{eq:bZ3}  implies that 
\begin{equation}\label{eq:bZ111}
\|P_{T(\bX)^\perp}\bZ-(\bX_*-\bX)\|_F\leq \hat{\kappa}_0\|\bX-\bX_*\|_F,\,\,\end{equation}
that is, \eqref{eq:bZ1} holds when $\bZ$ is replaced with $P_{T(\bX)^\perp}\bZ$ and $\kappa_0$ is replaced with $\hat{\kappa}_0$.

Choose $\epsilon$ small such that $\hat{\kappa}_0<1/2$, then Lemma~\ref{lemma:orthogonalrate0} implies $\|P_{T(\bX)^\perp}\bZ\|\geq  \frac{{1-\hat{\kappa}_0^2}}{\hat{\kappa}_0}\sigma_r(\bX)$, so for $\eta>\frac{\hat{\kappa}_0}{{1-\hat{\kappa}_0^2}}$,
\[
\eta\|P_{T(\bX)^\perp}\bZ\|-\sigma_r(\bX)\geq \Big(\eta-\frac{\hat{\kappa}_0}{{1-\hat{\kappa}_0^2}}\Big)\|P_{T(\bX)^\perp}\bZ\|.\]

On the other hand, \eqref{eq:bZ1} implies that 
\[
\|P_{T(\bX)^\perp}\bZ\|_F\geq \|P_{T(\bX)^\perp}(\bX-\bX_*)\|_F-\kappa_0\|\bX-\bX_*\|_F\geq (\sqrt{2}/2-\kappa_0)\|\bX-\bX_*\|_F,
\]
so $\|P_{T(\bX)^\perp}\bZ\|\geq \frac{1}{\sqrt{r}}(\sqrt{2}/2-\kappa_0)\|\bX-\bX_*\|_F$ (note tha $\rank(P_{T(\bX)^\perp}\bZ)\leq r$) and 
\[
\eta\|P_{T(\bX)^\perp}\bZ\|-\sigma_r(\bX)\geq \frac{1}{\sqrt{r}}\Big(\eta-\frac{\hat{\kappa}_0}{{1-\hat{\kappa}_0^2}}\Big)\Big(\frac{\sqrt{2}}{2}-{\kappa}_0\Big)\|\bX-\bX_*\|_F.
\]

Combining it with Lemma~\ref{lemma:bZ0}, we have
\begin{equation}\label{eq:case1a}
\|\Pr(\bX+\eta\bZ)-\bX\|_F\geq \frac{1}{2\sqrt{r}}\Big(\eta-\frac{\hat{\kappa}_0}{{1-\hat{\kappa}_0^2}}\Big)\Big(\frac{\sqrt{2}}{2}-{\kappa}_0\Big)\|\bX-\bX_*\|_F.
\end{equation}

\textbf{Case 1b:} When $\gamma > \epsilon\|\bX-\bX_*\|_F$, Lemma~\ref{lemma:bZ0} implies that
\begin{equation}\label{eq:case1b}
\Big\|\Pr(\bX+\eta\bZ)-\bX\Big\|_F\geq \frac{2}{3}\Big\|\eta P_{T(\bX)}(\bZ)\Big\|_F\geq  
\frac{2}{3}\epsilon\eta\Big\|\bX-\bX_*\Big\|_F.
\end{equation}

\textbf{Case 2: $\|P_{T(\bX)^\perp}(\bX_*)\|_F/\|\bX-\bX_*\|_F<\sqrt{2}/2$}\\
Then following a similar argument as \eqref{eq:case11}, we have \begin{equation}\label{eq:case22}P_{T(\bX)}(\bX-\bX_*)\|_F\geq \frac{\sqrt{2}}{2}\|\bX-\bX_*\|_F.\end{equation}

In addition, \eqref{eq:bZ1} implies that 
\[
\|P_{T(\bX)}\bZ-P_{T(\bX)}(\bX_*-\bX)\|_F\leq \kappa_0\|\bX-\bX_*\|_F.
\]
Combining it with \eqref{eq:case22}, we have
\[
\|P_{T(\bX)}\bZ\|_F\geq \Big(\frac{\sqrt{2}}{2}-\kappa_0\Big)\|\bX-\bX_*\|_F.
\]
Combining it with Lemma~\ref{lemma:projection}, we have
\begin{equation}\label{eq:case2a}
\|\Pr(\bX+\eta\bZ)-\bX\|_F>\eta \frac{2}{3}\Big(\frac{\sqrt{2}}{2}-\kappa_0\Big)\|\bX-\bX_*\|_F>\eta\frac{1}{10}\|\bX-\bX_*\|_F.
\end{equation}

\textbf{Summary} Combining \eqref{eq:case1a}, \eqref{eq:case1b}, \eqref{eq:case2a} and Lemma~\ref{lemma:descent2}, we have
\begin{align*}
&f(\bX_+)-f(\bX)\geq \frac{1}{2}\Big(\frac{1}{\eta}- L\Big)\min\Bigg(\frac{\eta}{10},\frac{2\epsilon\eta}{3}, \frac{1}{2\sqrt{r}}\Big(\eta-\frac{\hat{\kappa}_0}{{1-\hat{\kappa}_0^2}}\Big)\Bigg)^2\|\bX-\bX_*\|_F^2\\
\geq & \frac{1}{L}\Big(\frac{1}{\eta}- L\Big)\min\Bigg(\frac{\eta}{10},\frac{2\epsilon\eta}{3}, \frac{1}{2\sqrt{r}}\Big(\eta-\frac{\hat{\kappa}_0}{{1-\hat{\kappa}_0^2}}\Big)\Bigg)^2(f(\bX)-f(\bX_*)),\end{align*}
where the last step follows from Assumption A1 such that $\|\bX-\bX_*\|_F^2\geq 2(f(\bX)-f(\bX_*))/L$. Since $\eta L=\eta_0 (1+\kappa_0)$. the theorem is proved.
%
%
%
%
%
%

\subsubsection{Proof of Lemmas}

\begin{proof}[Proof of Lemma~\ref{lemma:bZ1}]
Recall Assumption A1 that
\[
\|\grad^2f(\bX)[\bE]-\bE\|_F\leq \kappa_0\|\bE\|_F,
\]
let $\bE=\bX-\bX_*$ and $\bX(t)=\bX_*+t\bE$ be the parameterization of the line connecting $\bX$ and $\bX_*$, then 
\[
\Big\|\Big(\grad f(\bX)-\grad f(\bX_*)\Big)-(\bX-\bX_*)\Big\|_F=\Big\|\int_{t=0}^1\Big(\grad^2f(\bX(t))[\bE]-\bE\Big) \di t\Big\|_F\leq \kappa_0\|\bX-\bX_*\|_F
.\]

Note that $\grad f(\bX_*)=0$, Lemma~\ref{lemma:bZ1} is proved.
\end{proof}

\begin{proof}[Proof of Lemma~\ref{lemma:bZ0}]
The  part $\|\Pr(\bX+\bZ)-\bX\|_F\geq \frac{2}{3}\|P_{T(\bX)}(\bZ)\|_F$ follows from Lemma~\ref{lemma:projection}.

On the other hand, 
\[
\sigma_{r+1}(\Pr(\bX+\bZ))=\|(\bX+\bZ)-\Pr(\bX+\bZ)\|\geq \|\bZ\|-\|\Pr(\bX+\bZ)-\bX\|
\]
and
\[
\sigma_r(\Pr(\bX+\bZ))\leq \sigma_r(\bX)+\|\Pr(\bX+\bZ)-\bX\|,
\]
so
\[
\|\Pr(\bX+\bZ)-\bX\|\geq \frac{1}{2}(\|\bZ\|-\sigma_r(\bX)).
\]\end{proof}

\begin{proof}[Proof of Lemma~\ref{lemma:orthogonalrate0}]
\textbf{Step 1} In this step, we claim that
\begin{equation}\label{eq:boundd}
\Big\|P_{T(\bX)}(\bX_*)-\bX\Big\|_F^2\leq \sum_{i=1}^r\big(\sigma_i([\bX_*]_{\bU_{\bX},\bV_{\bX}})-\sigma_i(\bX)\big)^2 + 2{\sum_{i=1}^r\sigma_i(\bX)\sigma_{r+1-i}(P_{T(\bX)^\perp}(\bX_*))}.
\end{equation}

Write $$\bX_*=\begin{pmatrix}[\bX_*]_{\bU_{\bX},\bV_{\bX}}&[\bX_*]_{\bU_{\bX},\bV_{\bX}^\perp}\\ [\bX_*]_{\bU_{\bX}^\perp,\bV_{\bX}}&[\bX_*]_{\bU_{\bX}^\perp,\bV_{\bX}^\perp}\end{pmatrix},$$
then
\begin{equation}\label{eq:bZ4}
\|P_{T(\bX)}(\bX_*)-\bX\|_F^2=\|[\bX_*]_{\bU_{\bX}^\perp,\bV_{\bX}}\|_F^2+\|[\bX_*]_{\bU_{\bX},\bV_{\bX}^\perp}\|_F^2+\|[\bX_*]_{\bU_{\bX},\bV_{\bX}}-\bX\|_F^2.
\end{equation}
To minimize the RHS of \eqref{eq:bZ4}, we apply Lemma~\ref{lemma:matrixinequality}.

1. \textbf{Upper bound of $\|[\bX_*]_{\bU_{\bX}^\perp,\bV_{\bX}}\|_F^2+\|[\bX_*]_{\bU_{\bX},\bV_{\bX}^\perp}\|_F^2$} 

Assuming that $[\bX_*]_{\bU_{\bX}^\perp,\bV_{\bX}}=\bY[\bX_*]_{\bU_{\bX},\bV_{\bX}}$, then $\rank(\bX)=n$ implies $[\bX_*]_{\bU_{\bX}^\perp,\bV_{\bX}^\perp}=\bY[\bX_*]_{\bU_{\bX},\bV_{\bX}^\perp}$, that is, $[\bX_*]_{\bU_{\bX},\bV_{\bX}^\perp}=\bY^{-1}[\bX_*]_{\bU_{\bX}^\perp,\bV_{\bX}^\perp}$.

Then
\[
\|[\bX_*]_{\bU_{\bX}^\perp,\bV_{\bX}}\|_F^2=\|\bY[\bX_*]_{\bU_{\bX},\bV_{\bX}}\|_F^2=\langle[\bX_*]_{\bU_{\bX},\bV_{\bX}}[\bX_*]_{\bU_{\bX},\bV_{\bX}}^T,\bY\bY^T\rangle
\]
and
\[
\|[\bX_*]_{\bU_{\bX},\bV_{\bX}^\perp}\|_F^2=\langle [\bX_*]_{\bU_{\bX}^\perp,\bV_{\bX}^\perp}[\bX_*]_{\bU_{\bX}^\perp,\bV_{\bX}^\perp}^T,\bY^{-1}\bY^{-1\,T}\rangle.
\]
Applying Lemma~\ref{lemma:matrixinequality}(a), $\|[\bX_*]_{\bU_{\bX}^\perp,\bV_{\bX}}\|_F^2+\|[\bX_*]_{\bU_{\bX},\bV_{\bX}^\perp}\|_F^2$ is minimized when $[\bX_*]_{\bU_{\bX},\bV_{\bX}}$, $[\bX_*]_{\bU_{\bX}^\perp,\bV_{\bX}^\perp}$, $[\bX_*]_{\bU_{\bX}^\perp,\bV_{\bX}}$, $[\bX_*]_{\bU_{\bX},\bV_{\bX}^\perp}$, and $\bY$ are all diagonal. In addition, the diagonal entries of $[\bX_*]_{\bU_{\bX},\bV_{\bX}}$ are positive and nonincreasing, and the diagonal entries of $[\bX_*]_{\bU_{\bX}^\perp,\bV_{\bX}^\perp}$ are positive and nondecreasing.

2. \textbf{Upper bound of $\|[\bX_*]_{\bU_{\bX},\bV_{\bX}}-\bX\|_F^2$}. Since
\[
\|[\bX_*]_{\bU_{\bX},\bV_{\bX}}-\bX\|_F^2=\|[\bX_*]_{\bU_{\bX},\bV_{\bX}}\|_F^2+\|\bX\|_F^2-2\langle[\bX_*]_{\bU_{\bX},\bV_{\bX}}, \bX\rangle,
\]
Lemma~\ref{lemma:matrixinequality}(b) implies that it is minimized when $[\bX_*]_{\bU_{\bX},\bV_{\bX}}$ and $\bX$ are both diagonal.


By the analysis above, the minimum value of $\|P_{T(\bX)}(\bX_*)-\bX\|_F$ is achieved when $\bX$ and $\bX_*$ are in the form of
\[
\bX=\begin{pmatrix}\begin{matrix}x_1&&\\
&\ddots&\\
&&x_r\end{matrix}&\begin{matrix}0&&\\
&\ddots&\\
&&0\end{matrix}\\\begin{matrix}0&&\\
&\ddots&\\
&&0\end{matrix}&\begin{matrix}0&&\\
&\ddots&\\
&&0\end{matrix}\end{pmatrix},\,\,\bX_*=\begin{pmatrix}\begin{matrix}x^*_1&&\\
&\ddots&\\
&&x^*_r\end{matrix}&\begin{matrix}\sqrt{x^*_1y^*_1}&&\\
&\ddots&\\
&&\sqrt{x^*_ry^*_r}\end{matrix}\\\begin{matrix}\sqrt{x^*_1y^*_1}&&\\
&\ddots&\\
&&\sqrt{x^*_ry^*_r}\end{matrix}&\begin{matrix}y^*_1&&\\
&\ddots&\\
&&y^*_r\end{matrix}\end{pmatrix}.
\]
If we let
\[
\bX^{(i)}=\begin{pmatrix}x_i&0\\0&0\end{pmatrix},\,\,\bX_*^{(i)}=\begin{pmatrix}x^*_i&\sqrt{x^*_iy^*_i}\\\sqrt{x^*_iy^*_i}&y^*_i\end{pmatrix},
\]
then with a change of basis we have the block-diagonal representation of $\bX$ and $\bX_*$
\[
\bX=\begin{pmatrix}\bX^{(1)}&&\\
&\ddots&\\
&&\bX^{(r)}\end{pmatrix},\,\,\,\bX_*=\begin{pmatrix}\bX_*^{(1)}&&\\
&\ddots&\\
&&\bX_*^{(r)}\end{pmatrix}.
\]

\textbf{Step 2} Now we find $\sigma_i(\bX)$ such that the RHS of \eqref{eq:boundd} is minimized. 

If $y^*>x$, then the best $x^*$ is 0, so it is 
\[
\kappa_0^2y*^2-(1-\kappa_0^2)x^2\]

If $y^*<x$, then the best $x^*$ is $x-y^*$, so it is
\[
\kappa_0^2y*^2-(1-\kappa_0^2)(-y^{*2}+2xy^*)<0
\]

In summary, assuming that $x_i^*$ and $x_i$ are decreasing and $y_i$ is increasing, then it is
\[
\kappa_0^2\sum_{i=1}^ry_i^{*2}-(1-\kappa_0^2)\sum_{i=1}^r\min(x_i^2,-y_i^{*2}+2x_iy_i^*)\]
In summary, we have
\begin{equation}\label{eq:orthogonalrate0}\|P_{T(\bX)^\perp}(\bZ-\bX_*)\|_F^2\leq \kappa_0^2\sum_{i=1}^{r_0}\sigma_i^2(P_{T(\bX)^\perp}(\bX_*))-(1-\kappa_0^2)\sum_{i=1}^{r_0}\sigma_{r+1-i}\sigma_i^2(\bX),\end{equation}
where $r_0$ is the largest integer such that $\sigma_{r_0}(P_{T(\bX)^\perp}(\bX_*))>\sigma_{r+1-r_0}(P_{T(\bX)^\perp}(\bX_*))$.

\textbf{Step 3} Assuming \eqref{eq:orthogonalrate0}, then $\|P_{T(\bX)^\perp}\bZ\|\geq  \frac{{1-\kappa_0^2}}{\kappa_0}\sigma_r(\bX)$. The proof is as follows: Let $a=\|\calP_{r_0,\perp}\bX\|_F$ and $b=\|\calP_{r_0}P_{T(\bX)^\perp}(\bX_*)\|_F$, then $\|P_{T(\bX)^\perp}\bZ\|_F\geq b-\sqrt{\kappa_0^2b^2-(1-\kappa_0^2)a^2}\geq \frac{{1-\kappa_0^2}}{\kappa_0}a$, which implies 
\begin{equation}\label{eq:orthogonalrate1}
\|\bU'^TP_{T(\bX)^\perp}(\bX_*)\bV'\|\geq \frac{{1-\kappa_0^2}}{\kappa_0}a,
\end{equation}
where $\bU',\bV'\in\reals^{n\times r_0}$ are the top $r_0$ left and right singular vectors of $P_{T(\bX)^\perp}(\bX_*)$, and the last inequality follows from calculus: by taking the derivative of $y-\sqrt{\kappa_0^2y^2-(1-\kappa_0^2)a^2}$, we have
\[
\frac{\kappa_0^2y}{\sqrt{\kappa_0^2y^2-(1-\kappa_0^2)a^2}}=1
\]
and $y^2=(1-\kappa_0^2)a^2/(\kappa_0^2-\kappa_0^4)$, and
\[
y-\sqrt{\kappa_0^2y^2-(1-\kappa_0^2)a^2}=(1-\kappa_0^2)y=\frac{{1-\kappa_0^2}}{\kappa_0}{a}.\]
Then $\|P_{T(\bX)^\perp}\bZ\|\geq  \frac{{1-\kappa_0^2}}{\kappa_0}\sigma_r(\bX)$ follows from \eqref{eq:orthogonalrate1}.
\end{proof}

\begin{proof}[Proof of Lemma~\ref{lemma:matrixinequality}]
Denote the first $r$ columns of $\bU,\bV$ by $\bU_{1:r}$ and $\bV_{1:r}\in\reals^{n\times r}$, then we will show that
\begin{equation}\label{eq:matrixinequality1}
\tr(\bU_{1:r}^T\bX\bU_{1:r})\leq \sum_{i=1}^r\sigma_i(\bX).
\end{equation}
WLOG assume that $\bX=\diag(\sigma_1(\bX),\cdots, \sigma_n(\bX))$, then 
\[
\tr(\bU_{1:r}^T\bX\bU_{1:r})=\sum_{i=1}^n\sigma_i(\bX)\sum_{j=1}^r\bU_{ij}\bV_{ij}.
\]
Since for all $1\leq i\leq n$, $\sum_{j=1}^r\bU_{ij}\bV_{ij}\leq \sqrt{\sum_{j=1}^r\bU_{ij}^2}\sqrt{\sum_{j=1}^r\bV_{ij}^2}\leq 1$ and $\sum_{i=1}^n\sum_{j=1}^r\bU_{ij}\bV_{ij}\leq r$, we have \eqref{eq:matrixinequality1}.

WLOG assume that $\bY=\diag(\sigma_1(\bY),\cdots, \sigma_n(\bY))$, then the Lemma is proved by
\begin{align*}&
\tr(\bU^T\bX\bV\bY)=\sum_{r=1}^n\tr(\bU_{1:r}^T\bX\bV_{1:r})(\sigma_r(\bY)-\sigma_{r+1}(\bY))\\\leq& \sum_{r=1}^n(\sum_{i=1}^r\sigma_i(\bX))(\sigma_r(\bY)-\sigma_{r+1}(\bY))=\sum_{i=1}^n\sigma_i(\bX)\sigma_i(\bY).
\end{align*}
The conditions for inequality follow from investigating when the inequalities above become equalities.
\end{proof}

\subsection{Proofs of Theorem~\ref{thm:perturb} and Corollary~\ref{thm:global}}
\begin{proof}[Proof of Theorem~\ref{thm:perturb}]
We first present a few auxiliary lemmas, with their proofs deferred.

\begin{lem}\label{lemma:retraction_deri}[Derivatives of the pullback of $f$]
(a)
\begin{align}
\|\grad{f}_{\bX}(\bS)-\grad{f}_{\bX}(\bS')\|_F&\leq L_T\|\bS-\bS'\|_F\label{eq:retraction_deri1}\\
\|\grad{f}_{\bX}(\bS)-\grad{f}_{\bX}(\bS')-\grad{f}^2_{\bX}(0)(\bS-\bS')\|_F&\leq \rho_T\max(\|\bS\|_F,\|\bS'\|_F)\|\bS-\bS'\|_F\label{eq:retraction_deri2}
\end{align}
for $L_T=4L(1+2/\epsilon_T)+2M/\epsilon_T$ and $\rho_T=12\rho(1+2/\epsilon_T)^2+2M/\epsilon_T^2$.\\ 
(b)  If $\|[\grad f(\bX)]_{\bU_{\bX}^\perp,\bV_{\bX}^\perp}\|>L\sigma_r(\bX)$, then $\sigma_{\min}(\grad^2\hat{f}_{\bX}(0))<L-\|[\grad f(\bX)]_{\bU_{\bX}^\perp,\bV_{\bX}^\perp}\|/\sigma_r(\bX)$.
\end{lem}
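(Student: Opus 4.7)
The plan is to work with an explicit coordinate description of the retraction and then push everything through the chain rule.  In the block basis adapted to $\bX=\bU_{\bX}\bSigma_{\bX}\bV_{\bX}^T$, a tangent vector $\bS\in T(\bX)$ has the form
\[
\bS = \begin{pmatrix}\bA & \bC\\ \bB & 0\end{pmatrix},
\]
and the point on $\calM_r$ whose $T(\bX)$-component is $\bS$ is obtained from the rank-$r$ Schur-complement completion
\[
\Retr_{\bX}(\bS) - \bX \;=\; \begin{pmatrix}\bA & \bC \\ \bB & \bB(\bSigma_{\bX}+\bA)^{-1}\bC\end{pmatrix}.
\]
Under the standing side conditions $\sigma_r(\bX)\geq 2\epsilon_T$ and $\|\bS\|_F\leq \epsilon_T$, the matrix $\bSigma_{\bX}+\bA$ is invertible with $\|(\bSigma_{\bX}+\bA)^{-1}\|\leq 1/\epsilon_T$, so $\Retr_{\bX}$ is smooth and well-defined.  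From the formula one computes $\|D\Retr_{\bX}(\bS)\|_{\mathrm{op}} \leq 1 + O(\|\bS\|_F/\epsilon_T)$ and $\|D^2\Retr_{\bX}(\bS)\|_{\mathrm{op}} \leq O(1/\epsilon_T)$, with the $1/\epsilon_T$ factors all tracing back to $(\bSigma_{\bX}+\bA)^{-1}$.

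For part~(a) I would then invoke the chain rule $\grad\hat f_{\bX}(\bS) = D\Retr_{\bX}(\bS)^{*}\grad f(\Retr_{\bX}(\bS))$.  Writing $\grad\hat f_{\bX}(\bS)-\grad\hat f_{\bX}(\bS')$ as the sum of two telescoped differences, one obtains a contribution of type ``$\|D\Retr\|^2 \cdot L \cdot \|\bS-\bS'\|_F$'' from the inner $\grad f$ (via $L$-smoothness), plus a contribution of type ``$\|D^2\Retr\| \cdot \|\grad f(\Retr_{\bX}(\bS'))\|_F \cdot \|\bS-\bS'\|_F$''.  The gradient at the nearby point is bounded by $M + L\|\Retr_{\bX}(\bS')-\bX\|_F$, using the definition of $M$ together with $L$-smoothness and the fact that every iterate of the tangent-space subroutine keeps $f(\Retr_{\bX}(\cdot))\leq f(\bX^{(0)})$.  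Collecting these and using the explicit bounds $1+O(\|\bS\|/\epsilon_T)$ and $O(1/\epsilon_T)$ produces the constant $L_T$.  The Hessian-Lipschitz bound $\rho_T$ follows in exactly the same way from a second-order chain rule, using $\rho$-Hessian-Lipschitzness of $f$ and the bound on the third derivative of $\Retr_{\bX}$, again obtained from the Schur-complement formula.

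For part~(b) the key observation from the explicit formula is that
\[
D^2\Retr_{\bX}(0)[\bS,\bS] \;=\; 2\begin{pmatrix}0&0\\0&\bB\bSigma_{\bX}^{-1}\bC\end{pmatrix},
\]
a matrix lying entirely in $T(\bX)^{\perp}$.  Combining with $D\Retr_{\bX}(0)=\mathrm{id}_{T(\bX)}$ in the second-order chain rule yields
\[
\grad^2 \hat f_{\bX}(0)[\bS,\bS] \;=\; \grad^2 f(\bX)[\bS,\bS] + 2\bigl\langle [\grad f(\bX)]_{\bU_{\bX}^{\perp},\bV_{\bX}^{\perp}},\; \bB\bSigma_{\bX}^{-1}\bC\bigr\rangle.
\]
The first term is at most $L\|\bS\|_F^2$ by Assumption~A1, so it suffices to exhibit a unit-norm $\bS$ making the cross term equal to $-\|\bG\|/\sigma_r(\bX)$, where $\bG := [\grad f(\bX)]_{\bU_{\bX}^{\perp},\bV_{\bX}^{\perp}}$.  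Let $\bp,\bq$ be the top left/right singular vectors of $\bG$ and let $e_r$ be the $r$-th standard basis vector in $\reals^r$.  Take $\bA=0$, $\bB=-\tfrac{1}{\sqrt 2}\bp e_r^T$, $\bC=\tfrac{1}{\sqrt 2}e_r\bq^T$.  Then $\|\bS\|_F^2=1$ and $\bB\bSigma_{\bX}^{-1}\bC=-\tfrac{1}{2\sigma_r(\bX)}\bp\bq^T$, so the cross term is $-\|\bG\|/\sigma_r(\bX)$ and $\grad^2\hat f_{\bX}(0)[\bS,\bS]\leq L-\|\bG\|/\sigma_r(\bX)$, proving the claim.

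The main obstacle I expect is the bookkeeping in part~(a): tracking the three simultaneous contributions (the baseline $L$, the $1/\epsilon_T$-scaling coming from the Schur-complement inverse, and the gradient bound $M$ entering the $D^2\Retr$ cross-term) in a way that actually matches the advertised constants $4L(1+2/\epsilon_T)+2M/\epsilon_T$ and $12\rho(1+2/\epsilon_T)^2+2M/\epsilon_T^2$.  Part~(b), once the second-fundamental-form identity for $\calM_r$ is in hand, is essentially a one-direction variational argument.
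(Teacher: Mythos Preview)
Your proposal is correct and follows essentially the same route as the paper: the same Schur-complement formula for $\Retr_{\bX}$, the same chain-rule telescoping for part~(a) (the paper splits into three directional-derivative pieces via an explicit block $\bH(\bS,\bDelta)$ rather than your two operator-level terms, but the bounds and constants arise identically), and the same one-direction test vector for part~(b). Your construction in~(b) is in fact cleaner than the paper's: you take $\bB=-\tfrac{1}{\sqrt2}\bp e_r^T$, $\bC=\tfrac{1}{\sqrt2}e_r\bq^T$ so that $\bv_1^T\bSigma_{\bX}^{-1}\bu_1=1/\sigma_r(\bX)$ is forced by the explicit choice of $e_r$, whereas the paper's stated choice of $\bu_1,\bv_1$ as singular vectors of $[\grad f(\bX)]_{\bU_{\bX},\bV_{\bX}}$ appears to be a slip---what is actually needed (and what the subsequent displayed formula for $\Retr_{\bX}(t\bDelta)$ uses) is exactly your $e_r$.
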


\begin{lem}[Guarantees when PprojGD stops]\label{lemma:alg_stop}
If the algorithm stops at $\bX$, then we have
\[
\|[\grad f(\bX)]\|\leq \frac{8}{3}(\epsilon+\epsilon_T/\eta).
\]
\end{lem}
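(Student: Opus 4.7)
\textbf{Proof plan for Lemma~\ref{lemma:alg_stop}.} The plan is to read off the two stopping conditions from Algorithm~\ref{alg:matrix-reg-log} and combine them via Lemma~\ref{lemma:bZ0} applied to the vector $\bZ = -\eta\grad f(\bX)$. The algorithm terminates at $\bX = \bX^{(\iter)}$ precisely when both $\|\bX_+ - \bX\|_F < 2\eta\epsilon/3$ \emph{and} $\sigma_r(\bX) < 2\epsilon_T$, where $\bX_+ = \Pr(\bX - \eta\grad f(\bX))$. I want to turn these into an upper bound on $\|\grad f(\bX)\|$.

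The key input is Lemma~\ref{lemma:bZ0}, which with $\bZ = -\eta\grad f(\bX)$ reads
\[
\|\bX_+ - \bX\|_F \;\geq\; \frac{1}{2}\bigl(\|\eta\grad f(\bX)\| - \sigma_r(\bX)\bigr),
\]
using that the Frobenius norm dominates the spectral norm on the left-hand side. Rearranging and substituting the two stopping conditions,
\[
\eta\,\|\grad f(\bX)\| \;\leq\; 2\|\bX_+-\bX\|_F + \sigma_r(\bX) \;<\; \frac{4\eta\epsilon}{3} + 2\epsilon_T.
\]
Dividing by $\eta$ and bounding $\frac{4\epsilon}{3} + \frac{2\epsilon_T}{\eta} \leq \frac{8}{3}\bigl(\epsilon + \epsilon_T/\eta\bigr)$ yields exactly the stated conclusion.

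There is essentially no obstacle here beyond keeping the norm types straight: Lemma~\ref{lemma:bZ0} is proved for the spectral norm on the right-hand side (via Weyl's inequality on $\sigma_r$ and $\sigma_{r+1}$ of $\bX + \bZ$), while the stopping criterion is stated in Frobenius norm, so the inequality $\|\cdot\|_F \geq \|\cdot\|$ is used to line them up. The only ``choice'' is whether to absorb the two error terms into the single bound $\frac{8}{3}(\epsilon + \epsilon_T/\eta)$ or keep them separate; I would state the sharper form $\|\grad f(\bX)\| < \frac{4\epsilon}{3} + \frac{2\epsilon_T}{\eta}$ as an intermediate step and then loosen it to match the lemma statement, since the looser form is what downstream applications (e.g.\ the concluding statement of Theorem~\ref{thm:perturb}) invoke.
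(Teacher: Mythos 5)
Your proposal is correct and follows essentially the same route as the paper: read off the two stopping conditions, invoke Lemma~\ref{lemma:bZ0} with $\bZ=-\eta\grad f(\bX)$, and rearrange. The only cosmetic difference is that the paper splits $\grad f(\bX)$ into its $P_{T(\bX)}$ and $P_{T(\bX)^\perp}$ components and bounds each separately before recombining, whereas you bound $\|\grad f(\bX)\|$ directly from the first branch of Lemma~\ref{lemma:bZ0} together with $\sigma_r(\bX)<2\epsilon_T$, which is in fact the cleaner (and slightly tighter) calculation; both land on the same final constant $\tfrac{8}{3}(\epsilon+\epsilon_T/\eta)$.
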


Building upon \cite[Lemma B.2]{NEURIPS2019_7486cef2}, and observing that \eqref{eq:retraction_deri2} provides a viable alternative to Assumption 3, as demonstrated in the proof of Lemma C.4 in the same reference, we obtain:
\begin{lem}[Guarantees on tangent space steps]\label{lemma:b2}
If $\bX$ satisfies that $\|\grad \hat{f}_{\bX}(0)\|\leq \epsilon$ and $\lambda_{\min}(\grad^2 \hat{f}_{\bX}(0))\leq -\sqrt{\rho_T\epsilon}$, with $\epsilon\leq \epsilon_T^2\rho_T$ and $L_T\geq \sqrt{\rho_T\epsilon}$. Set $l_T\geq L_T+\rho_T\epsilon_T$ and $\chi\geq 1/4$, then
\[
\eta_T=\frac{1}{l_T}, r=\frac{\epsilon}{400\chi^3}, \mathcal{J}=\frac{l_T\chi}{\sqrt{\rho_T\epsilon}}, \mathcal{F}=\frac{1}{50\chi^3}\sqrt{\frac{\epsilon^3}{\rho_T}}, 
\]
then
\[
\mathrm{Pr}(f(\bX,r,\eta_T,\epsilon_T,\mathcal{J})-f(\bX)\leq -\mathcal{F}/2)\geq 1-\frac{l_T\sqrt{2rn}}{\sqrt{\rho_T\epsilon}}2^{10-\chi/2}.
\]
\end{lem}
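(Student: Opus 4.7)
The plan is to reduce the statement to \cite[Lemma B.2]{NEURIPS2019_7486cef2} applied to the pullback $\hat{f}_{\bX}$, regarded as a smooth function on the linear space $T(\bX)$ (of dimension $r(2n-r)$) equipped with the Frobenius inner product. In this light, Algorithm~\ref{alg:tangent} is literally perturbed gradient descent on $\hat{f}_{\bX}$ with step size $\eta_T$ and a trust-region-style norm cap at radius $\epsilon_T$ enforced in Steps 3--4. My task is then to verify that the regularity hypotheses driving the proof of Lemma B.2 (and the underlying Lemmas C.1--C.4) are supplied by Lemma~\ref{lemma:retraction_deri}, with $L$ replaced by $L_T$ and the Hessian-Lipschitz constant $\rho$ replaced by an effective value compatible with the inhomogeneous form of \eqref{eq:retraction_deri2}.

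The verification has two parts. First, the $L_T$-Lipschitz gradient of $\hat{f}_{\bX}$ established in \eqref{eq:retraction_deri1} directly substitutes for the global smoothness assumption in the reference. Second, Assumption 3 there asks for a genuine $\rho$-Hessian Lipschitz bound, whereas we only have \eqref{eq:retraction_deri2}, which takes the form $\|\grad\hat{f}_{\bX}(\bS)-\grad\hat{f}_{\bX}(\bS')-\grad^2\hat{f}_{\bX}(0)(\bS-\bS')\|_F \leq \rho_T\max(\|\bS\|_F,\|\bS'\|_F)\|\bS-\bS'\|_F$. This is exactly the quantity appearing in the proof of \cite[Lemma C.4]{NEURIPS2019_7486cef2}, which is the only place Assumption 3 is used. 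Whenever iterates remain inside the ball $\{\|\bS\|_F\leq \epsilon_T\}$, this inequality behaves as a clean $\rho_T\epsilon_T$-Hessian-Lipschitz bound, so $l_T := L_T + \rho_T\epsilon_T$ plays the role of the reference's overall smoothness constant and $\eta_T = 1/l_T$ is the matching step size. The in-ball condition is enforced by Steps 3--4: whenever a step would exit the $\epsilon_T$-ball, Step 4 rescales the step size and terminates, so the entire executed trajectory stays in the region where \eqref{eq:retraction_deri2} is effective.

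With these substitutions in place I would reproduce the coupling argument of \citep{Jin2017HowTE,NEURIPS2019_7486cef2}. Consider two runs of Algorithm~\ref{alg:tangent} seeded with perturbations $\bS^{(0)}_a,\bS^{(0)}_b\in T(\bX)$ of norm $r$ that differ only along the minimum-eigenvector direction $\bv$ of $\grad^2\hat{f}_{\bX}(0)$; by hypothesis the corresponding eigenvalue is $\leq -\sqrt{\rho_T\epsilon}$. If neither trajectory decreases $\hat{f}_{\bX}$ by $\mathcal{F}/2$, the descent lemma (using the gradient Lipschitz constant $L_T$ and step size $\eta_T \leq 1/l_T$) combined with the approximate-quadratic bound from \eqref{eq:retraction_deri2} forces both runs to remain in a small stuck region around $0$, where the Hessian is well approximated by $\grad^2\hat{f}_{\bX}(0)$. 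On this region the $\bv$-component of $\bS^{(j)}_a-\bS^{(j)}_b$ is amplified by factor $(1+\eta_T\sqrt{\rho_T\epsilon})$ per step, so after $\mathcal{J} = l_T\chi/\sqrt{\rho_T\epsilon}$ steps the separation exceeds $2^{\chi/2}\cdot r$ and, for the prescribed choice of $\chi$, overflows the stuck-region diameter, a contradiction. A standard slab/volume argument then bounds the probability that a uniformly random perturbation of norm $r$ lies in the stuck set by $\sqrt{2rn}\cdot l_T \cdot 2^{-\chi/2}/\sqrt{\rho_T\epsilon}$, matching the statement after the universal constant $2^{10}$ is absorbed. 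Transporting the guarantee back through $\Retr_{\bX}$ uses $f \circ \Retr_{\bX} = \hat{f}_{\bX}$, which is exact by definition.

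The principal technical obstacle is precisely the inhomogeneity of \eqref{eq:retraction_deri2} versus the uniform Hessian-Lipschitz bound used in \citep{NEURIPS2019_7486cef2}: the effective Lipschitz constant $\rho_T\|\bS\|_F$ depends on the iterate's location. The fix is to show by induction along both coupled trajectories that $\|\bS^{(j)}\|_F$ stays of order $\epsilon_T$, which the norm cap of Steps 3--4 enforces by construction, so the effective Hessian-Lipschitz constant can be taken to be $\rho_T\epsilon_T$ throughout. The hypotheses $\epsilon\leq \rho_T\epsilon_T^2$ and $L_T\geq \sqrt{\rho_T\epsilon}$ precisely encode this compatibility: the stuck-region radius $\sqrt{\epsilon/\rho_T}$ is bounded by $\epsilon_T$, and the eigenvalue scale $\sqrt{\rho_T\epsilon}$ is dominated by $L_T$, so the quantitative estimates of the reference apply verbatim with parameters $(l_T,\rho_T)$ in place of $(\ell,\rho)$, delivering the stated probability-of-descent bound.
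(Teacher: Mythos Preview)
Your proposal is correct and follows exactly the paper's approach: the paper's entire ``proof'' of Lemma~\ref{lemma:b2} is the single sentence preceding its statement, which says to invoke \cite[Lemma B.2]{NEURIPS2019_7486cef2} and observe that \eqref{eq:retraction_deri2} can replace Assumption~3 there since that assumption is used only in the proof of \cite[Lemma~C.4]{NEURIPS2019_7486cef2}. You have expanded this remark into a detailed sketch --- identifying the ball constraint $\|\bS\|_F\leq\epsilon_T$ enforced by Steps~3--4 as what makes the inhomogeneous bound \eqref{eq:retraction_deri2} effective with constant $\rho_T\epsilon_T$, and tracing how the hypotheses $\epsilon\leq\rho_T\epsilon_T^2$ and $L_T\geq\sqrt{\rho_T\epsilon}$ keep the stuck-region and eigenvalue scales compatible --- but the underlying argument is the same.
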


Now we are ready to prove Theorem~\ref{thm:perturb}. It follows from Lemma~\ref{lemma:bZ0} that when the algorithm does not stop, we have\[
\epsilon\geq \|P_{T(\bX)}(\grad f(\bX))\|_F =\|\grad \hat{f}_{\bX}(0)\|_F,
\]
and when the tangent space step are not involved, then Lemma~\ref{lemma:descent2} implies 
\[
f(\bX)\geq f(\bX_+)\geq  \frac{1}{2}\Big(\frac{1}{\eta}- L\Big)\|\bX-\bX^+\|_F^2\geq \frac{1}{2}\Big(\frac{1}{\eta}- L\Big)\eta^2\epsilon^2.
\]
Combining it with Lemma~\ref{lemma:b2}, we have that in 
\[
\calT=\frac{f(\bX^{(0)})-f(\bX_*)}{\min\Bigg(\frac{1}{2}\Big(\frac{1}{\eta}- L\Big)\eta^2\epsilon^2,\calF\Bigg)},\,\,\text{where}\,\,\mathcal{F}=\frac{1}{50\chi^3}\sqrt{\frac{\epsilon^3}{\rho_T}}
\]
iterations, with probability at least
\[
1-\calT\frac{l_T\sqrt{2rn}}{\sqrt{\rho_T\epsilon}}2^{10-\chi/2},
\]
then algorithm must either stop or reach $\bX$ such that $\|\grad \hat{f}_{\bX}(0)\|\leq \epsilon$ and $\lambda_{\min}(\grad^2 \hat{f}_{\bX}(0))\leq -\sqrt{\rho_T\epsilon}$.

As a result, $\chi$ need to be chosen such that
\begin{equation}\label{eq:choose_chi}
2^{\chi/2-10}\geq \calT\frac{l_T\sqrt{2rn}}{\sqrt{\rho_T\epsilon}}=(f(\bX^{(0)})-f(\bX_*))\Big(\frac{2}{\Big(\frac{1}{\eta}- L\Big)\eta^2\epsilon^2}+50\chi^3\sqrt{\frac{\rho_T}{\epsilon^3}}\Big)\frac{l_T\sqrt{2rn}}{\sqrt{\rho_T\epsilon}}
\end{equation}

 In order to have $\lambda_{\min}(\grad^2 \hat{f}_{\bX}(0))=-\gamma$, we have $\epsilon=\gamma^2/\rho_T=C(\epsilon_T^2)\gamma^2/(\rho+M)$. As a result, when $\epsilon_T=o(1)$, we have
 \[
 \epsilon_T= C\sqrt{\epsilon(\rho+M)}/\gamma. \]
 In addition,  $\epsilon\leq \epsilon_T^2\rho_T$ and $L_T\geq \sqrt{\rho_T\epsilon}$ are satisfied when 
 \[
 \epsilon\leq C(\rho+M),\,\,\epsilon\leq O((L+M)^2/(\rho+M)).\]
 In addition, $l_T\geq L_T+\rho_T\epsilon_T$ implies that
 \[
 l_T\geq C(L+\rho+M)/\epsilon_T
 \]
 and
 \[
 \frac{l_T\sqrt{2rn}}{\sqrt{\rho_T\epsilon}}= \frac{l_T\sqrt{2rn}}{\gamma}=C\frac{(L+\rho+M)\sqrt{2rn}}{\gamma\epsilon_T}\leq C\frac{(L+\rho+M)\sqrt{2rn}}{\sqrt{\epsilon(\rho+M)}}, \frac{\rho_T}{\epsilon^3}=\frac{\rho_T\epsilon}{\epsilon^4}=\frac{\gamma^2}{\epsilon^4}.
 \]
 
Apply $l_T\geq C(\rho+L+M)/\epsilon_T$, assume $l_T>2L$, then the RHS of \eqref{eq:choose_chi} is
\[
C(f(\bX^{(0)})-f(\bX_*))(\frac{l_T}{\epsilon^2}+\chi^3\frac{\gamma}{\epsilon^2})\frac{l_T\sqrt{2rn}}{\gamma}
\]

To satisfy \eqref{eq:choose_chi}, note that $\chi\geq 12\log(\chi)$ when $\chi\geq 14$, it is sufficient to have \eqref{eq:chi_need}. Theorem~\ref{thm:perturb} is then proved.


\end{proof}

\begin{proof}[Proof of Corollary~\ref{thm:global}]
Here we first prove part (b) and then derive part (a).

(b) Assume that $L+\mu=2$. If it converges to a $(\epsilon,\gamma)$-second order stationary point, then $\|P_{T(\bX)}\grad f(\bX)\|_F\leq \epsilon$, and for
\[
\bZ=P_{T(\bX)^\perp}\grad f(\bX),
\]
we have from Lemma~\ref{lemma:bZ1} that
\[
\|\bZ-(\bX-\bX_*)\|_F\leq \epsilon+\|\grad f(\bX)-(\bX-\bX_*)\|_F\leq \epsilon+\kappa_0\|\bX-\bX_*\|_F\leq \kappa_0'\|\bX-\bX_*\|_F
\]
for $\kappa_0'=\kappa_0+\frac{\epsilon}{\|\bX-\bX_*\|_F}$. 
Then Lemma~\ref{lemma:orthogonalrate0} implies that $\|P_{T(\bX)^\perp}\grad f(\bX)\|\geq  \frac{{1-\kappa_0'^2}}{\kappa'_0}\sigma_r(\bX)$. This and Lemma~\ref{lemma:retraction_deri}(b) imply that $\grad^2 \hat{f}_{\bX}(0)\geq \frac{{1-\kappa_0'^2}}{\kappa'_0}-(1+\kappa_0)$, so $\bX$ is a saddle point only when $\|\bX-\bX_*\|_F$ is small such that
\[
\gamma\leq \frac{{1-\kappa_0'^2}}{\kappa'_0}-(1+\kappa_0).
\]
As a result, any  $(\epsilon,1/2-\kappa_0)$-second order minimizer, as defined in \eqref{eq:secondorder}, ensures $\|\bX-\bX_*\|_F \leq \epsilon/(1/2-\kappa_0)$. This implies part(b) for generic $L+\mu$.

(a) When $\rank(\bX)=r$ and $\bX\neq \bX_*$, following the same proof as in part (b), $\bX\neq\bX_*$ is not a $(0,0)$-second order minimizer and therefore not local minimizer. When $\rank(\bX)<r$ and $\bX\neq \bX_*$,  Lemma~\ref{lemma:orthogonalrate0} 
 and Lemma~\ref{lemma:retraction_deri}(b)  still apply with $\sigma_r(\bX)$ replaced by $\sigma_{\rank(\bX)}(\bX)$, so using the same argument, $\bX$ is not a local minimizer.
\end{proof}

\subsubsection{Proof of Lemmas}
\begin{proof}[Proof of Lemma~\ref{lemma:retraction_deri}]
(a) In this proof, we use $ \grad_{\bDelta} \hat{f}_{\bX}(\bS)$ to denote the directional derivatives of $\hat{f}_{\bX}$ at $\bS$ with direction $\bDelta$. Then we have
\[
 \grad_{\bDelta} \hat{f}_{\bX}(\bS)=\langle\bDelta, \grad f(\bX)\rangle
\]
As a result, to prove \eqref{eq:retraction_deri1}, it is sufficient to show that \begin{align}
\Big|\grad_{\bDelta}{f}_{\bX}(\bS)-\grad_{\bDelta}{f}_{\bX}(\bS')\Big|\leq L_T\|\bS-\bS'\|_F\|\bDelta\|_F\label{eq:retraction_deri1a}
\end{align}
By the definition of directional derivative,
\[
\grad_{\bDelta}{f}_{\bX}(\bS)=\lim_{t\rightarrow 0}\frac{f(\Retr_{\bX}(\bS+t\bDelta))-f(\Retr_{\bX}(\bS))}{t}.
\]
Note that when $\bDelta\in T(\bX)$, then $[\bDelta]_{\bU_{\bX}^\perp,\bV_{\bX}^\perp}=0$. Write 
\[
\bDelta=\begin{pmatrix}[\bDelta]_{\bU_{\bX},\bV_{\bX}}&[\bDelta]_{\bU_{\bX},\bV_{\bX}^\perp}\\
[\bDelta]_{\bU_{\bX}^\perp,\bV_{\bX}}&0
\end{pmatrix},
\]
then 
\begin{equation}
\Retr_{\bX}(\bDelta)=\begin{pmatrix}\bX+[\bDelta]_{\bU_{\bX},\bV_{\bX}}&[\bDelta]_{\bU_{\bX},\bV_{\bX}^\perp}\\
[\bDelta]_{\bU_{\bX}^\perp,\bV_{\bX}}&[\bDelta]_{\bU_{\bX}^\perp,\bV_{\bX}} \Big(\bX+[\bDelta]_{\bU_{\bX},\bV_{\bX}}\Big)^{-1}[\bDelta]_{\bU_{\bX},\bV_{\bX}^\perp}
\end{pmatrix}.\label{eq:retr}
\end{equation}
As a result,
\[
f(\Retr_{\bX}(\bS+t\bDelta))-f(\Retr_{\bX}(\bS))=\Big\langle\grad f(\Retr_{\bX}(\bS)),\Retr_{\bX}(\bS+t\bDelta)-\Retr_{\bX}(\bS)\Big\rangle +O(t^2),
\]
where 
\[
\Retr_{\bX}(\bS+t\bDelta)-\Retr_{\bX}(\bS)=t\begin{pmatrix}[\bDelta]_{\bU_{\bX},\bV_{\bX}}&[\bDelta]_{\bU_{\bX},\bV_{\bX}^\perp}\\
[\bDelta]_{\bU_{\bX}^\perp,\bV_{\bX}}&\bH(\bS,\bDelta)
\end{pmatrix}+O(t^2)
\]
where
\begin{align*}
\bH(\bS,\bDelta)
=&[\bDelta]_{\bU_{\bX}^\perp,\bV_{\bX}}\Big(\bX+[\bS]_{\bU_{\bX},\bV_{\bX}}\Big)^{-1}[\bS]_{\bU_{\bX},\bV_{\bX}^\perp}+[\bS]_{\bU_{\bX}^\perp,\bV_{\bX}} \Big(\bX+[\bS]_{\bU_{\bX},\bV_{\bX}}\Big)^{-1}[\bDelta]_{\bU_{\bX},\bV_{\bX}^\perp}\\
&-[\bS]_{\bU_{\bX}^\perp,\bV_{\bX}} \Big(\bX+[\bS]_{\bU_{\bX},\bV_{\bX}}\Big)^{-1}[\bDelta]_{\bU_{\bX},\bV_{\bX}}\Big(\bX+[\bS]_{\bU_{\bX},\bV_{\bX}}\Big)^{-1}[\bS]_{\bU_{\bX},\bV_{\bX}^\perp}.
\end{align*}
So we have
\[
\grad_{\bDelta}{f}_{\bX}(\bS)=\langle \grad f(\Retr_{\bX}(\bS)), P_{T(\bX)}\bDelta\rangle + \langle[\grad f(\Retr_{\bX}(\bS))]_{\bU_{\bX},\bV_{\bX}}, \bH(\bS,\bDelta)\rangle. 
\]
and
\begin{align}\label{eq:deltas}
&\Big|\grad_{\bDelta}{f}_{\bX}(\bS)-\grad_{\bDelta}{f}_{\bX}(\bS')\Big|\\\nonumber
\leq &\Big|\Big\langle \grad f(\Retr_{\bX}(\bS))-\grad f(\Retr_{\bX}(\bS')), P_{T(\bX)}\bDelta\Big\rangle\Big|\\\nonumber+&\Big|\Big\langle[\grad f(\Retr_{\bX}(\bS))]_{\bU_{\bX},\bV_{\bX}}, \bH(\bS,\bDelta)-\bH(\bS',\bDelta)\Big\rangle\Big|\\+&\Big|\Big\langle[\grad f(\Retr_{\bX}(\bS))-\grad f(\Retr_{\bX}(\bS'))]_{\bU_{\bX},\bV_{\bX}}, \bH(\bS',\bDelta)\Big\rangle\Big|\nonumber
\end{align}

Since $\sigma_{\min}(\bX)>2\epsilon_T$ and $\|\bS\|, \|\bS'\|<\epsilon_T$, we have
\begin{align}\label{eq:auxiliary_49}
 \|\bH(\bS,\bDelta)\|_F&< 3\|\bDelta\|_F\|\bS\|_F/\epsilon_T,\\ 
 \|\bH(\bS,\bDelta)-\bH(\bS',\bDelta)\|_F&\leq 2\|\bDelta\|_F\|\bS-\bS'\|_F/\epsilon_T\label{eq:auxiliary_50}\\
|\bH(\bS,\bDelta)-\bH(\bS',\bDelta)-\langle \grad_{\bS}\bH(\bS,\bDelta)|_{\bS=0}, \bS-\bS'\rangle|&\leq 2\|\bDelta\|_F\|\bS-\bS'\|_F\max(\|\bS\|_F,\|\bS'\|_F)/\epsilon_T^2\label{eq:auxiliary_51}\\
\|\Retr_{\bX}(\bS)-\Retr_{\bX}(\bS')\|&\leq \|\bS-\bS'\|_F(1+2/\epsilon_T)\label{eq:auxiliary_52}
\end{align}
The proof of  \eqref{eq:retraction_deri1} follows from treating the three components in the RHS of \eqref{eq:deltas} separately and applies \eqref{eq:auxiliary_49}-\eqref{eq:auxiliary_52},  and $\sigma_{\min}(\bX+[\bS]_{\bU_{\bX},\bV_{\bX}})\geq \epsilon_T$:
\[
\Big|\grad_{\bDelta}{f}_{\bX}(\bS)-\grad_{\bDelta}{f}_{\bX}(\bS')\Big|\leq 4L\|\bDelta\|_F\|\bS-\bS'\|_F(1+2/\epsilon_T)+2M\|\bDelta\|_F\|\bS-\bS'\|_F/\epsilon_T.
\]

The proof of \eqref{eq:auxiliary_49} follows from the definition of $\bH$ and $\sigma_{\min}(\bX+[\bS]_{\bU_{\bX},\bV_{\bX}})\geq \epsilon_T$.

The proof of \eqref{eq:auxiliary_50} follows from 
\[
(\bX+\bS)^{-1}\bS-(\bX+\bS')^{-1}\bS'= \big((\bX+\bS)^{-1}-(\bX+\bS')^{-1}\big)\bS+(\bX+\bS')^{-1}(\bS-\bS'),
\]
and
\begin{equation}\label{eq:auxiliary_50_1}
(\bX+\bS)^{-1}-(\bX+\bS')^{-1}=(\bX+\bS)^{-1}(\bS-\bS')(\bX+\bS')^{-1}.
\end{equation}

The proof of \eqref{eq:auxiliary_51} follows from \eqref{eq:auxiliary_50_1},
\[
(\bX+\bS)^{-1}\bS-(\bX+\bS')^{-1}\bS'-\bX^{-1}(\bS-\bS')=\big((\bX+\bS)^{-1}-(\bX+\bS')^{-1}\big)\bS+\big((\bX+\bS')^{-1}-\bX^{-1}\big)(\bS-\bS').
\]

The proof of \eqref{eq:auxiliary_52} follows from \eqref{eq:auxiliary_50} and \[
\|\Retr_{\bX}(\bS)-\Retr_{\bX}(\bS')-(\bS-\bS')\|_F\leq \|\bH(\bS,\bS-\bS')-\bH(\bS',\bS-\bS')\|_F+??.
\]

To prove \eqref{eq:retraction_deri2}, we apply the decomposition in \eqref{eq:deltas} with modifications as follows

\begin{align}\label{eq:deltass}
&\Big|\grad_{\bDelta}{f}_{\bX}(\bS)-\grad_{\bDelta}{f}_{\bX}(\bS')-\grad^2_{\bDelta}{f}_{\bX}(0) (\bS-\bS')\Big|\\
\leq &\Big|\Big\langle \grad f(\Retr_{\bX}(\bS))-\grad f(\Retr_{\bX}(\bS'))-\grad f^2(\Retr_{\bX}(0)), P_{T(\bX)}\bDelta\Big\rangle\Big|\\+&\Big|\Big\langle[\grad f(\Retr_{\bX}(\bS))]_{\bU_{\bX},\bV_{\bX}}, \bH(\bS,\bDelta)-\bH(\bS',\bDelta)-\grad_{\bS}\bH(\bS,\bDelta)|_{\bS=0}(\bS-\bS')\Big\rangle\Big|\\+&\Big|\Big\langle[\grad f(\Retr_{\bX}(\bS))-\grad f(\Retr_{\bX}(\bS'))-\grad f^2(\Retr_{\bX}(0))]_{\bU_{\bX},\bV_{\bX}}, \bH(\bS',\bDelta)\Big\rangle\Big|\end{align} 

and estimate the three parts separately. For the first component in the RHS of \eqref{eq:deltass}, we note that
\begin{align*}
&\|\grad f(\Retr_{\bX}(\bS))-\grad f(\Retr_{\bX}(\bS')) - \grad^2f(\Retr_{\bX}(\bS)) (\Retr_{\bX}(\bS)-\Retr_{\bX}(\bS'))\|_F\\\leq& \rho\|\Retr_{\bX}(\bS)-\Retr_{\bX}(\bS')\|_F^2,
\end{align*}
with \eqref{eq:auxiliary_52}, we have
\begin{align*}
&\|\grad f(\Retr_{\bX}(\bS))-\grad f(\Retr_{\bX}(\bS')) - \grad^2f(\Retr_{\bX}(0)) (\Retr_{\bX}(\bS)-\Retr_{\bX}(\bS'))\|_F\\
\leq &\|\grad^2f(\Retr_{\bX}(\bS)) -\grad^2f(\Retr_{\bX}(0))\|_F \|\Retr_{\bX}(\bS)-\Retr_{\bX}(\bS')\|_F+\rho\|\Retr_{\bX}(\bS)-\Retr_{\bX}(\bS')\|_F^2\\
\leq &\rho\|\Retr_{\bX}(\bS)-\bX\|_F\|\Retr_{\bX}(\bS)-\Retr_{\bX}(\bS')\|_F+\rho\|\Retr_{\bX}(\bS)-\Retr_{\bX}(\bS')\|_F^2\\
\leq &\rho (\|\bS-\bS'\|_F^2+\|\bS-\bS'\|_F\|\bS\|_F)(1+2/\epsilon_T)^2< 3\rho\|\bS-\bS'\|_F\max(\|\bS\|_F,\|\bS'\|_F)(1+2/\epsilon_T)^2.
\end{align*}
Then the first component in the RHS of \eqref{eq:deltass} is bounded above by
\[
3\rho\|\bS-\bS'\|_F\max(\|\bS\|_F,\|\bS'\|_F)(1+2/\epsilon_T)^2\|\bDelta\|.
\]
The third component in the RHS of \eqref{eq:deltass} is proved similarly with an additional application of \eqref{eq:auxiliary_49}, which shows that it is bounded by
\[
9\rho\|\bS-\bS'\|_F\max(\|\bS\|_F,\|\bS'\|_F)(1+2/\epsilon_T)^2\|\bDelta\|.
\]
For the second component in the RHS of \eqref{eq:deltass}, we apply \eqref{eq:auxiliary_52} and obtain that it is bounded by
\[
2\|\bDelta\|_F\|\bS-\bS'\|_F\max(\|\bS\|_F,\|\bS'\|_F)\|\grad f(\Retr_{\bX}(\bS))\|_F/\epsilon_T^2.
\]
In summary,
\begin{align}
&\Big|\grad_{\bDelta}{f}_{\bX}(\bS)-\grad_{\bDelta}{f}_{\bX}(\bS')-\grad^2_{\bDelta}{f}_{\bX}(0) (\bS-\bS')\Big|\\
\leq &\|\bS-\bS'\|_F\max(\|\bS\|_F,\|\bS'\|_F)\|\bDelta\|_F\Big(12\rho(1+2/\epsilon_T)^2+2M/\epsilon_T^2\Big).
\end{align}

(b) The proof is obtained by using $\bDelta$ such that $[\bDelta]_{\bU_{\bX},\bV_{\bX}^T}=\bu_1\bv_2^T$ and $[\bDelta]_{\bU_{\bX}^T,\bV_{\bX}}=-\bu_2\bv_1^T$, where $\bu_1$ and $\bv_1$ are the top left and right singular vectors of $\|[\grad f(\bX)]_{\bU_{\bX},\bV_{\bX}}\|$ respectively, and $\bu_2$ and $\bv_2$ are the top left and right singular vectors of $\|[\grad f(\bX)]_{\bU_{\bX}^\perp,\bV_{\bX}^\perp}\|$ respectively. Then
\[
\Retr_{\bX}(t\bDelta)=\bX+t\bu_1\bv_2^T-t\bu_2\bv_1^T-t^2\bu_2\bv_2^T/\sigma_r(\bX), \Retr_{\bX}(-t\bDelta)=\bX-t\bu_1\bv_2^T+t\bu_2\bv_1^T-t^2\bu_2\bv_2^T/\sigma_r(\bX).
\] 
It then follows that 
\begin{align*}
&\frac{1}{t^2}(f(\Retr_{\bX}(t\bDelta))+f(\Retr_{\bX}(-t\bDelta))-f(\bX))\\\leq& -2\frac{\|[\grad f(\bX)]_{\bU_{\bX},\bV_{\bX}}\|}{\sigma_r(\bX)}+2 \frac{L}{2}\|\bu_1\bv_2^T-\bu_2\bv_1^T\|_F^2=-2\frac{\|[\grad f(\bX)]_{\bU_{\bX},\bV_{\bX}}\|}{\sigma_r(\bX)}+2L.
\end{align*}
As a result,
$\sigma_{\min}(\grad^2\hat{f}_{\bX}(0))\leq -\frac{\|[\grad f(\bX)]_{\bU_{\bX},\bV_{\bX}}\|}{\sigma_r(\bX)}+L$.
\end{proof}

\begin{proof}[Proof of Lemma~\ref{lemma:alg_stop}]
It follows from Lemma~\ref{lemma:bZ0} that \[
\|[\grad f(\bX)]_{\bU_{\bX}^\perp,\bV_{\bX}^\perp}\|\leq \frac{4}{3}\epsilon+\frac{8}{3}\epsilon_T/\eta, \,\,\,\text{and}\,\,\,\|\grad f(\bX)-[\grad f(\bX)]_{\bU_{\bX}^\perp,\bV_{\bX}^\perp}\|_F\leq \epsilon.
\]
Combining them, we proved Lemma~\ref{lemma:alg_stop}:
\[
\|\grad f(\bX)\|\leq \|[\grad f(\bX)]_{\bU_{\bX}^\perp,\bV_{\bX}^\perp}\|+\|\grad f(\bX)-[\grad f(\bX)]_{\bU_{\bX}^\perp,\bV_{\bX}^\perp}\|<\frac{8}{3}(\epsilon+\epsilon_T/\eta). 
\]
\end{proof}



 \bibliography{citations}

\end{document}